\def\rr{{\mathbb R}}
\def\R{{\mathbb R}}
\def\rn{{{\rr}^n}}
\def\zz{{\mathbb Z}}
\def\Z{{\mathbb Z}}
\def\nn{{\mathbb N}}
\def\N{{\mathbb N}}
\def\fz{\infty}
\def\ccc{{\mathbb C}}
\def\cs{{\mathcal S}}
\def\cp{{\mathcal P}}
\def\az{\alpha}
\def\supp{{\rm{\,supp\,}}}
\def\loc{{\rm{\,loc\,}}}
\def\ls{\lesssim}
\def\lz{\lambda}
\def\vz{\varphi}
\def\wz{\widetilde}
\def\hs{\hspace{0.3cm}}
\def\r{\right}
\def\lf{\left}
\def\bint{{\ifinner\rlap{\bf\kern.30em--}
\int\else\rlap{\bf\kern.35em--}\int\fi}\ignorespaces}
\def\sbint{{\ifinner\rlap{\bf\kern.32em--}
\hspace{0.078cm}\int\else\rlap{\bf\kern.45em--}\int\fi}\ignorespaces}
\newtheorem{theorem}{Theorem}[section]
\newtheorem{lemma}[theorem]{Lemma}
\newtheorem{proposition}[theorem]{Proposition}
\theoremstyle{definition}
\newtheorem{example}[theorem]{Example}
\newtheorem{remark}[theorem]{Remark}
\newtheorem{definition}[theorem]{Definition}
\numberwithin{equation}{section}
\numberwithin{equation}{section}
\numberwithin{equation}{section}
\begin{document}

\arraycolsep=1pt

\title{\Large\bf Variable Anisotropic Singular Integral Operators\footnotetext{\hspace{-0.35cm} {\it 2010
Mathematics Subject Classification}. {42B35, 42B30, 46A32, 32A55.}
\endgraf{\it Key words and phrases.} anisotropic Hardy space, continuous ellipsoid cover,  maximal function, singular integral operator.
\endgraf  Marcin Bownik was partially supported by NSF grants DMS-1665056 and DMS-1956395. Baode Li was supported by the National Natural Science Foundation of China (Grant Nos. 11861062, 11661075 \& 11561065).
A majority of the work was done when Baode Li and Jinxia Li were visiting University of Oregon in the academic year 2018--19.
\endgraf $^\ast$\,Corresponding author.
}}
\author{Marcin Bownik, Baode Li, and Jinxia Li$^\ast$}
\date{ }
\maketitle

\vspace{-0.8cm}

\begin{center}
\begin{minipage}{13cm}\small
{\noindent{\bf Abstract.}
We introduce the class of variable anisotropic singular integral operators associated to a continuous multi-level ellipsoid cover $\Theta$ of $\mathbb{R}^n$ introduced by Dahmen, Dekel, and Petrushev \cite{ddp}. This is an extension of the classical isotropic singular integral operators on $\R^n$ of arbitrary smoothness and their anisotropic analogues for general expansive matrices introduced by the first author \cite{b}. We establish the boundedness of variable anisotropic singular integral operators $T$ on the Hardy spaces with pointwise variable anisotropy $H^p(\Theta)$, which were developed by Dekel, Petrushev, and Weissblat \cite{dpw}. In contrast with the general theory of Hardy spaces on spaces of homogenous type, our results work in the full range $0<p\leq 1$.}
\end{minipage}
\end{center}


\section{Introduction\label{s1}}

Calder\'on-Zygmund operators play an important role in harmonic analysis on $\R^n$  and are the central object of study. They are bounded not only on Lebesgue $L^p(\rn)$ spaces for $1<p<\infty$, but also on its natural extension for $0<p\le 1$, the Hardy $H^p(\rn)$ spaces. While Hardy spaces were initially defined in the complex variable setting, they were extended to real-variable setting in the celebrated works of Stein and Weiss \cite{sw} and Fefferman and Stein \cite{fs72}. Since then, Hardy spaces have been studied in different settings and domains. Among the most general setting, where Hardy spaces are studied, are certain metric measure spaces known as spaces as homogeneous type, which were introduced by Coifman and Weiss \cite{cw71, cw77}. However, due to lack of higher order smoothness and vanishing moments, such spaces can be meaningfully defined only when $p$ is close to 1, see \cite{am, hs, hhllyy}.

In this paper we are interested in developing results for Hardy spaces on $\R^n$ which hold for the entire range of $0<p\le 1$. This includes classical isotropic Hardy spaces of Fefferman and Stein \cite{fs72}, parabolic Hardy spaces of Calder\'on and Torchinsky \cite{ct75, ct77}, and anisotropic Hardy space associated with expansive matrices  \cite{b} which were also studied in \cite{bb,bd,blyz,h,w,zl}.
However, the most general class of (unweighted) Hardy spaces on $\R^n$ defined for the entire range of $0<p\le 1$ are spaces with pointwise variable anisotropy developed by Dekel, Petrushev, and Weissblat \cite{dpw}. More precisely, they correspond to the largest class of spaces of homogeneous type on $\R^n$ equipped with Lebesgue measure for which Hardy spaces were developed for all $0<p\le 1$. Roughly speaking they match with quasi-distances on $\R^n$ for which balls are equivalent to ellipsoids. More precisely, these spaces are determined by continuous ellipsoid covers of $\R^n$, which were introduced and studied by Dahmen, Dekel, and Petrushev \cite{ddp}.

Several results for spaces with pointwise variable anisotropy were shown including grand maximal function characterization, atomic decomposition, and classification of Hardy spaces \cite{dpw}, the duality of Hardy spaces \cite{dw}, and molecular decomposition \cite{abr}. These results are generalizations of well-known results for Hardy spaces in the classical isotropic setting of $\R^n$, parabolic setting, and anisotropic setting. However, an important missing ingredient in the setting of continuous ellipsoid covers is a satisfactory definition of Calder\'on-Zygmund operators of arbitrary smoothness.
In this paper we show this remaining link by providing the definition of Calder\'on-Zygmund operators which is the extension of the class of operators in the anisotropic setting \cite{b} and at the same type in the setting of spaces of homogeneous type \cite{cw71}.

The theory of singular integral operators plays an important role in harmonic analysis and partial differential equations; see, for example, \cite{g,g1,s}. In the classical isotropic setting of $\R^n$ we consider Calder\'on-Zygmund operators $T$ with regularity $s$ of the form
\[
Tf(x) = \int_{\R^n} K(x,y) f(y), \qquad x \not\in \supp f,\  f\in C^\infty_c(\rn),
\]
whose kernel $K(x,y)$ satisfies the bound
\begin{equation}\label{rd}
|\partial^\alpha_y  K(x,y) | \le C |x-y|^{-n-|\alpha|}
\qquad\text{for all }x\ne y \text{ and multi-indices } |\alpha|\le s.
\end{equation}
It is well-known that operators $T$ are bounded on isotropic Hardy spaces $H^p(\R^n)$ provided that $s>n(1/p-1)$ and $T$ preserves vanishing moments $T^*(x^\alpha)=0$ for $|\alpha|<s$, see  \cite[Proposition 7.4.4]{mc97}, \cite[Theorem III.4]{s}. The first author \cite{b} introduced anisotropic Calder\'{o}n-Zygmund operators associated with expansive dilations and has shown their boundedness on anisotropic Hardy spaces, where the anisotropy is fixed and global on $\rn$. An extension of these results to product anisotropic Hardy spaces was done in \cite{lbyz}. In the context of Hardy spaces with pointwise variable anisotropy \cite{dpw} we introduce the following class of singular integral operators adapted to variable anisotropy depending on a point $x$ in $\rn$ and a scale $t$ in $\mathbb{R}$.

Suppose that $\Theta$ is a continuous ellipsoid cover consisting of ellipsoids $\theta_{x,t}$ with center $x\in\R^n$ and scale $t\in \R$ of the form $\theta_{x,t}=M_{x,t}(\mathbb B^n)+x$, where $M_{x,t}$ is an invertible matrix and $\mathbb B^n$ is the unit ball in $\R^n$, see Definition \ref{d2.1}. An ellipsoid cover $\Theta$ defines a spaces of homogeneous type \cite{ddp} with quasi-distance $\rho_\Theta$ defined as infimum of ellipsoid volumes
\[
\rho_\Theta(x,y):=\inf_{\theta\in\Theta}\lf\{|\theta|:x,y\in\theta\r\}.
\]
An anisotropic analogue of the bound \eqref{rd} takes the form
\begin{equation}\label{rda}
\lf|\partial^\alpha_y[K(\cdot,M_{y,\,m}\cdot)](x,M^{-1}_{y,\,m}y)\r|\le C/\rho_\Theta(x,y) \qquad\text{for all }x\ne y \text{ and multi-indices } |\alpha|\le s,
\end{equation}
where  $m=  -  \log_{2} \rho_\Theta(x,y)$. It can be shown that \eqref{rda} is a generalization of \eqref{rd} when $\Theta$ is the isotropic cover consists of balls $\theta_{x,t}=2^{-t}\mathbb B^n+x$. Moreover, \eqref{rda} with regularity $s=1$ implies the well-known estimates in the setting of spaces of homogeneous type
\begin{align*}
|K(x,y)| &\le C/\rho_\Theta(x,y),
\\
|K(x,y)-K(x,y')| &\le C \frac{[\rho_\Theta(y,y')]^\delta}{[\rho_\Theta(x,y)]^{1+\delta}}
\qquad\text{if \ } \rho_\Theta(y,y') \le \frac{1}{2\kappa} \rho_\Theta(x,y),
\end{align*}
where $\kappa$ is the triangle inequality constant of $\rho_\Theta$.
The main result of this paper shows the boundedness of variable anisotropic singular integral operators $T$ from $H^p(\Theta)$ to itself and from $H^p(\Theta)$ to Lebesgue spaces $L^p(\rn)$ for the entire range of $0< p\le 1$, provided regularity and vanishing moments of $T$ are met analogous to the isotropic case. This generalizes classical results for isotropic Hardy spaces of Fefferman-Stein \cite{fs72, mc97} and anisotropic Hardy spaces \cite{b}.

The proof of the main theorem is conceptually simple, but technically challenging. The central idea is to show that $T$ maps atoms into uniformly bounded functions in $H^p(\Theta)$ known as molecules. Then, the atomic decomposition of $H^p(\Theta)$ yields the boundedness of $T$. The main technical problem with this argument is that boundedness on atoms does not necessarily imply boundedness of $T$  unless equivalence of finite and infinite atomic decompositions is shown \cite{b2, msv}. Instead,  motivated by the paper of Huang, Liu, Yang, and Yuan \cite{hlyy}, we improve the Calder\'on-Zygmund decomposition of $H^p(\Theta)$ by showing that atomic decomposition of any $f\in H^p(\Theta)\cap L^q(\rn)$ also converges in $L^q(\rn)$ norm for $1<q<\infty$.


This paper is organized as follows.
In Section \ref{s2}, we first recall notation, definitions, and properties of  continuous ellipsoid cover $\Theta$ and quasi-distance $\rho_\Theta$ that are used throughout the paper. In Section \ref{s3} we define the Hardy space $H^p(\Theta)$ by means of the radial grand maximal function and  the nontangential grand maximal function with arbitrary aperture and recall its characterization by atomic decompositions.  In the next section we show technical improvements in the Calder\'on-Zygmund decomposition and the atomic decomposition of variable anisotropic Hardy space $H^p(\Theta)$, which were originally established by Dekel, Petrushev, and Weissblat \cite{dpw}.
Section \ref{s5} is devoted to variable anisotropic singular integral operators (VASIOs). We show that VASIOs are indeed an extension of the classical isotropic singular integral operators on $\R^n$ of arbitrary smoothness and their anisotropic analogues for general expansive matrices. Finally, in Section \ref{s6} we prove main theorems by showing that $T$ is bounded from $H^p(\Theta)$ to $L^p(\rn)$ and bounded from $H^p(\Theta)$ to itself.

Finally, we make some conventions on notation.
Let $\nn:=\{1,\, 2,\,\ldots\}$ and $\nn_0:=\{0\}\cup\nn$.
For any $\az:=(\az_1,\ldots,\az_n)\in\nn_0^n$,
$|\az|:=\az_1+\cdots+\az_n$ and $\partial^\az:=
(\frac{\partial}{\partial x_1})^{\az_1}\cdots
(\frac{\partial}{\partial x_n})^{\az_n}$.
Throughout the whole paper, we denote by $C$ a positive
constant which is independent of the main parameters, but it may vary from line to line. The symbol $D\ls F$ means that $D\le CF$. If $D\ls F$ and $F\ls D$, we then write $D\sim F$. For any sets $E,\,F \subset \rn$, we use $E^\complement$ to denote the set $\rn\setminus E$. Let $\cs$ be the space of Schwartz functions, $\cs'$ the space of tempered distributions, and $C^N$ the space of  continuously differentiable functions of order $N$.




\section{Anisotropic Continuous Ellipsoid Covers of $\rn$ \label{s2}}

In this section we recall the properties of continuous ellipsoid covers which were originally introduced by Dahmen, Dekel, and Petrushev \cite{ddp}. An {\it ellipsoid} $\xi$ in $\rn$ is an image of the Euclidean unit ball $\mathbb{B}^n:=\{x\in\rn: |x|<1\}$ under an affine transform, i.e.,
$$\xi:=M_\xi(\mathbb{B}^n)+c_\xi,$$
where $M_\xi$ is an invertible matrix and $c_\xi$ is the center. For any ellipsoid $\xi$ and $\lambda>0$, define a dilated ellipsoid $\lambda\xi$ by $$\lambda\xi:=\lambda M_\xi(\mathbb{B}^n)+c_\xi.$$

We begin with the definition of continuous ellipsoid covers, which was introduced in  \cite[Definition 2.4]{ddp}.

\begin{definition}\label{d2.1}
We say that
$$\Theta:=\{ \theta_{x,\,t}: x\in\rn,t\in\mathbb{R}\}$$
is a {\it continuous ellipsoid cover} of $\rn$, or shortly a cover,
if there exist constants $\mathbf{p}(\Theta):=\{a_1,\ldots, a_6\}$ such that:
\begin{itemize}
\item[(i)]
For every $x\in \rn$ and $t\in \mathbb{R}$, there exists an ellipsoid $\theta_{x,\,t}:=M_{x,\,t}(\mathbb{B}^n)+x$, where $M_{x,\,t}$ is an invertible matrix and $x$ is the center, satisfying
\begin{equation}\label{e2.1}
a_12^{-t}\leq|\theta_{x,\,t}|\leq a_2 2^{-t}.
\end{equation}
\item[(ii)]
Intersecting ellipsoids from $\Theta$ satisfy ``shape condition'', i.e., for any $x,\,y\in \rn$, $t\in \mathbb{R}$ and $s\ge0$, if $\theta_{x,\,t}\cap \theta_{y,\,t+s}\ne\emptyset $, then

\begin{equation}\label{e2.2}
a_3 2^{-a_4 s}\le 1 / \| (M_{y,\,t+s})^{-1} M_{x,\,t}\|
\le \|(M_{x,\,t})^{-1} M_{y,\,t+s}\|
\le a_5 2^{-a_6 s}.
\end{equation}
Here, $\|\cdot \|$ is the matrix norm of $M$ given by $\|M\|:=\max_{|x|=1}|Mx|$.
\end{itemize}
\end{definition}

The word {\it continuous} refers to the fact that ellipsoids $\theta_{x,t}$ are defined for all values of $x\in \R^n$ and $t\in \R$. In contrast, a {\it discrete ellipsoid cover} is indexed over integer scales $t\in \Z$ with discrete choice of centers $x\in \mathcal D_t \subset \R^n$, which satisfy some additional conditions, see \cite[Definition 2.1]{ddp}, such as $\bigcup_{x\in \mathcal D_t} \theta_{x,t} =\R^n$.

It is worth adding that for our purposes it is not necessary to assume any measurability or continuity condition on a continuous ellipsoid cover $\Theta$. We say that an ellipsoid cover
$\Theta$ is {\it pointwise continuous} if for every $t\in \R$, the matrix valued function $x \mapsto M_{x,t}$ is continuous. That is,
\begin{equation}\label{e2.0}
\| M_{x',t}-M_{x,t}\|\rightarrow 0 \ \ {\rm as}\ \ x'\rightarrow x.
\end{equation}
The condition \eqref{e2.0} is implicitly used in \cite{dpw} to guarantee that the superlevel set $\Omega$  corresponding to the grand maximal function, which is given by \eqref{omega}, is open. However, as we will see this assumption is not necessary since it is always possible to construct an equivalent ellipsoid cover
\[
\Xi:=\{ \xi_{x,\,t}: x\in\rn,t\in\mathbb{R}\}
\]
such that $\Xi$ is pointwise continuous and $\Xi$ is equivalent to $\Theta$. We say that two ellipsoid covers $\Theta$ and $\Xi$ are {\it equivalent} if there exists a constant $C>0$ such that for any  $x\in\rn$ and $t\in\mathbb {R}$, we have
\begin{equation}\label{e2.x7}
\frac1{C} \xi_{x,t} \subset \theta_{x,t} \subset C \xi_{x,t}.
\end{equation}

\begin{theorem}\label{t2.1}
Given an ellipsoid cover $\Theta$, there exists an equivalent ellipsoid cover $\Xi$, which is pointwise continuous.
\end{theorem}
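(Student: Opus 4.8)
The plan is to construct $\Xi$ scale by scale. Fix $t\in\R$; I would produce an invertible matrix-valued function $x\mapsto N_{x,t}$ that is continuous on $\rn$ and such that the ellipsoid $\xi_{x,t}:=N_{x,t}(\mathbb B^n)+x$ satisfies $\frac1C\xi_{x,t}\subset\theta_{x,t}\subset C\xi_{x,t}$ with $C$ depending only on $\mathbf{p}(\Theta)$. Granting this, \eqref{e2.x7} holds, so $\Xi$ is equivalent to $\Theta$; the remaining cover axioms are then inherited: \eqref{e2.1} for $\xi_{x,t}$ is immediate from the displayed inclusions together with \eqref{e2.1} for $\theta_{x,t}$, while the shape condition \eqref{e2.2} for $\Xi$ follows from that for $\Theta$ combined with the standard estimates for dilated ellipsoids in a cover (cf.\ \cite{ddp}), since $\xi_{x,t}\cap\xi_{y,t+s}\ne\emptyset$ forces $C\theta_{x,t}\cap C\theta_{y,t+s}\ne\emptyset$. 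So the whole matter reduces to the continuous construction of $N_{x,t}$.

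The idea I would use is to average the \emph{quadratic forms} attached to the ellipsoids of $\Theta$ near $x$ at scale $t$, exploiting that the cone of positive definite matrices is convex and that $A\mapsto A^{1/2}$ is continuous on it. By a Vitali-type selection (as in the passage from continuous to discrete covers in \cite{ddp}) I would pick $\{x_j\}_j\subset\rn$ so that $\{\theta_{x_j,t}\}_j$ covers $\rn$ while $\{\frac12\theta_{x_j,t}\}_j$ are pairwise disjoint; then, with a fixed $C'\ge 2$, the family $\{C'\theta_{x_j,t}\}_j$ is a locally finite cover of $\rn$ (local finiteness/bounded overlap being a standard property of covers). Set
\[
\psi_j(x):=\max\bigl\{0,\,1-|M_{x_j,t}^{-1}(x-x_j)|/C'\bigr\},\qquad \varphi_j(x):=\frac{\psi_j(x)}{\sum_k\psi_k(x)},
\]
and then
\[
A_{x,t}:=\sum_j\varphi_j(x)\,M_{x_j,t}M_{x_j,t}^{T},\qquad N_{x,t}:=A_{x,t}^{1/2},\qquad \xi_{x,t}:=N_{x,t}(\mathbb B^n)+x.
\]
Since each $M_{x_j,t}$ is a \emph{fixed} matrix, every $\psi_j$ is continuous in $x$; the denominator is a locally finite sum that is strictly positive everywhere (if $x\in\theta_{x_{j_0},t}$ then $\psi_{j_0}(x)>0$), so $\{\varphi_j\}$ is a continuous partition of unity with $\supp\varphi_j\subset\overline{C'\theta_{x_j,t}}$. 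Consequently $A_{x,t}$ is continuous in $x$ (locally a finite sum of continuous terms) and positive definite (a convex combination of positive definite matrices), so $N_{x,t}$ is invertible and $x\mapsto N_{x,t}=A_{x,t}^{1/2}$ is continuous by continuity of the matrix square root — which is precisely \eqref{e2.0} for $\Xi$.

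To verify the equivalence, write $A_x:=M_{x,t}M_{x,t}^{T}$, so $\theta_{x,t}-x=A_x^{1/2}(\mathbb B^n)$ and $\xi_{x,t}-x=A_{x,t}^{1/2}(\mathbb B^n)$. If $\varphi_j(x)>0$ then $x\in\overline{C'\theta_{x_j,t}}\cap\theta_{x,t}$, so these ellipsoids meet, and the dilated-ellipsoid shape estimates give $\|M_{x,t}^{-1}M_{x_j,t}\|,\ \|M_{x_j,t}^{-1}M_{x,t}\|\le C$ with $C=C(\mathbf{p}(\Theta),C')$; this is equivalent to the Loewner sandwich $C^{-2}A_x\le M_{x_j,t}M_{x_j,t}^{T}\le C^2A_x$. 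Averaging over $j$ (with $\sum_j\varphi_j(x)=1$) preserves the sandwich, giving $C^{-2}A_x\le A_{x,t}\le C^2A_x$; since $P\le Q$ for positive definite $P,Q$ is equivalent to $P^{1/2}(\mathbb B^n)\subset Q^{1/2}(\mathbb B^n)$, this yields $\frac1C\xi_{x,t}\subset\theta_{x,t}\subset C\xi_{x,t}$, with $C$ independent of $x$ and $t$.

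The hard part is the continuity itself: since $\Theta$ is assumed to be neither continuous nor even measurable, one cannot mollify $x\mapsto M_{x,t}$ directly. The construction gets around this by building the bump functions $\psi_j$ from the \emph{fixed} reference matrices $M_{x_j,t}$ (so they are continuous for free) and by averaging the quadratic forms $M_{x_j,t}M_{x_j,t}^{T}$ rather than the matrices themselves, which keeps positive-definiteness and hence — through the continuous square root — invertibility. Once the dilated-ellipsoid estimates of \cite{ddp} are in hand, checking that $\Xi$ inherits the cover axioms is routine.
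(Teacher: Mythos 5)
Your proposal is correct and rests on the same core mechanism as the paper's proof: fix the scale $t$, average positive definite quadratic forms attached to nearby same-scale ellipsoids of $\Theta$, use convexity of the positive definite cone and continuity of the matrix square root to get continuity in $x$, and use the shape condition through a Loewner sandwich (Lemma \ref{gal}) to get equivalence. The implementation, however, is genuinely different. The paper first constructs a bounded continuous radius function with $B(x,r(x))\subset\theta_{x,t}$ (Lemma \ref{l2.x2}), picks a measurable partition subordinate to the balls $B(x_k,r(x_k))$, freezes $\wz M_{y,t}:=M_{x_k,t}$ there, and defines $N_{x,t}$ by the integral average \eqref{e2.x5} of the \emph{inverse squares}, so the sandwich is integrated directly in the form of Lemma \ref{gal}(ii); you instead take a Vitali-type selection of ellipsoids, a continuous partition of unity built from the fixed matrices $M_{x_j,t}$, and a discrete convex combination of $M_{x_j,t}M_{x_j,t}^{T}$. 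Your route dispenses with the auxiliary radius function and the measurable selection, and uses the correct identification $M(\mathbb B^n)=(MM^{T})^{1/2}(\mathbb B^n)$, but it shifts the real work into the covering/bounded-overlap properties of the dilates $C'\theta_{x_j,t}$, which you defer to \cite{ddp}; these do follow from the shape condition at $s=0$, Lemma \ref{l2.2}, and \eqref{e2.1} by the same volume counting as in \eqref{e2.x2}, with the caveat that the selection only makes the $C'$-dilates cover for $C'\gtrsim a_5$ (not $\{\theta_{x_j,t}\}$ itself), so $C'$ must be chosen accordingly — harmless, since only the dilates enter your bump functions. Also note that passing from $\|M_{x,t}^{-1}M_{x_j,t}\|,\|M_{x_j,t}^{-1}M_{x,t}\|\le C$ to the two-sided bound on $M_{x_j,t}M_{x_j,t}^{T}$ uses anti-monotonicity of matrix inversion, a standard fact the paper sidesteps by working with inverse squares throughout; and, like the paper, you leave the check that $\Xi$ itself satisfies Definition \ref{d2.1} to a short transfer argument via Lemma \ref{gal}.
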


To prove Theorem \ref{t2.1}, we need the following lemmas.

\begin{lemma}\label{gal}
Suppose $A$ and $B$ are $n\times n$ positive definite matrices. Let $c>0$ be a constant. The following are equivalent:
\begin{enumerate}[\rm (i)]
\item $||A^{-1} B || \le c$,
\item $A^{-2} \le c^2 B^{-2}$, where $\le$ denotes the partial order among hermitian matrices,
\item $B(\mathbb B^n) \subset  A(c\mathbb B^n)$.
\end{enumerate}
\end{lemma}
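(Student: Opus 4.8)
The plan is to prove the chain of equivalences by establishing (i)$\Leftrightarrow$(ii) first, which is pure linear algebra about positive definite matrices, and then (i)$\Leftrightarrow$(iii), which translates the operator norm bound into an inclusion of ellipsoids. Throughout, the key observation is that for a positive definite matrix $A$, the operator norm $\|A^{-1}B\|$ can be computed via $\|A^{-1}B\|^2 = \|A^{-1}BB^*A^{-1}\| = \|(A^{-1}B)(A^{-1}B)^*\|$, and since $A$ is symmetric this equals the largest eigenvalue of $A^{-1}B^2A^{-1}$ (here I am using that $A$, $B$ are symmetric so $B^* = B$). So the first thing I would do is record the identity $\|A^{-1}B\|^2 = \lambda_{\max}(A^{-1}B^2A^{-1})$.

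For (i)$\Leftrightarrow$(ii): the condition $\|A^{-1}B\|\le c$ is equivalent to $\lambda_{\max}(A^{-1}B^2A^{-1})\le c^2$, i.e. $A^{-1}B^2A^{-1}\le c^2 I$. Conjugating by the positive definite matrix $A$ (which preserves the partial order, since $X\le Y$ iff $PXP\le PYP$ for invertible symmetric $P$... more precisely, $X \le Y \iff SXS^* \le SYS^*$ for any invertible $S$) gives $B^2 \le c^2 A^2$. Now I use the operator monotonicity of the map $X\mapsto X^{-1}$ on positive definite matrices: $0 < B^2 \le c^2 A^2$ implies $(c^2A^2)^{-1} \le (B^2)^{-1}$, that is $A^{-2}\le c^2 B^{-2}$. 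Each of these steps is reversible, so this gives (i)$\Leftrightarrow$(ii). I should state clearly the two facts I am invoking: congruence invariance of the Löwner order, and operator antitonicity of inversion.

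For (i)$\Leftrightarrow$(iii): the inclusion $B(\mathbb B^n)\subset A(c\mathbb B^n) = cA(\mathbb B^n)$ holds iff for every $v$ with $|v|\le 1$ there is $w$ with $|w|\le 1$ and $Bv = cAw$, i.e. iff $|c^{-1}A^{-1}Bv|\le 1$ for all $|v|\le 1$, i.e. iff $\|c^{-1}A^{-1}B\|\le 1$, i.e. iff $\|A^{-1}B\|\le c$. This is essentially a restatement of the definition of operator norm together with the fact that $A$ is invertible, so it is short. I would present it as: $B(\mathbb B^n) \subset cA(\mathbb B^n) \iff A^{-1}B(\mathbb B^n)\subset c\mathbb B^n \iff \sup_{|v|\le 1}|A^{-1}Bv| \le c$.

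I do not expect a serious obstacle here — the lemma is a standard linear-algebra fact — but the one point requiring a little care is making sure the partial-order manipulations are stated correctly: specifically that congruence $X\mapsto SXS^*$ preserves $\le$ for invertible $S$ (used with $S = A = A^*$), and that inversion reverses $\le$ on positive definite matrices. I would cite or briefly justify these two facts (the latter follows, e.g., from the integral representation $X^{-1} = \int_0^\infty (X+t)^{-2}\,dt$ or from operator monotonicity of $X\mapsto -X^{-1}$), and then the rest is bookkeeping. One should also be slightly careful that "$A^{-2} \le c^2 B^{-2}$" in (ii) means $A^{-2}$ and $B^{-2}$ in the sense of squares of the (positive definite) inverses, which matches $(A^2)^{-1}$ since $A$ commutes with itself.
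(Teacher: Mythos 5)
Your proof is correct. The (i)$\Leftrightarrow$(iii) part is exactly the paper's argument: unwinding $\|A^{-1}B\|\le c$ as $A^{-1}B(\mathbb{B}^n)\subset c\,\mathbb{B}^n$. Where you diverge is (i)$\Leftrightarrow$(ii). The paper does this in one line by a change of variable in quadratic forms: $\|A^{-1}B\|\le c$ means $|A^{-1}Bw|\le c|w|$ for all $w$, and substituting $v=Bw$ (legitimate since $B$ is invertible) this is $\|A^{-1}v\|^2\le c^2\|B^{-1}v\|^2$, i.e.\ $\langle A^{-2}v,v\rangle\le c^2\langle B^{-2}v,v\rangle$ for all $v$, which is literally the definition of (ii); no auxiliary facts about the L\"owner order are needed. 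Your route instead passes through $\|A^{-1}B\|^2=\lambda_{\max}(A^{-1}B^2A^{-1})$, congruence invariance of the order ($A^{-1}B^2A^{-1}\le c^2 I \Leftrightarrow B^2\le c^2A^2$), and operator antitonicity of inversion to land on $A^{-2}\le c^2B^{-2}$. All of those steps are valid and reversible, but the antitonicity of $X\mapsto X^{-1}$ is itself a (standard) lemma that you must either cite or prove, which makes your argument a bit heavier than the paper's self-contained two-liner; what it buys you is a more modular presentation in the language of matrix inequalities, where the statement $B^2\le c^2A^2$ appears as a natural intermediate form. One trivial bookkeeping point: the paper's $\mathbb{B}^n$ is the open unit ball and its $\|M\|$ is a max over $|x|=1$, so the ball-inclusion equivalence in (iii) strictly speaking needs the small limiting observation that $|A^{-1}Bx|<c/t$ for all $t<1$ forces $|A^{-1}Bx|\le c$; both you and the paper elide this, and it is harmless.
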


\begin{proof}
(i) is equivalent to $A^{-1}B(\mathbb B^n) \subset c \mathbb B^n$, which is equivalent to (iii). (i) is also equivalent to
\[
\langle A^{-2} v , v \rangle = ||A^{-1}v||^2 \le c^2 ||B^{-1}v||^2 = c^2 \langle B^{-2}v,v \rangle
\qquad\text{for all }v\in \R^n,
\]
which shows the equivalence with (ii).
\end{proof}

\begin{lemma}\label{l2.x2}
For any ellipsoid cover $\Theta$ and fixed $t\in\mathbb{R}$, there exists a bounded continuous function $r: \rn \to (0,\infty)$ such that balls $B(x,r(x))\subset \theta_{x,t}$ for all $x\in \R^n$.
\end{lemma}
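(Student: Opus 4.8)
\noindent\textit{Proof idea.} Fix $t\in\mathbb R$. The plan is to start from the pointwise‑optimal radius and then pass to a Lipschitz regularization, using the shape condition \eqref{e2.2} only to prevent the regularization from degenerating. For $x\in\rn$ put
\[
\rho(x):=\frac{1}{\|(M_{x,\,t})^{-1}\|},
\]
the length of the shortest semi‑axis of $\theta_{x,\,t}$. Since $|(M_{x,\,t})^{-1}v|\le\|(M_{x,\,t})^{-1}\|\,|v|$, every $v$ with $|v|<\rho(x)$ satisfies $(M_{x,\,t})^{-1}v\in\mathbb B^n$, so $B(x,\rho(x))\subset\theta_{x,\,t}$; moreover $\rho(x)^n\le|\det M_{x,\,t}|=|\theta_{x,\,t}|/|\mathbb B^n|\le a_2\,2^{-t}/|\mathbb B^n|$ by \eqref{e2.1}, so $\rho$ is bounded on $\rn$. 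Thus $\rho$ would already do the job, except that no regularity is assumed on the cover, so $x\mapsto M_{x,\,t}$ — hence $\rho$ — need not be continuous; removing this defect is the whole point of the lemma.

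The fix I would use is to set
\[
r(x):=\inf_{y\in\rn}(\tfrac12\rho(y)+|x-y|),
\]
the inf‑convolution of $\tfrac12\rho$ with the distance. As an infimum of $1$‑Lipschitz functions of $x$ it is $1$‑Lipschitz, hence continuous; taking $y=x$ gives $r(x)\le\tfrac12\rho(x)<\rho(x)$, so $r$ is bounded and $B(x,r(x))\subset B(x,\rho(x))\subset\theta_{x,\,t}$. The only real issue — and the main obstacle — is showing $r(x)>0$ for every $x$. I would reduce this to the local lower bound
\[
|x-y|<\rho(x)\ \Longrightarrow\ \rho(y)\ge a_3\,\rho(x),
\]
which is where \eqref{e2.2} enters: if $|x-y|<\rho(x)$ then $y\in B(x,\rho(x))\subset\theta_{x,\,t}$, while also $y\in\theta_{y,\,t}$ since $y$ is its center, so $\theta_{x,\,t}\cap\theta_{y,\,t}\neq\emptyset$; then \eqref{e2.2} with $s=0$ gives $\|(M_{y,\,t})^{-1}M_{x,\,t}\|\le a_3^{-1}$, whence $\|(M_{y,\,t})^{-1}\|\le\|(M_{y,\,t})^{-1}M_{x,\,t}\|\,\|(M_{x,\,t})^{-1}\|\le a_3^{-1}\|(M_{x,\,t})^{-1}\|$, i.e. $\rho(y)\ge a_3\rho(x)$.

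Granting the local bound, positivity follows at once: for any $y$, either $|x-y|\ge\rho(x)$, so $\tfrac12\rho(y)+|x-y|\ge\rho(x)$, or $|x-y|<\rho(x)$, so $\tfrac12\rho(y)+|x-y|\ge\tfrac12\rho(y)\ge\tfrac{a_3}{2}\rho(x)$; since $a_3\le1$ (apply \eqref{e2.2} with $x=y$, $s=0$), passing to the infimum over $y$ yields $r(x)\ge\tfrac{a_3}{2}\rho(x)>0$. In summary, the construction itself is routine; the delicate point is the positivity of the regularized radius, and that is exactly what forces the argument to route through the shape condition rather than through any (unavailable) continuity of $\Theta$.
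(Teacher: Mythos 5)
Your proof is correct, and it takes a genuinely different route from the one in the paper. Both arguments start from the same pointwise-optimal radius $\rho(x)=\|(M_{x,t})^{-1}\|^{-1}$ (so that $B(x,\rho(x))\subset\theta_{x,t}$) and both exploit the shape condition \eqref{e2.2} at $s=0$ only through the comparability of $\rho$ at nearby points: your local bound $\rho(y)\ge a_3\rho(x)$ for $y\in B(x,\rho(x))$ is exactly the analogue of the paper's estimate \eqref{e2.x1} (stated there with the constant $a_5$, obtained by applying \eqref{e2.2} with the roles of the two centers exchanged; either constant works since at $s=0$ the intersection condition is symmetric). Where you diverge is in how the discontinuous minorant $\rho$ is regularized: the paper runs a Vitali covering argument for the balls $B(x,\frac15 r_x)$, proves a bounded-overlap estimate, and builds $r$ as a locally finite sum of smooth bumps scaled by local minima of $\rho$, whereas you take the inf-convolution $r(x)=\inf_y\bigl(\tfrac12\rho(y)+|x-y|\bigr)$, which is automatically $1$-Lipschitz and bounded by $\tfrac12\rho$, and then use the local comparability plus $a_3\le 1$ to get the explicit lower bound $r(x)\ge\tfrac{a_3}{2}\rho(x)>0$. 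Your argument is shorter and avoids the covering/partition-of-unity machinery entirely, at the cost of producing only a Lipschitz (rather than smooth) function and without the covering data that the paper's construction generates along the way; since the lemma and its later use (in the proof of Theorem \ref{t2.1}) only require a bounded continuous $r$ with $B(x,r(x))\subset\theta_{x,t}$, your version fully suffices, and the quantitative bound $\tfrac{a_3}{2}\rho(x)\le r(x)\le\tfrac12\rho(x)$ is a small bonus.
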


\begin{proof}
Fix $t\in\mathbb{R}$. For $x\in\rn$ let $r_x:=\|M^{-1}_{x,t}\|^{-1}$. Since $\|M^{-1}_{x,t}\|r_x=1$, we have $M^{-1}_{x,t}r_x(\mathbb{B}^n)\subset \mathbb{B}^n$ and hence
\begin{equation}\label{e2.x6}
B(x,r_x)\subset\theta_{x,t}=x+M_{x,t}(\mathbb B^n).
\end{equation}
This together with the shape condition implies that, for any $x'\in B(x,r_x)$,
$$
\|M^{-1}_{x',t}\|
\le\|M^{-1}_{x',t}M_{x,t}\|\|M^{-1}_{x,t}\|\le a_5\|M^{-1}_{x,t}\|.
$$
Hence, $\frac1{a_5}r_x=\frac1{a_5}\|M^{-1}_{x,t}\|^{-1}\le \|M^{-1}_{x',t}\|^{-1}=r_{x'}.$ Similarly, we have $r_{x'}\le a_5r_x.$
Therefore,
\begin{equation}\label{e2.x1}
\frac1{a_5}r_x\le r_{x'}\le a_5r_x,\ \ x'\in B(x,r_x).
\end{equation}

By \eqref{e2.1} we have $|r_x|^n\sim|B(x,r_x)|\ls|\theta_{x,t}|\sim2^{-t}$ for any $x\in\rn$. Applying the Vitali covering lemma for the cover
 $\{B(x,\frac15r_x)\}_{x\in \R^n}$, there exists a sequence $\{x_i\}_{i\in \N} $ in $\R^n$ such that the balls $B(x_i,\frac15r_{x_i})$, $i\in \N$, are mutually disjoint and  $\rn=\bigcup_{i=1}^\infty B(x_i,r_{x_i})$. For simplicity we denote $r_i:= r_{x_i}$. For $j\in \N$ we let
 $$I(j):=\{i: B(x_i,r_i)\cap B(x_j,r_j)\neq\emptyset\}.$$
By \eqref{e2.x1} we have
$$B\lf(x_i,\frac15r_i\r)\subset B(x_i,r_i)\subset B(x_j,(2a_5+1)r_j)$$
and hence $\bigcup_{i\in I(j)}B(x_i,\frac15r_i)\subset B(x_j,(2a_5+1)r_j)$. From this and \eqref{e2.x1}, it follows that
\begin{align}\label{e2.x2}
\sharp I(j)
&\le\frac{\sum_{i\in I(j)}|B(x_i,r_i)|}{\min_{i\in I(j)}|B(x_i,r_i)|}\le\frac{\sum_{i\in I(j)}5^n|B(x_i,\frac15r_i)|}{|B(x_j,\frac1{a_5}r_j)|}\\
&\le\frac{5^n|B(x_j,(2a_5+1)r_j)|}{|B(x_j,\frac1{a_5}r_j)|}
=[5a_5(2a_5+1)]^n=:L.\nonumber
\end{align}

Choose a function $\phi\in C^\fz$ such that $\supp\phi\subset \mathbb{B}^n$, $0\le\phi\le 1$ and $\phi\equiv 1$ on $\frac12\mathbb{B}^n$. For every $i\in\nn$, define
$$\phi_i(x):=
\frac{\phi\lf(\frac{x-x_i}{5r_i}\r)}{a_5L}r^\circ_i,$$
where $r^\circ_i:=\min\{r_j:\, B(x_i,r_i)\cap B(x_j,r_j)\neq\emptyset\}$ and $L$ is as in \eqref{e2.x2}. From this and \eqref{e2.x1}, it follows that
\begin{equation}\label{e2.x3}
r^\circ_i\sim|B(x_i,r_i)|^{\frac1n}
\le|\theta_{x_i,t}|^{\frac1n}\sim2^{-\frac{t}{n}}.
\end{equation}
For $x\in \R^n$ we define
$$r(x):=\sum_{i=1}^\infty \phi_i(x).$$
This is a well-defined continuous function since on each ball $B(x_j,r_j)$ since the above series has $\le L$ non-zero terms corresponding to $i\in I(j)$. More precisely, if $x\in B(x_j,r_j)$, then
$$r(x)\le\sum_{i\in I(j) }\phi_i(x)\le\sum_{i\in I(j) }\frac{r^\circ_i}{a_5L}\le\sum_{i\in I(j)}\frac{r_j}{a_5L}\le \frac{r_j}{a_5} \le r_x.$$
This together with \eqref{e2.x6} implies that $B(x,r(x))\subset B(x,r_x)\subset\theta_{x,t}$.
\end{proof}

\begin{proof}[Proof of Theorem \ref{t2.1}]
Without loss of generality, we can assume that the matrices $M_{x,t}$ defining ellipsoids $\theta_{x,t}$ are positive definite. Indeed, it suffices to use the matrix absolute value, which is defined for a matrix $A$ as  $ |A|:=(A^*A)^{1/2}$. It is immediate that $A$ and its absolute value $|A|$ determine the same ellipsoid $A(\mathbb{B}^n)=|A|(\mathbb{B}^n)$. Hence, by Lemma \ref{gal} we can replace matrices $M_{x,t}$ by their absolute values $|M_{x,t}|$, which yields the same ellipsoid cover $\Theta$ satisfying \eqref{e2.1} and \eqref{e2.2}.

Fix $t\in \R$. Let $r:=r_t:\rn\to(0,\,\infty)$ be the continuous function as in Lemma \ref{l2.x2}. Choose a sequence $\{x_k\}$ of points in $\R^n$ such that balls $\bigcup_{k\in \N} B(x_k,r(x_k))=\R^n$. Choose a partition $\{E_k\}_{k\in\N}$ of $\R^n$ into measurable sets such that $E_k \subset B(x_k, r(x_k))$ for all $k\in \N$. For example, define
\[
E_k =
\begin{cases} B(x_1,r(x_1)) & k=1,\\
B(x_k,r(x_k)) \setminus \bigcup_{i=1}^{k-1} B(x_i, r(x_i)) & k\ge 2.
\end{cases}
\]
Define $\wz M_{x,t}=M_{x_k,t}$ if $x\in E_k$ for some $k\in \N$.
Finally, define an ellipsoid cover
$${\Xi:=\{\xi_{x,t}:=N_{x,t}(\mathbb{B}^n)+x:
\,x\in\rn,t\in \mathbb{R}\},}$$
using positive definite matrices
\begin{equation}\label{e2.x5}
N_{x,t}:=\bigg(\frac1{|B(x,r(x))|}
\int_{B(x,r(x))}(\wz M_{y,t})^{-2}\ dy \bigg)^{-1/2}.
\end{equation}
Since the function $r$ is bounded we have
\[
||(\wz M_{y,t})^{-1}|| = ||(M_{x_k,t})^{-1}|| \le 1/r(x_k) \le \sup_{x\in B(y,||r||_\infty)} 1/r(x)
\qquad\text{for all } y\in \R^n,
\]
where $k\in \N$ is such that $y \in E_k$, and hence $x_k \in B(y,||r||_\infty)$. Therefore, the vector-valued integral in \eqref{e2.x5} is well defined with values in positive definite matrices. By the continuity of the function $r$ we can easily show that
\[
x \mapsto \frac1{|B(x,r(x))|}
\int_{B(x,r(x))}(\wz M_{y,t})^{-2}\ dy
\]
is a continuous positive definite matrix valued function. Using the fact that the square root mapping $A \mapsto A^{1/2}$ is continuous on the space of all positive definite $n\times n$ matrices $A$, and the inverse mapping $A \mapsto A^{-1}$ is continuous on the space of $n\times n$ invertible matrices, we deduce that $x\mapsto N_{x,t}$ is also continuous.

It remains to show that the resulting ellipsoid $\Xi$ cover is equivalent with $\Theta$. Fix $x\in \R^n$. Take any $y\in B(x,r(x))$ and let $k\in \N$ be such that $y\in E_k$. Since $y\in \theta_{x,t} \cap \theta_{x_k,t} \ne \emptyset$ by the shape condition and Lemma \ref{gal} we have
\[
(a_5)^{-2} (M_{x,t})^{-2} \le (M_{x_k,t})^{-2} \le (a_5)^2 (M_{x,t})^{-2}.
\]
Hence,
\[
(a_5)^{-2} (M_{x,t})^{-2} \le (\wz M_{y,t})^{-2} \le (a_5)^2 (M_{x,t})^{-2}.
\]
Integrating the above inequality over $y\in B(x,r(x))$ yields
\[
(a_5)^{-2} (M_{x,t})^{-2} \le (N_{x,t})^{-2} \le (a_5)^2 (M_{x,t})^{-2}.
\]
Hence, by Lemma \ref{gal} the ellipsoid cover $\Xi$ is equivalent with $\Theta$.
\end{proof}

Next we collect results about ellipsoid covers from \cite{ddp, dhp, dpw} which will be used subsequently.

\begin{lemma}\label{l2.2}
\begin{enumerate}
\item[\rm(i)]Let $\Theta$ be an ellipsoid cover. Then there exists $J:=J(\mathbf{p}(\Theta))\geq1$ such that for any $x\in\rn$ and $t\in\mathbb{R}$,
 $$\theta_{x,\,t}\subset \frac12 \theta_{x,\,t-J}.$$
\item[\rm(ii)] For any $x,y\in \rn$ and  $s,t\in\mathbb R$ with $t \le s$, if $\theta_{x,\,t}\cap\theta_{y,\,s}\neq\emptyset$, there exists a constant $\gamma>0$ such that
    $$\theta_{y,\,s}\subset\theta_{x,\,t-\gamma}.$$
 \end{enumerate}
\end{lemma}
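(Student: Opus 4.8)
The plan is to derive both parts directly from the two defining conditions \eqref{e2.1} and \eqref{e2.2}, using the volume formula $|\theta_{x,t}| = |M_{x,t}| \cdot |\mathbb{B}^n|$ (where $|M_{x,t}|$ denotes $|\det M_{x,t}|$) together with the operator-norm bounds on the transition matrices $(M_{y,s})^{-1}M_{x,t}$.

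For part (i), I would first note that $\theta_{x,t} \subset \frac12 \theta_{x,t-J}$ is equivalent, after translating so that $x=0$ and applying $M_{x,t-J}^{-1}$, to the inclusion $M_{x,t-J}^{-1} M_{x,t}(\mathbb{B}^n) \subset \frac12 \mathbb{B}^n$, i.e. to the norm bound $\|M_{x,t-J}^{-1} M_{x,t}\| \le \frac12$. Since $\theta_{x,t-J}$ and $\theta_{x,t}$ obviously intersect (both contain $x$) and $t \ge t-J$, the shape condition \eqref{e2.2} with the roles $y=x$, scale $t-J$ and increment $s=J$ gives $\|M_{x,t-J}^{-1} M_{x,t}\| \le a_5 2^{-a_6 J}$. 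Thus it suffices to choose $J := J(\mathbf p(\Theta)) \ge 1$ large enough that $a_5 2^{-a_6 J} \le \frac12$, for instance $J = \lceil \max\{1,\, \log_2(2a_5)/a_6\}\rceil$, which depends only on the parameters $a_5, a_6$ of the cover.

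For part (ii), the goal is to find $\gamma = \gamma(\mathbf p(\Theta))$ so that $\theta_{y,s} \subset \theta_{x,t-\gamma}$ whenever $\theta_{x,t} \cap \theta_{y,s} \ne \emptyset$ and $t \le s$. The natural route is to compare $M_{x,t-\gamma}$ with $M_{y,s}$ via the chain $M_{x,t-\gamma}^{-1} M_{y,s} = (M_{x,t-\gamma}^{-1} M_{x,t})(M_{x,t}^{-1} M_{y,s})$ and to show its norm is at most $1$, which by Lemma \ref{gal} (applied after reducing to positive definite matrices as in the proof of Theorem \ref{t2.1}, or just directly from $\|A^{-1}B\|\le 1 \iff B(\mathbb B^n)\subset A(\mathbb B^n)$) yields the inclusion of ellipsoids once one also checks the centers match up; here I must be slightly careful, since the two ellipsoids have different centers $x$ and $y$, so I would instead argue that $\theta_{y,s} \subset x + M_{x,t-\gamma}(R\,\mathbb B^n)$ for a radius $R$ controlled by $\|M_{x,t-\gamma}^{-1}M_{y,s}\|$ plus the distance $\|M_{x,t-\gamma}^{-1}(y-x)\|$, and absorb both into a single large scale shift. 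The first factor is handled by part (i): $\|M_{x,t-\gamma}^{-1}M_{x,t}\| \le C 2^{-c\gamma}$ by iterating \eqref{e2.2} (or by \eqref{e2.2} directly with increment $\gamma$). The second factor $\|M_{x,t}^{-1} M_{y,s}\|$ is bounded using \eqref{e2.2} again: since $\theta_{x,t}\cap\theta_{y,s}\ne\emptyset$ and $s \ge t$, we get $\|M_{x,t}^{-1}M_{y,s}\| \le a_5 2^{-a_6(s-t)} \le a_5$. For the center displacement, since $x,y$ both lie in $\theta_{x,t}\cap\theta_{y,s}$'s ambient ellipsoids (more precisely $y-x$ is bounded in the $\theta_{x,t}$-geometry because the ellipsoids intersect), $\|M_{x,t}^{-1}(y-x)\|$ is bounded by a constant depending on the parameters. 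Combining, all relevant quantities in the $\theta_{x,t-\gamma}$-coordinates are $\lesssim 2^{-c\gamma}$ times constants, so choosing $\gamma$ large enough (depending only on $\mathbf p(\Theta)$) forces $\theta_{y,s}\subset \theta_{x,t-\gamma}$.

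The main obstacle I anticipate is the bookkeeping in part (ii) around the mismatch of centers: one cannot simply quote $\|A^{-1}B\|\le 1$ to nest the ellipsoids, because $\theta_{y,s}$ is centered at $y$, not $x$. The clean fix is to first show $y\in\theta_{x,t-\gamma_0}$ for some intermediate $\gamma_0$ (using that $\theta_{x,t}\cap\theta_{y,s}\ne\emptyset$ and part (i) to enlarge $\theta_{x,t}$ until it contains $\theta_{y,s}$'s intersection point and then $y$ itself via the shape condition), and then enlarge further by another fixed amount to swallow all of $\theta_{y,s}$; the total $\gamma = \gamma_0 + (\text{fixed increment})$ still depends only on the parameters. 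Everything else is a routine application of \eqref{e2.1}, \eqref{e2.2}, and Lemma \ref{gal}.
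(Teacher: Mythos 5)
Your argument is correct, and in fact the paper gives no proof of this lemma at all: part (i) is quoted from \cite[Lemma 2.3]{dpw} and part (ii) is described as an adaptation of \cite[Lemma 2.8]{ddp} (see also \cite[Lemma 2.4]{dpw}); those references argue exactly as you propose, via norm estimates on the transition matrices supplied by the shape condition \eqref{e2.2}. Your part (i) is complete as written. For part (ii), the bookkeeping you worry about closes without any detour through an intermediate $\gamma_0$ and without Lemma \ref{gal} (so no reduction to positive definite matrices is needed, since $\|A\|\le c$ directly gives $A(\mathbb{B}^n)\subset c\,\mathbb{B}^n$): write any $w\in\theta_{y,\,s}$ as $w=y+M_{y,\,s}u$ with $|u|<1$, and pick $z\in\theta_{x,\,t}\cap\theta_{y,\,s}$, so that $y-x=M_{x,\,t}u_1-M_{y,\,s}u_2$ with $|u_1|,|u_2|<1$; then
\begin{equation*}
\lf|M_{x,\,t-\gamma}^{-1}(w-x)\r|
\le \lf\|M_{x,\,t-\gamma}^{-1}M_{x,\,t}\r\|\lf(1+2\lf\|M_{x,\,t}^{-1}M_{y,\,s}\r\|\r)
\le a_5 2^{-a_6\gamma}\lf(1+2a_5 2^{-a_6(s-t)}\r)
\le a_5(1+2a_5)\,2^{-a_6\gamma},
\end{equation*}
using \eqref{e2.2} once with equal centers $x$ (both ellipsoids contain $x$) and once with the hypothesis $\theta_{x,\,t}\cap\theta_{y,\,s}\ne\emptyset$, $s\ge t$. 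Choosing $\gamma>\log_2\lf(a_5(1+2a_5)\r)/a_6$ makes the right-hand side $<1$, hence $\theta_{y,\,s}\subset\theta_{x,\,t-\gamma}$, with $\gamma$ depending only on $\mathbf{p}(\Theta)$, as required.
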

Part (i) of Lemma \ref{l2.2} is shown in \cite[Lemma 2.3]{dpw}. Part (ii) is an easy adaptation of the proof of \cite[Lemma 2.8]{ddp}, see also \cite[Lemma 2.4]{dpw}. Note that by increasing $\gamma$ and $J$ if necessary we can assume that $J=\gamma$. However, we prefer to keep a separate notation for $J$ and $\gamma$ to be consistent with the convention used in \cite{ddp, dpw}. The following is an immediate consequence of Lemma \ref{l2.2}.

\begin{lemma}\label{l4.8}
Let $x\in\rn$, $t\in\mathbb{R}$ and $\gamma$ be as in Lemma \ref{l2.2}(ii). Then, the sets  $\theta_{x,\,t-(j+1)\gamma}\setminus \theta_{x,\,t-j\gamma}$, $j\in\mathbb \Z$, are pairwise disjoint and
\begin{equation}\label{l4.8a}
\rn=\theta_{x,\,t}\cup \bigcup_{j\in\nn_0}(\theta_{x,\,t-(j+1)
    \gamma}\setminus \theta_{x,\,t-j\gamma}),
\end{equation}
\begin{equation}\label{l4.8b}
\rn \setminus\{x\}=\bigcup_{j \in\zz}(\theta_{x,\,t-(j+1)
    \gamma}\setminus \theta_{x,\,t-j\gamma}).
\end{equation}
\end{lemma}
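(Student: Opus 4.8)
The plan is to treat both identities as the routine ``disjointification'' of a nested family of sets, the only geometric input being the monotonicity of concentric ellipsoids in the scale parameter. Fix $x\in\rn$ and $t\in\R$ and write $A_j:=\theta_{x,\,t-j\gamma}$ for $j\in\zz$; every $A_j$ is an ellipsoid centered at $x$, so $x\in A_j$ for all $j$. The whole argument rests on the single claim that $(A_j)_{j\in\zz}$ is increasing,
\begin{equation*}
A_j\subset A_{j+1}\qquad\text{for every }j\in\zz,
\end{equation*}
which is exactly Lemma \ref{l2.2}(ii) applied with $y=x$ and $s=t'=t-j\gamma$ (the intersection $\theta_{x,\,t-j\gamma}\cap\theta_{x,\,t-j\gamma}$ is nonempty), giving $\theta_{x,\,t-j\gamma}\subset\theta_{x,\,(t-j\gamma)-\gamma}=\theta_{x,\,t-(j+1)\gamma}$; iterating, $A_j\subset A_k$ whenever $j\le k$.

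Once nesting is in hand, the pairwise disjointness of the annuli $R_j:=A_{j+1}\setminus A_j$ is formal: for $j<k$ we have $R_j\subset A_{j+1}\subset A_k$ while $R_k\cap A_k=\emptyset$, so $R_j\cap R_k=\emptyset$. The telescoping identity $A_N=A_{N_0}\cup\bigcup_{j=N_0}^{N-1}R_j$ for $N_0\le N$ is likewise immediate from the nesting, and with $N_0=0$, letting $N\to\infty$, it gives $\bigcup_{N\in\nn_0}A_N=\theta_{x,\,t}\cup\bigcup_{j\in\nn_0}R_j$. Thus \eqref{l4.8a} reduces to the assertion $\bigcup_{N\in\nn_0}A_N=\rn$, which I would deduce from Lemma \ref{l2.2}(i): iterating $\theta_{x,\,s}\subset\frac12\theta_{x,\,s-J}$ starting from $s=t$ yields $2^k\theta_{x,\,t}\subset\theta_{x,\,t-kJ}$ for all $k\in\nn$, and since $2^k\theta_{x,\,t}=x+2^kM_{x,t}(\mathbb{B}^n)$ exhausts $\rn$ as $k\to\infty$ (by invertibility of $M_{x,t}$), we get $\bigcup_{k\in\nn}\theta_{x,\,t-kJ}=\rn$; combined with the nesting (each $\theta_{x,\,t-kJ}$ sits inside some $A_N$) this gives $\bigcup_{N\in\nn_0}A_N=\rn$.

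For \eqref{l4.8b} I would extend the telescoping to two-sided sums, $\bigcup_{j=-M}^{N-1}R_j=A_N\setminus A_{-M}$, and pass to the limit in $M$ and $N$. For fixed $y\in\rn$ the set $\{j\in\zz:y\in A_j\}$ is an up-set in $\zz$, hence a half-line of integers (possibly empty, possibly all of $\zz$); consequently $y$ lies in exactly one $R_j$ iff this set is nonempty but not all of $\zz$, i.e.\ iff $y\in\bigcup_{N}A_N$ and $y\notin\bigcap_{M}A_{-M}$. Therefore $\bigcup_{j\in\zz}R_j=\rn\setminus\bigcap_{M\in\nn_0}\theta_{x,\,t+M\gamma}$, and it remains to show $\bigcap_{M\in\nn_0}\theta_{x,\,t+M\gamma}=\{x\}$: each such ellipsoid contains $x$, and iterating Lemma \ref{l2.2}(i) in the other direction gives $\theta_{x,\,t+kJ}\subset 2^{-k}\theta_{x,\,t}\subset B(x,2^{-k}\|M_{x,t}\|)$, which together with the nesting forces $\diam\theta_{x,\,t+M\gamma}\to 0$ as $M\to\infty$, so the intersection is exactly $\{x\}$ and \eqref{l4.8b} follows.

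I do not expect a genuine obstacle: all the substance is contained in Lemma \ref{l2.2}, and the remainder is the standard bookkeeping of disjointifying a nested sequence of sets. The one point needing a little care is the assertion that concentric ellipsoids $\theta_{x,\,s}$ exhaust $\rn$ as $s\to-\infty$ and shrink to their common center as $s\to+\infty$; this is where invertibility of $M_{x,t}$ enters. It could also be obtained from the shape condition \eqref{e2.2} together with the inclusion $B(x,\|M_{x,s}^{-1}\|^{-1})\subset\theta_{x,s}$ used in the proof of Lemma \ref{l2.x2}, but going through Lemma \ref{l2.2}(i) seems the shortest route.
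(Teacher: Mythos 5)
Your proposal is correct and follows essentially the same route as the paper: nesting of the concentric ellipsoids via Lemma \ref{l2.2}(ii) gives disjointness of the annuli, while iterating Lemma \ref{l2.2}(i) yields $2^k\theta_{x,t}\subset\theta_{x,t-kJ}$ (exhaustion of $\rn$) and $\theta_{x,t+kJ}\subset 2^{-k}\theta_{x,t}$ (shrinking to $\{x\}$), with the $J$-scales compared to the $\gamma$-scales through the same nesting. The only difference is bookkeeping: you spell out the telescoping and the passage between $J$ and $\gamma$ a bit more explicitly than the paper does.
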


\begin{proof}
By Lemma \ref{l2.2}(ii) we have $\theta_{x,\,t-j\gamma} \subset \theta_{x,\,t-(j+1)\gamma}$ for all $j\in\Z$. Hence, $\theta_{x,\,t-(j+1)\gamma}\setminus \theta_{x,\,t-j\gamma}$, $j\in\mathbb \Z$, are pairwise disjoint. Moreover, by Lemma \ref{l2.2}(i) for any $k\in\nn_0$ we have
\[
\theta_{x,\,t}\subset \frac1{2^k} \theta_{x,\,t-kJ}.
\]
Hence,
\[
\R^n = \bigcup_{k\in\nn_0} 2^k \theta_{x,\,t} \subset \bigcup_{k\in\nn_0}  \theta_{x,\,t-kJ}
= \bigcup_{j\in \nn_0}  \theta_{x,\,t-j\gamma} \subset \R^n,
\]
and the above inclusions are equalities. Likewise,
\[
\theta_{x,\,t+kJ}\subset \frac1{2^k} \theta_{x,\,t}
\]
implies that
\[
\{x\}= \bigcap_{k\in \nn_0} \frac1{2^k} \theta_{x,\,t} = \bigcap_{k\in \nn_0} \theta_{x,\,t+kJ} = \bigcap_{j\in \nn_0} \theta_{x,\,t+j\gamma}.
\]
Hence, \eqref{l4.8a} and \eqref{l4.8b} follow immediately.
\end{proof}

\begin{definition}\label{d2.3}
A $\it{ quasi}$-$\it {distance}$ on a set $X$ is a mapping $\rho:\ X\times X\rightarrow [0,\fz)$ that satisfies the following conditions for all $x,y,z\in X$:
\begin{enumerate}
\item[(i)]
$\rho(x,y)=0\Leftrightarrow x=y$;
\item[(ii)]
$\rho(x,y)=\rho(y,x)$;
\item[(iii)]
For some $\kappa\ge 1$,
$$\rho(x,y)\le\kappa(\rho(x,z)+\rho(y,z)).$$
\end{enumerate}
\end{definition}

Dekel, Han, and Petrushev have shown that an ellipsoid cover $\Theta$ induces a quasi-distance $\rho_\Theta$ on $\R^n$, see \cite[Proposition 2.1]{dhp}. Moreover, $\R^n$ equipped with the quasi-distance $\rho_\Theta$ and the Lebesgue measure is a space of homogeneous type, \cite[Proposition 2.10]{ddp}.

\begin{proposition}\label{p2.4}
Let $\Theta$ be a continuous ellipsoid cover. The function $\rho_\Theta:\ \rn\times \rn\rightarrow [0,\fz)$ defined by
\begin{equation}\label{e2.3}
\rho_\Theta(x,y):=\inf_{\theta\in\Theta}\lf\{|\theta|:x,y\in\theta\r\}
\end{equation}
is a quasi-distance on $\rn$. Moreover, the Lebesgue measure of balls
\begin{equation}\label{e3.3}
B_{\rho_\Theta}(x,r):=\{y\in\rn:\rho_\Theta(x,y)<r\}
\end{equation}
with respect to the quasi-distance $\rho_\Theta$ satisfies
\[
|B_{\rho_\Theta}(x,r)| \sim r \qquad\text{for all }x\in \R^n, \ r>0,
\]
with equivalence constants depending only on $\mathbf{p}(\Theta)$.
\end{proposition}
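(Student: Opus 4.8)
The plan is to establish the two claims of Proposition \ref{p2.4} separately: first that $\rho_\Theta$ is a quasi-distance, and then the volume estimate $|B_{\rho_\Theta}(x,r)|\sim r$. For the quasi-distance property, symmetry (ii) is immediate from the symmetry of the definition \eqref{e2.3}. For (i), if $x=y$ then for every $\epsilon>0$ Definition \ref{d2.1}(i) provides an ellipsoid $\theta_{x,t}\ni x$ with $|\theta_{x,t}|\le a_2 2^{-t}$, and letting $t\to\infty$ shows $\rho_\Theta(x,x)=0$; conversely, if $\rho_\Theta(x,y)=0$ then there are ellipsoids $\theta^{(k)}\ni x,y$ with $|\theta^{(k)}|\to 0$, and since each such $\theta^{(k)}=M_{z_k,t_k}(\mathbb B^n)+z_k$ has $|\theta^{(k)}|\sim 2^{-t_k}$ forcing $t_k\to\infty$, the diameter of $\theta^{(k)}$ tends to $0$ (by \eqref{e2.1} the matrix norm $\|M_{z_k,t_k}\|$ is controlled, but more carefully one uses that all semi-axes shrink — this needs the shape condition or a direct argument bounding $\operatorname{diam}\theta_{x,t}$ by a constant times $2^{-t/n}$ via John's lemma-type reasoning comparing the ellipsoid to the ball it contains), hence $|x-y|=0$.

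For the triangle inequality (iii), the key geometric input is Lemma \ref{l2.2}(ii): if two ellipsoids $\theta_{z,s}\ni x,z'$ and $\theta_{w,u}\ni z',y$ intersect (both contain $z'$), then the one of larger volume (say WLOG $\theta_{z,s}$, i.e. $s\le u$) satisfies $\theta_{w,u}\subset\theta_{z,s-\gamma}$, so that $\theta_{z,s-\gamma}$ is an ellipsoid containing both $x$ and $y$ with $|\theta_{z,s-\gamma}|\le a_2 2^{\gamma} 2^{-s}\le a_2 2^{\gamma} a_1^{-1}|\theta_{z,s}|$. Taking infima and accounting for which of the two covering ellipsoids is larger, this yields $\rho_\Theta(x,y)\le \kappa(\rho_\Theta(x,z')+\rho_\Theta(z',y))$ with $\kappa = a_2 2^\gamma/a_1$ (or twice that, to be safe). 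This is the step I expect to be the main obstacle, since one must handle the infimum-over-ellipsoids carefully and keep track of how Lemma \ref{l2.2}(ii) applies only after comparing scales; the bookkeeping with the constants $a_1,a_2,\gamma$ from $\mathbf p(\Theta)$ is where the argument could go wrong.

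For the volume estimate, the upper bound $|B_{\rho_\Theta}(x,r)|\lesssim r$ follows because if $y\in B_{\rho_\Theta}(x,r)$ then there is $\theta\in\Theta$ with $x,y\in\theta$ and $|\theta|<r$; but one needs a \emph{single} controlled set containing all such $y$. The cleanest route: show that $B_{\rho_\Theta}(x,r)\subset \theta_{x,t}$ for an appropriate $t$ with $2^{-t}\sim r$. Indeed, by Lemma \ref{l2.2}(i) and the dyadic exhaustion, if $y\in\theta$ with $|\theta|<r$ and $x\in\theta$, then $\theta\subset \theta_{x,t-\gamma}$ for $t$ chosen so that $|\theta_{x,t}|\ge r$ (using Lemma \ref{l2.2}(ii) again, comparing $\theta$ with $\theta_{x,t}$ which share the point $x$), and $|\theta_{x,t-\gamma}|\le a_2 2^\gamma 2^{-(t-\gamma)}\lesssim r$. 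For the lower bound, I would exhibit a fixed ellipsoid inside $B_{\rho_\Theta}(x,r)$: choosing $t$ with $a_2 2^{-t}< r$, every $y\in\theta_{x,t}$ satisfies $\rho_\Theta(x,y)\le|\theta_{x,t}|\le a_2 2^{-t}< r$, so $\theta_{x,t}\subset B_{\rho_\Theta}(x,r)$ and $|B_{\rho_\Theta}(x,r)|\ge|\theta_{x,t}|\ge a_1 2^{-t}\gtrsim r$. All equivalence constants depend only on $a_1,a_2,\gamma$, hence only on $\mathbf p(\Theta)$, as claimed. Since this proposition is quoted from \cite{dhp, ddp}, a short proof citing those references with the above sketch filled in would suffice, but the argument above is self-contained modulo Lemma \ref{l2.2}.
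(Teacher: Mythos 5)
The paper itself gives no proof of Proposition \ref{p2.4} (it is quoted from \cite[Proposition 2.1]{dhp} and \cite[Proposition 2.10]{ddp}); the only internal comparison point is Proposition \ref{p4.10}, and your volume argument is essentially identical to the paper's proof of that result: choose $t$ with $a_12^{-t}=r$, so that every ellipsoid $\theta_{w,u}\ni x$ with $|\theta_{w,u}|<r$ has $u\ge t$, and Lemma \ref{l2.2}(ii) places it inside $\theta_{x,t-\gamma}$; your lower bound $\theta_{x,t}\subset B_{\rho_\Theta}(x,r)$ is \eqref{e4.10}. Your route to the quasi-triangle inequality through Lemma \ref{l2.2}(ii) is also the standard one and is sound, up to one recurring bookkeeping slip: because of the slack between $a_1$ and $a_2$ in \eqref{e2.1}, ``larger volume'' does not imply ``smaller scale''. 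From $|\theta_{z,s}|\ge|\theta_{w,u}|$ you only get $u\ge s-\log_2(a_2/a_1)$, so you cannot invoke Lemma \ref{l2.2}(ii) as stated; likewise, in the upper bound for $|B_{\rho_\Theta}(x,r)|$, choosing $t$ so that $|\theta_{x,t}|\ge r$ does not force $t\le u$. The fix is to order by scale: WLOG $s\le u$, then $\theta_{w,u}\subset\theta_{z,s-\gamma}$ and $2^{-s}\le|\theta_{z,s}|/a_1\le(\rho_\Theta(x,z')+\ez)/a_1$, and symmetrically in the other case; this gives $\kappa=a_22^{\gamma}/a_1$ as you predicted, and in the ball estimate one takes $a_12^{-t}=r$ exactly as in the paper's proof of Proposition \ref{p4.10}.

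The one genuine gap is the implication $\rho_\Theta(x,y)=0\Rightarrow x=y$. Your fallback suggestion of bounding $\diam\theta_{x,t}$ by $C2^{-t/n}$ via ``John's lemma-type reasoning'' is false: condition \eqref{e2.1} controls only $|\det M_{x,t}|$, so an ellipsoid of arbitrarily small volume can have arbitrarily large diameter, and nothing in Definition \ref{d2.1}(i) alone prevents the eccentricity from blowing up as $t\to\infty$. The shape condition is indispensable here, and the argument you gesture at should be carried out, for instance as follows: if $x,y\in\theta_{z,u}$ with $|\theta_{z,u}|$ small, then $u\to\infty$ by \eqref{e2.1}, and since $x\in\theta_{x,0}\cap\theta_{z,u}\ne\emptyset$ with $u\ge0$, the shape condition \eqref{e2.2} gives $\|(M_{x,0})^{-1}M_{z,u}\|\le a_52^{-a_6u}$, hence $\|M_{z,u}\|\le\|M_{x,0}\|\,a_52^{-a_6u}$ and $|x-y|\le\diam\theta_{z,u}=2\|M_{z,u}\|\to0$, so $x=y$. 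With this step inserted and the scale-versus-volume corrections above, your proof is complete, self-contained modulo Lemma \ref{l2.2}, and yields equivalence constants depending only on $\mathbf{p}(\Theta)$, as claimed.
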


The following lemma is proved for discrete ellipsoid covers \cite[Theorem 2.1]{dhp} and continuous ellipsoid covers \cite[Theorem 2.9]{dpw}.
\begin{lemma}\label{l2.6}
Let $\Theta$ be a continuous ellipsoid cover and $\rho_\Theta$ induced by \eqref{e2.3}. There exist positive constants $C_0$ and $C_1$  such that
$$C_0|x-z|^{\frac1{a_4}}\le|\rho_\Theta(x,z)|\le C_1|x-z|^{\frac1{a_6}}\ \ \ {\rm for\ all}\ x,z \in\rn\ \ {\rm and}\ \ \rho_\Theta(x,z)\ge 1.$$
\end{lemma}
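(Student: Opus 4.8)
The plan is to derive the estimate from the defining properties of the cover, Proposition \ref{p2.4}, and the shape condition \eqref{e2.2}, working separately in the two directions and passing between Euclidean distance $|x-z|$ and the ellipsoid scale parameter $t$. First I would fix $x,z\in\rn$ with $\rho_\Theta(x,z)\ge 1$ and set $r:=\rho_\Theta(x,z)$. By Proposition \ref{p2.4} there is an ellipsoid $\theta=\theta_{w,t}$ with $x,z\in\theta$ and $|\theta|\sim r$; by \eqref{e2.1} this means $2^{-t}\sim r$, so $t\le 0$ is comparable to $-\log_2 r$. The goal is then to translate ``$x$ and $z$ both lie in an ellipsoid of scale $t$'' into a Euclidean diameter bound on $\theta_{w,t}$, and conversely.

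For the upper bound $\rho_\Theta(x,z)\le C_1|x-z|^{1/a_6}$, I would argue contrapositively or directly: starting from a small ball around $x$, use the shape condition to compare $M_{x,t}$ with $M_{x,t-s}$ as $s$ grows, which controls how fast the ellipsoids $\theta_{x,t-s}$ expand. Concretely, by \eqref{e2.2} the ratio $\|M_{x,t}^{-1}M_{x,t-s}\|\le a_5 2^{-a_6 s}$ only when scales are ordered the right way; iterating (as in Lemma \ref{l2.2} and its proof in \cite{ddp}) one sees that $\theta_{x,t}$ contains a Euclidean ball of radius $\gtrsim 2^{a_6 s}\cdot(\text{radius of }\theta_{x,t+s\text{-type object}})$, so that a point at Euclidean distance $|x-z|$ is captured by an ellipsoid $\theta_{x,t'}$ with $2^{-t'}\lesssim |x-z|^{1/a_6}$ once $|x-z|\ge$ some threshold; taking the infimum in \eqref{e2.3} gives $\rho_\Theta(x,z)\lesssim |x-z|^{1/a_6}$. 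The restriction $\rho_\Theta(x,z)\ge 1$ is exactly what lets us ignore the local regime and only use the large-scale growth rate governed by $a_4,a_6$. The lower bound $\rho_\Theta(x,z)\ge C_0|x-z|^{1/a_4}$ is symmetric: any ellipsoid $\theta$ containing both $x$ and $z$ has Euclidean diameter at least $|x-z|$, and by \eqref{e2.2} the diameter of an ellipsoid of scale $t$ is at most $\lesssim 2^{-a_4 t/n}$-type bound away from its volume$^{1/n}$; more precisely $\|M_{x,t}\|\le a_5^{?}2^{\ldots}\|M_{x,t}\|_{\text{smallest axis}}$ so the longest axis is controlled by a power of $|\theta|$. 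This forces $|\theta|\ge c|x-z|^{1/a_4}$ for every admissible $\theta$, hence $\rho_\Theta(x,z)\ge c|x-z|^{1/a_4}$.

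In more detail, the mechanism in both directions is: the shape condition bounds the operator norms $\|M_{y,t+s}^{-1}M_{x,t}\|$ and their inverses by $2^{\pm a_j s}$, and combining this with \eqref{e2.1} (which pins down $\det M_{x,t}$ up to constants via $|\theta_{x,t}|\sim 2^{-t}$) lets one bound both the largest and smallest singular values of $M_{x,t}$ by powers of $2^{-t}$ with exponents involving $a_4$ and $a_6$. Then $\theta_{x,t}=M_{x,t}(\mathbb B^n)+x$ contains $B(x,\sigma_{\min}(M_{x,t}))$ and is contained in $B(x,\sigma_{\max}(M_{x,t}))$, and $\rho_\Theta(x,z)$ is squeezed between the volumes of the smallest such ball-containing ellipsoid and the data, yielding the two-sided power bound. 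I expect the main obstacle to be the bookkeeping in the upper bound: one must iterate the shape condition over a dyadic range of scales $s\sim \log_2|x-z|$ (not a single application), carefully tracking that the accumulated constant stays $2^{O(s)}$ rather than blowing up, and handle the mismatch between matrix-norm estimates (which control axes) and volume estimates (which control determinants) — this is where the $n$-th power of $a_4$ or $a_6$ versus $a_4,a_6$ themselves enters, and where the hypothesis $\rho_\Theta(x,z)\ge 1$ is essential to discard the uncontrolled small-scale behavior. For the clean write-up I would cite the iteration already packaged in Lemma \ref{l2.2}(i)--(ii) and in \cite[Theorem 2.9]{dpw} / \cite[Theorem 2.1]{dhp} and only indicate the adaptation from the discrete to the continuous setting.
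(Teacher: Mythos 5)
You should first note how this lemma is handled in the paper: it is not proved there at all, but quoted from \cite[Theorem 2.1]{dhp} (discrete covers) and \cite[Theorem 2.9]{dpw} (continuous covers). Since the continuous-cover case is literally \cite[Theorem 2.9]{dpw}, your closing fallback (``cite \cite{dpw}/\cite{dhp} and indicate the adaptation from discrete to continuous'') is exactly what the paper does, except that no adaptation is left to indicate. So what has to be judged is your self-contained sketch, and there the argument does not close.

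The pivotal assertion in your sketch --- that \eqref{e2.1} together with \eqref{e2.2} ``lets one bound both the largest and smallest singular values of $M_{x,t}$ by powers of $2^{-t}$'' --- is unjustified, and it is precisely where the whole difficulty sits, not ``bookkeeping.'' Condition \eqref{e2.1} only fixes $|\det M_{x,t}|\sim 2^{-t}$, which for $n\ge 2$ says nothing about individual singular values, i.e.\ nothing about the eccentricity of a single ellipsoid; condition \eqref{e2.2} only compares two \emph{intersecting} ellipsoids, so iterating it over scales (your dyadic iteration, with both ellipsoids centered at $x$) yields purely relative statements such as $\sigma_{\min}(M_{x,t-s})\ge a_5^{-1}2^{a_6 s}\,\sigma_{\min}(M_{x,t})$ and $\sigma_{\max}(M_{w,t-s})\le a_3^{-1}2^{a_4 s}\,\sigma_{\max}(M_{w,t})$, anchored at a reference scale. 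To convert these into the absolute facts you actually use --- ``$\theta_{x,t'}$ contains a Euclidean ball of radius $\gtrsim 2^{a_6 s}$'' for the upper bound, and ``the longest semi-axis of any $\theta_{w,t}$ containing $x$ and $z$ is $\lesssim 2^{a_4|t|}$'' for the lower bound --- you need a two-sided bound on the scale-zero matrices that is uniform in the spatial variable, i.e.\ control of $\|M_{x,0}\|$ and $\|M_{x,0}^{-1}\|$ uniformly in $x$ (and, for the lower bound, uniformly in the center $w$ of the near-optimal ellipsoid, which need not be $x$ or $z$; Proposition \ref{p4.9} lets you recenter at $x$, but the uniformity question remains). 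Nothing in your sketch produces such an anchor: the hypothesis $\rho_\Theta(x,z)\ge 1$ only pins the relevant scales to $t\le O(1)$, it does not by itself control the eccentricity of the scale-zero ellipsoids uniformly over $\R^n$. Establishing (or circumventing) exactly this is the content of the cited theorems in \cite{dhp,dpw}, so as written your proof reduces to the citation anyway. A smaller slip in the same direction: with $t-s$ the coarser scale, \eqref{e2.2} gives $\|M_{x,t-s}^{-1}M_{x,t}\|\le a_5 2^{-a_6 s}$ and only $\|M_{x,t}^{-1}M_{x,t-s}\|\le a_3^{-1}2^{a_4 s}$, not the inequality you wrote; and the eccentricity of a single ellipsoid is never ``controlled by a power of $|\theta|$'' via \eqref{e2.2} alone, since that condition never sees one ellipsoid in isolation.
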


The following result is stated without the proof in \cite[Theorem 2.7]{dpw}. For completeness, we include its proof.

\begin{proposition}\label{p4.10}
Let $\Theta$ be a continuous ellipsoid cover and let $\rho_\Theta$ be a quasi-distance as in \eqref{e2.3}. For any ball $B_{\rho_\Theta}(x,r)$ with $x\in\rn$ and $r>0$, there exist $t_1, t_2\in \R$ such that
\[
\theta_{x, \,t_1}\subset B_{\rho_\Theta}(x,r)
\subset\theta_{x,\,t_2}
\qquad\text{and}\qquad
|\theta_{x, \,t_1}|\sim |\theta_{x,\,t_2}|\sim r,
\]
where equivalence constants depend only on $\mathbf{p}(\Theta)$.
\end{proposition}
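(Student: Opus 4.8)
The plan is to produce $t_1$ and $t_2$ by hand as explicit functions of $r$ and the cover parameters $\mathbf p(\Theta)=\{a_1,\dots,a_6\}$, using only the volume normalization \eqref{e2.1}, the definition \eqref{e2.3} of $\rho_\Theta$, and the shape condition in the form of Lemma \ref{l2.2}(ii). No measurability or continuity of $\Theta$ is needed.

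For the inner inclusion $\theta_{x,t_1}\subset B_{\rho_\Theta}(x,r)$ I would choose $t_1$ so that $a_2 2^{-t_1}=r/2$, i.e. $t_1:=\log_2(2a_2/r)$. Then \eqref{e2.1} gives $|\theta_{x,t_1}|\le r/2$, so for any $y\in\theta_{x,t_1}$ the ellipsoid $\theta_{x,t_1}$ contains both $x$ and $y$, whence by \eqref{e2.3} we get $\rho_\Theta(x,y)\le|\theta_{x,t_1}|\le r/2<r$, that is $y\in B_{\rho_\Theta}(x,r)$. The lower bound in \eqref{e2.1} gives $|\theta_{x,t_1}|\ge a_1 2^{-t_1}=(a_1/2a_2)r$, so $|\theta_{x,t_1}|\sim r$ with constants depending only on $a_1,a_2$.

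For the outer inclusion $B_{\rho_\Theta}(x,r)\subset\theta_{x,t_2}$ I would fix $y$ with $\rho_\Theta(x,y)<r$ and, using that the infimum in \eqref{e2.3} is $<r$, select an ellipsoid $\theta_{z,s}\in\Theta$ with $x,y\in\theta_{z,s}$ and $|\theta_{z,s}|<r$. The key point is that \eqref{e2.1} then forces $a_1 2^{-s}<r$, i.e. $s>\log_2(a_1/r)=:t_0$, a bound on the scale that does not depend on $y$. Since $x\in\theta_{x,t_0}\cap\theta_{z,s}$ and $t_0<s$, Lemma \ref{l2.2}(ii), applied with lower-scale ellipsoid $\theta_{x,t_0}$ and higher-scale ellipsoid $\theta_{z,s}$, gives $\theta_{z,s}\subset\theta_{x,t_0-\gamma}$, hence $y\in\theta_{x,t_0-\gamma}$. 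Thus $t_2:=t_0-\gamma$ works, and \eqref{e2.1} yields $2^\gamma r\le|\theta_{x,t_2}|\le(a_2/a_1)2^\gamma r$, i.e. $|\theta_{x,t_2}|\sim r$ with constants depending only on $\mathbf p(\Theta)$ (recall $\gamma=\gamma(\mathbf p(\Theta))$).

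The one step that is not merely mechanical is the uniformity in the outer inclusion: a priori the witnessing ellipsoid $\theta_{z,s}$ depends on $y$, so one cannot invoke Lemma \ref{l2.2}(ii) with a floating first ellipsoid. The volume normalization \eqref{e2.1} is exactly what resolves this, by converting the size bound $|\theta_{z,s}|<r$ into the scale bound $s>t_0$ with $t_0$ independent of $y$, which lets the fixed ellipsoid $\theta_{x,t_0}$ serve as the first argument throughout. (Alternatively, the final volume equivalences could be read off from Proposition \ref{p2.4}, but the construction above delivers them directly.)
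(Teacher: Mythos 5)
Your proposal is correct and follows essentially the same route as the paper's proof: the inner inclusion by picking $t_1$ so that $|\theta_{x,t_1}|\le r/2$ via \eqref{e2.1}, and the outer inclusion by converting the volume bound $|\theta_{z,s}|<r$ into a uniform scale bound $s\ge t_0$ and then applying Lemma \ref{l2.2}(ii) with the fixed ellipsoid $\theta_{x,t_0}$ to get $t_2=t_0-\gamma$. The paper's argument, phrased through the union formula \eqref{e4.9} for $B_{\rho_\Theta}(x,r)$, is the same construction with the same constants.
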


\begin{proof}
By \eqref{e2.3}, any ball $B_{\rho_\Theta}(x,r)$ with respect to quasi-distance $\rho_\Theta$ is a union of all ellipsoids $\theta$ in $\Theta$ that contain $x$ and $|\theta|<r$, i.e.,
\begin{align}\label{e4.9}
B_{\rho_\Theta}(x,r)=\{y\in\rn:\rho_\Theta(x,y)<r \}=\bigcup_{x\in\theta\in\Theta,\, |\theta|<r}\theta.
\end{align}
We claim that there exists an ellipsoid $\theta_{x,\,t_1}\in\Theta$ such that $|\theta_{x,\,t_1}|<r$ and $|\theta_{x,\,t_1}|\sim r$. In fact, by \eqref{e2.1}, we know that for the two constants $\tilde a_1:=\log_2 a_1$ and $\tilde a_2:=\log_2 a_2$, it holds true that
$$2^{-t+\tilde a_1}\le |\theta_{x,\,t}|\le 2^{-t+\tilde a_2} \quad \mathrm{for\ any\ } x\in\rn\ \mathrm{and} \  t\in\mathbb{R}.$$
Hence, by a substitution $\tilde t=t-\tilde a_2$, we have
$$2^{-\tilde t+\tilde a_1-\tilde a_2}\le |\theta_{x,\,\tilde t+\tilde a_2}|\le 2^{-\tilde t}.$$
Taking $t_1:=\tilde t+\tilde a_2$ and $\frac r2:=2^{-\tilde t}$, leads to
$$ 2^{\tilde a_1-\tilde a_2-1}r\le |\theta_{x,\,t_1}|\le \frac r2.$$
This shows that
\begin{align}\label{e4.10}
\theta_{x,\,t_1}\subset B_{\rho_\Theta}(x,r)\  \ \mathrm {and} \ \ |\theta_{x,\,t_1}|\sim r.
\end{align}

Let $t\in \R$ such that $a_1 2^{-t} =r$.  Then for any ellipsoid $\theta=\theta_{y,\, s} \in\Theta$ with $|\theta|<r$ we have
\[
a_1 2^{-s} \le |\theta_{y,\, s}| < r = a_1 2^{-t} \le |\theta_{x,\, t}|.
\]
Hence, $t\le s$. In addition, if $x\in\theta_{y,\, s}$, then by Lemma \ref{l2.2}(ii)
$$
\theta_{y,\, s} \subset
\theta_{x,\, t_2}, \qquad\text{where }t_2:=t-\gamma.$$
Since $\theta \in \Theta$ with $x\in \theta$ and $|\theta|<r$ is arbitrary,
\eqref{e4.9} implies that
$$B_{\rho_\Theta}(x,r)\subset \theta_{x,\,t_2}\ \  \mathrm{and}\ \ |\theta_{x,\,t_2}|\sim r.$$
Combining this with \eqref{e4.10} completes the proof of Proposition \ref{p4.10}.
\end{proof}

It is often useful to use a non-symmetric variant of the quasi-distance $\rho_\Theta$ as in \eqref{e2.3}.

\begin{proposition}\label{p4.9}
Let $\Theta$ be a continuous ellipsoid cover. For any $x,\,y\in\rn$ define $\rho_1(x,y):=\inf_{y\in\theta_{x,\,t}\in\Theta}|\theta_{x,\,t}|$. Then $\rho_1(x,y) \sim \rho_\Theta(x,y)$ with equivalence constant independent of the choice of   $x,\,y\in\rn$.
\end{proposition}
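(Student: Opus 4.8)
The plan is to prove the claimed equivalence by establishing the two one-sided estimates separately; one is immediate from the definitions and the other follows from the cofinality of $x$-centered ellipsoids among all ellipsoids containing $x$, which is exactly Lemma \ref{l2.2}(ii).

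First I would record that $\rho_\Theta(x,y)\le\rho_1(x,y)$ for free. Every ellipsoid $\theta_{x,\,t}$ competing in the infimum defining $\rho_1(x,y)$ contains $x$ (it is its center) and contains $y$ (by the constraint in that infimum), hence it is also a competitor in the infimum \eqref{e2.3} defining $\rho_\Theta(x,y)$. Since $\rho_\Theta$ infimizes over a larger family of ellipsoids, $\rho_\Theta(x,y)\le\rho_1(x,y)$. Here one should note in passing that both families are nonempty: by Lemma \ref{l4.8} (or directly from Lemma \ref{l2.2}(i)) the ellipsoids $\theta_{x,\,t-j\gamma}$, $j\in\nn_0$, cover $\rn$, so some $x$-centered ellipsoid contains $y$, and this ellipsoid then also witnesses $\rho_\Theta(x,y)<\fz$.

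For the reverse estimate $\rho_1(x,y)\lesssim\rho_\Theta(x,y)$, I would fix an arbitrary $\theta_{z,\,s}\in\Theta$ with $x,y\in\theta_{z,\,s}$ and manufacture from it an $x$-centered ellipsoid containing $y$ of comparable volume. Since $x\in\theta_{z,\,s}$ and trivially $x\in\theta_{x,\,s}$, the two scale-$s$ ellipsoids $\theta_{x,\,s}$ and $\theta_{z,\,s}$ intersect, so Lemma \ref{l2.2}(ii) applied to them (with $t=s\le s$) produces a constant $\gamma>0$, depending only on $\mathbf{p}(\Theta)$, such that $\theta_{z,\,s}\subset\theta_{x,\,s-\gamma}$. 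In particular $y\in\theta_{x,\,s-\gamma}$, and hence by \eqref{e2.1}
\[
\rho_1(x,y)\le|\theta_{x,\,s-\gamma}|\le a_2 2^{-(s-\gamma)}=a_2 2^\gamma 2^{-s}\le\frac{a_2 2^\gamma}{a_1}\,|\theta_{z,\,s}|.
\]
Taking the infimum over all such $\theta_{z,\,s}$ gives $\rho_1(x,y)\le(a_2 2^\gamma/a_1)\,\rho_\Theta(x,y)$, and combining this with the first step yields $\rho_1(x,y)\sim\rho_\Theta(x,y)$ with constants depending only on $\mathbf{p}(\Theta)$.

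Since both steps use only results already established in this section, I do not expect a genuine obstacle. The only points demanding slight care are the bookkeeping ensuring the infima are over nonempty collections and the verification that the constant $\gamma$ furnished by Lemma \ref{l2.2}(ii) is independent of $x$, $y$, $z$, and $s$, so that the final equivalence constant depends only on $\mathbf{p}(\Theta)$.
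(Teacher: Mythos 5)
Your proof is correct. It diverges mildly from the paper's: the paper deduces the reverse inequality $\rho_1(x,y)\lesssim\rho_\Theta(x,y)$ from Proposition \ref{p4.10}, taking $r=2\rho_\Theta(x,y)$ and using the sandwich $\theta_{x,t_1}\subset B_{\rho_\Theta}(x,r)\subset\theta_{x,t_2}$ with $|\theta_{x,t_2}|\sim r$, so that $y\in\theta_{x,t_2}$ gives $\rho_1(x,y)\le|\theta_{x,t_2}|\sim\rho_\Theta(x,y)$; you instead bypass Proposition \ref{p4.10} and apply Lemma \ref{l2.2}(ii) directly to each competitor $\theta_{z,s}\ni x,y$ (valid, since $x\in\theta_{x,s}\cap\theta_{z,s}$ and $\gamma$ depends only on $\mathbf{p}(\Theta)$), obtaining $\theta_{z,s}\subset\theta_{x,s-\gamma}$ and then the explicit bound $\rho_1(x,y)\le(a_2 2^\gamma/a_1)\,\rho_\Theta(x,y)$ via \eqref{e2.1} before taking the infimum. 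Since Proposition \ref{p4.10} is itself proved from Lemma \ref{l2.2}(ii) by essentially the same replacement of an arbitrary ellipsoid containing $x$ by an $x$-centered one of comparable volume, the two arguments share the same mechanism; yours is the more self-contained and yields an explicit constant, while the paper's is shorter given that Proposition \ref{p4.10} is already available and reuses it as a black box. Your attention to the nonemptiness of the infimizing families (via Lemma \ref{l4.8}) and to the uniformity of $\gamma$ is appropriate, though the paper takes both for granted.
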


\begin{proof}
Obviously, $\rho_\Theta(x,y)\le\rho_1(x,y)$. So it remains to prove that there exists a constant $C>0$ such that $\rho_1(x,y)\le C\rho_\Theta(x,y)$. Let $r= 2 \rho_\Theta(x,y)$. By Proposition \ref{p4.10}, there exist two ellipsoids $\theta_{x,\,t_1}$ and $\theta_{x,\,t_2}$ such that $|\theta_{x,\,t_1}|\sim|\theta_{x,\,t_2}|\sim r$ and
$$\theta_{x,\,t_1}\subset B_{\rho_\Theta}(x,r) \subset\theta_{x,\,t_2}.$$
Since $y\in B_{\rho_\Theta}(x,r) \subset\theta_{x,\,t_2}$, by the definition of $\rho_1$, we conclude that
$$\rho_1(x,y)\le|\theta_{x,\,t_2}|\sim r= 2 \rho_\Theta(x,y),$$
which completes the proof of Proposition \ref{p4.9}.
\end{proof}

Finally, we will need the following useful lemma.

\begin{lemma}\label{l4.12}
Let $z\in\mathbb{R}^n$, $t\in\mathbb R$, and $y\in\theta_{z,\,t}$. Then, for any $k\in \nn$ we have
\begin{equation}\label{l40}
\theta_{z,\,t-(k+1)\gamma}\setminus\theta_{z,\,t-k\gamma} \subset \theta_{y,\, t-(k+2) \gamma}\setminus\theta_{y,\,t-(k-1) \gamma},
\end{equation}
where $\gamma$ is as in Lemma \ref{l2.2}(ii).

In particular, if $x\in \theta_{z,\,t-(k+1)\gamma}\setminus\theta_{z,\,t-k\gamma}$ for some $k\in \nn$, then
\begin{equation}\label{e4.16}
\rho_\Theta(x,\,z)\sim\rho_{\Theta}(x,\,y) \sim 2^{-t+k\gamma}.
\end{equation}
\end{lemma}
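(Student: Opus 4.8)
The plan is to derive \eqref{l40} from two one-sided ellipsoid containments, and then read off \eqref{e4.16} from \eqref{l40} together with the defining infimum \eqref{e2.3}. I would first record two elementary consequences of Lemma \ref{l2.2}(ii) that get used repeatedly. (a) For a fixed center $w$ and reals $a\ge b+\gamma$ one has $\theta_{w,\,a}\subset\theta_{w,\,b}$: apply Lemma \ref{l2.2}(ii) to $\theta_{w,\,b+\gamma}$ and $\theta_{w,\,a}$, which meet at $w$, getting $\theta_{w,\,a}\subset\theta_{w,\,(b+\gamma)-\gamma}$; in particular $\theta_{w,\,t}\subset\theta_{w,\,t-j\gamma}$ for every integer $j\ge 0$. (b) If two ellipsoids $\theta_{u,\,s}$ and $\theta_{v,\,s}$ of the same scale intersect, then $\theta_{v,\,s}\subset\theta_{u,\,s-\gamma}$ and $\theta_{u,\,s}\subset\theta_{v,\,s-\gamma}$; this is Lemma \ref{l2.2}(ii) with equal scales.

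Next I would reduce \eqref{l40}: since $A\subset C$ and $D\subset B$ imply $A\setminus B\subset C\setminus D$, it suffices to prove
\[
\theta_{z,\,t-(k+1)\gamma}\subset\theta_{y,\,t-(k+2)\gamma}
\qquad\text{and}\qquad
\theta_{y,\,t-(k-1)\gamma}\subset\theta_{z,\,t-k\gamma}.
\]
From $y\in\theta_{z,\,t}$ and (a) I get $y\in\theta_{z,\,t-(k+1)\gamma}$, and---using $k\ge 1$---also $y\in\theta_{z,\,t-(k-1)\gamma}$. Hence $\theta_{z,\,t-(k+1)\gamma}$ and $\theta_{y,\,t-(k+1)\gamma}$ intersect (both contain $y$), as do $\theta_{z,\,t-(k-1)\gamma}$ and $\theta_{y,\,t-(k-1)\gamma}$; applying (b) to each pair---taking the conclusion centered at $y$ in the first case and at $z$ in the second---yields the two displayed containments, hence \eqref{l40}.

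Finally, for \eqref{e4.16} I would fix $x\in\theta_{z,\,t-(k+1)\gamma}\setminus\theta_{z,\,t-k\gamma}$, so that by \eqref{l40} also $x\in\theta_{y,\,t-(k+2)\gamma}\setminus\theta_{y,\,t-(k-1)\gamma}$. The upper bounds $\rho_\Theta(x,z)\ls 2^{-t+k\gamma}$ and $\rho_\Theta(x,y)\ls 2^{-t+k\gamma}$ are immediate from \eqref{e2.3} and \eqref{e2.1}, since $z$ and $y$ are the centers of $\theta_{z,\,t-(k+1)\gamma}$ and $\theta_{y,\,t-(k+2)\gamma}$. For the lower bound on $\rho_\Theta(x,z)$, take any $\theta_{w,\,s}\in\Theta$ with $x,z\in\theta_{w,\,s}$; since $z\in\theta_{w,\,s}\cap\theta_{z,\,s}$, part (b) gives $x\in\theta_{w,\,s}\subset\theta_{z,\,s-\gamma}$, so by (a) one cannot have $s-\gamma\ge(t-k\gamma)+\gamma$ (that would force $x\in\theta_{z,\,t-k\gamma}$); this bounds $s$ above and gives $|\theta_{w,\,s}|\sim2^{-s}\gtrsim 2^{-t+k\gamma}$. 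Taking the infimum over such $\theta_{w,\,s}$ yields $\rho_\Theta(x,z)\gtrsim 2^{-t+k\gamma}$, and the identical argument with $z$ and $\theta_{z,\,t-k\gamma}$ replaced by $y$ and $\theta_{y,\,t-(k-1)\gamma}$ gives $\rho_\Theta(x,y)\gtrsim 2^{-t+k\gamma}$. Combining these four estimates proves \eqref{e4.16}.

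The argument is short, so the main thing to watch is the $\gamma$-bookkeeping: each same-center enlargement and each change of center costs exactly one factor of $\gamma$, and one must check that these losses place the containments precisely where \eqref{l40} requires---this is also where the hypotheses $y\in\theta_{z,\,t}$ and $k\in\nn$ (rather than only $k\ge 0$) enter.
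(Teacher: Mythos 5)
Your proposal is correct. For \eqref{l40} it is essentially the paper's own argument: from $y\in\theta_{z,\,t}$ and the nestedness furnished by Lemma \ref{l2.2}(ii) you place $y$ in $\theta_{z,\,t-(k+1)\gamma}$ and $\theta_{z,\,t-(k-1)\gamma}$, apply Lemma \ref{l2.2}(ii) to the two equal-scale pairs to get $\theta_{z,\,t-(k+1)\gamma}\subset\theta_{y,\,t-(k+2)\gamma}$ and $\theta_{y,\,t-(k-1)\gamma}\subset\theta_{z,\,t-k\gamma}$, and take complements; your $\gamma$-bookkeeping, including the role of $k\ge 1$, is accurate. The only difference is in \eqref{e4.16}: the paper invokes Proposition \ref{p4.9} to replace $\rho_\Theta$ by the non-symmetric quantity $\rho_1$, for which the two-sided bound $\rho_1(z,x)\sim 2^{-t+k\gamma}\sim\rho_1(y,x)$ is immediate from \eqref{l40} and nestedness, whereas you estimate $\rho_\Theta$ directly from the definition \eqref{e2.3}: the upper bounds from the containing ellipsoids via \eqref{e2.1}, and the lower bounds by re-centering an arbitrary competitor $\theta_{w,\,s}\ni x,z$ (resp.\ $x,y$) through one more application of Lemma \ref{l2.2}(ii) and deriving a contradiction with $x\notin\theta_{z,\,t-k\gamma}$ (resp.\ $x\notin\theta_{y,\,t-(k-1)\gamma}$). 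In effect you reprove the relevant half of Proposition \ref{p4.9} in this special case; your route is self-contained at the cost of a couple of extra lines, while the paper's citation is shorter. Both are valid, and the constants you lose ($2^{-2\gamma}$, $2^{-3\gamma}$, $a_1$, $a_2$) depend only on $\mathbf{p}(\Theta)$, as required.
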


\begin{proof}
Since
\[
y \in  \theta_{z,\,t-(k+1)\gamma } \cap \theta_{y,\,t-(k+1)\gamma} \ne \emptyset,
\]
by Lemma \ref{l2.2}(ii) we have $\theta_{z,\,t-(k+1)\gamma } \subset \theta_{y,\,t-(k+2)\gamma }$. Likewise, since
\[
y \in  \theta_{z,\,t-(k-1)\gamma } \cap \theta_{y,\,t-(k-1)\gamma} \ne \emptyset,
\]
Lemma \ref{l2.2}(ii) yields $\theta_{y,\,t-(k-1) \gamma } \subset \theta_{z,\,t-k\gamma }$. Therefore, by taking complements we deduce \eqref{l40}.

Finally, let $x\in \theta_{z,\,t-(k+1)\gamma}\setminus\theta_{z,\,t-k\gamma}$ for some $k\in \nn$.
By Proposition \ref{p4.9} and \eqref{l40} we have
$$\rho_\Theta(x,z)=\rho_\Theta(z,x)\sim\rho_1(z,x)\sim2^{-t+k\gamma}\sim \rho_1(y,x)\sim\rho_{\Theta}(y,x)
=\rho_{\Theta}(x,y).$$
This yields \eqref{e4.16}.
\end{proof}




\section{Variable Anisotropic Hardy Spaces}\label{s3}
In this section we recall the definition and properties of Hardy spaces with pointwise variable anisotropy which were originally introduced by Dekel, Petrushev, and Weissblat \cite{dpw}.

Let $\Theta$ be a continuous ellipsoid cover. For any locally integrable function $f$ on $\rn$, the {\it Hardy-Littlewood\
maximal\ operators\ $M_{B_{\rho_\Theta}}$\ and\ $M_\Theta$} are defined, respectively, to be
\begin{equation}\label{e3.1}
M_{B_{\rho_\Theta}} f(x):=\sup_{r>0}\frac1{|B_{\rho_\Theta}(x,r)|}\int_{B_{\rho_\Theta}(x,\,r)}|f(y)|dy
\end{equation}
and
\begin{equation}\label{e3.2}
M_{\Theta}f(x):=\sup_{t\in\mathbb{R}}\frac1{|\theta_{x,\,t}|}\int_{\theta_{x,\,t}}|f(y)|dy,
\end{equation}
where $B_{\rho_\Theta}(x,r)$ is as in \eqref{e3.3}.
By Proposition \ref{p4.10}, see \cite[Lemma 3.2]{dpw}, these two maximal functions are pointwise equivalent
\begin{equation}\label{l32}
M_{B_{\rho_\Theta}}f(x)\sim M_\Theta f(x)
\qquad\text{for all }f\in L^1_{\loc},  x\in\rn.
\end{equation}

\begin{definition}
Let $N,\wz{N}\in\nn_0$ with $N\le\wz{N}$. For a function $\varphi\in C^N$, let
$$\|\varphi\|_{N,\,\wz{N}}:=\max_{|\alpha|\le N} \sup_{y\in\rn}(1+|y|)^{\wz{N}}|\partial^\alpha\varphi(y)|. $$
Define
$$\mathcal{S}_{N,\,\wz{N}}:=\{\varphi\in\mathcal{S}:\,\|\varphi\|_{N,\,\wz{N}}\le 1\}.$$
\end{definition}

For each $x\in\rn$, $t\in\mathbb{R}$ and $\theta_{x,\,t}=M_{x,\,t}(\mathbb{B}^n)+x\in\Theta$, denote
$$\varphi_{x,\,t}(y):=|\det (M^{-1}_{x,t})| \varphi(M^{-1}_{x,\,t}y).$$

\begin{definition}\label{d3.2}
Let $f\in\mathcal{S'}$, $\varphi\in\mathcal{S}$, and $r\in(0,\fz)$. The {\it nontangential maximal function with aperture $r$} of $f$ is defined by
$$M_\varphi^r f(x):=\sup_{t\in\mathbb{R}}\,\sup_{y\in{r\,\theta_{x,\,t}}}|f\ast\varphi_{x,\,t}(y)|
\qquad {\rm for \ all} \ x\in\rn.$$
For any $N,\wz{N}\in\nn_0$ with $N\le\wz{N}$, the { \it nontangential grand maximal function with aperture $r$} of $f$ is defined by
$$M_{N,\,\wz{N}}^r f(x):=\sup_{\varphi\in\mathcal{S}_{N,\,\wz{N}}}M_\varphi^r f(x)\ \ \ {\rm for \ all} \ x\in\rn. $$
When aperture $r=1$, we obtain the {\it nontangential maximal function $M_\vz f$} and the {\it nontangential  grand maximal function} $M_{N,\,\wz N} f$ of $f$, respectively.
\end{definition}

\begin{definition}
Let $f\in\mathcal{S'}$ and $\varphi\in\mathcal{S}$. The $\it{radial\ maximal\ function}$ of $f$ is defined by
$$M^\circ_\varphi f(x):=\sup_{t\in\mathbb{R}}|f\ast\varphi_{x,\,t}(x)|\ \ \  {\rm for\ all}\ x\in\rn.$$
For any $N,\,\wz{N}\in\nn_0$ with $N\le\wz{N}$, the {\it radial grand maximal function} of $f$ is defined by
$$M^\circ_{N,\,\wz{N}}f(x):=\sup_{\varphi \,\in\mathcal{S}_{N,\,\wz{N}}}M^\circ_\varphi f(x)\ \ \ {\rm for\ all}\ x\in\rn.$$
\end{definition}

Next, we give the pointwise equivalence of the nontangential grand maximal function $M^r_{N,\,\wz{N}} f$ and the radial grand maximal function $M^\circ_{N,\,\wz{N}}f$ of $f$, the proof of which is motivated by \cite[p.17, Proposition 3.10]{b}.
\begin{theorem}\label{t3.4}
For any $N,\wz{N}\in\nn_0$ with $N\le\wz{N}$, there exists a constant $C:=C(N,\wz{N},r)$ such that for all $f\in\mathcal{S'}$,
$$M^\circ_{N,\,\wz{N}}f(x)\le M_{N,\,\wz{N}}^r f(x)\le C M^\circ_{N,\,\wz{N}}f(x) \ \ \ {\rm for\ a. e.} \ x\in\rn.$$
\end{theorem}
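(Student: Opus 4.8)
\textbf{Proof proposal for Theorem \ref{t3.4}.}

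The plan is to follow the classical scheme for establishing equivalence of radial and nontangential (grand) maximal functions, adapted to the variable anisotropic setting. The first inequality $M^\circ_{N,\,\wz N}f(x)\le M^r_{N,\,\wz N}f(x)$ is trivial: taking $y=x$ is admissible in the supremum defining $M^r_\vz f(x)$ since $x\in r\,\theta_{x,\,t}$ for every $r>0$ and $t\in\R$ (the center $x$ lies in the ellipsoid $\theta_{x,\,t}=M_{x,\,t}(\mathbb B^n)+x$, and hence in any dilate). So the whole content is in the reverse inequality $M^r_{N,\,\wz N}f(x)\le C\,M^\circ_{N,\,\wz N}f(x)$.

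For the reverse direction, fix $x\in\rn$, $t\in\R$, $\vz\in\mathcal S_{N,\,\wz N}$, and a point $y\in r\,\theta_{x,\,t}$; I want to bound $|f\ast\vz_{x,\,t}(y)|$ by $C\,M^\circ_{N,\,\wz N}f(x)$. The key observation is that $f\ast\vz_{x,\,t}(y)$ can be rewritten as $f$ convolved at the point $x$ against a suitably translated and renormalized bump, namely $f\ast\psi_{x,\,t}(x)$ where $\psi(z):=\vz(z+M^{-1}_{x,\,t}(y-x))$. Since $y\in r\,\theta_{x,\,t}$ we have $|M^{-1}_{x,\,t}(y-x)|<r$, so $\psi$ is a shift of $\vz$ by a vector of Euclidean length less than $r$. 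The point of the $\mathcal S_{N,\,\wz N}$ seminorm is precisely that it is stable, up to a dimensional constant depending on $N,\wz N,r$, under such bounded translations: if $\|\vz\|_{N,\,\wz N}\le 1$ then $\|\psi\|_{N,\,\wz N}\le C(N,\wz N,r)$, because $(1+|z|)^{\wz N}\le (1+r)^{\wz N}(1+|z+v|)^{\wz N}$ whenever $|v|<r$, and derivatives are simply translated. Here one should note that $\psi_{x,\,t}$ (defined with the same matrix $M_{x,\,t}$) equals the translate $\vz_{x,\,t}(\cdot - (y-x))$ up to the chain-rule bookkeeping, so that $f\ast\psi_{x,\,t}(x)=f\ast\vz_{x,\,t}(y)$ holds exactly — this is where the affine structure of $\theta_{x,\,t}$ (same center $x$, matrix $M_{x,\,t}$) is used. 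Consequently $|f\ast\vz_{x,\,t}(y)|=|f\ast\psi_{x,\,t}(x)|\le C(N,\wz N,r)\,M^\circ_\vz f(x)\le C(N,\wz N,r)\,M^\circ_{N,\,\wz N}f(x)$, after rescaling $\psi$ by the constant $C(N,\wz N,r)$ to land back in $\mathcal S_{N,\,\wz N}$. Taking the supremum over $t$, $y\in r\,\theta_{x,\,t}$, and $\vz\in\mathcal S_{N,\,\wz N}$ finishes it.

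The step I expect to be the main obstacle is making precise the reduction ``$f\ast\vz_{x,\,t}(y)=f\ast\psi_{x,\,t}(x)$ for an appropriate $\psi$ with controlled seminorm''. One has to be careful that the dilation $\vz\mapsto\vz_{x,\,t}$ uses the \emph{same} anisotropic matrix $M_{x,\,t}$ that appears in the target ellipsoid $\theta_{x,\,t}$, so that the translation by $y-x$ in the ambient space corresponds exactly to a translation by $M^{-1}_{x,\,t}(y-x)$ in the ``model'' variable, whose Euclidean norm is $<r$ precisely by the hypothesis $y\in r\,\theta_{x,\,t}$. Unlike in the isotropic or fixed-anisotropy cases, the matrix $M_{x,\,t}$ varies with both $x$ and $t$, but since $x$ and $t$ are held fixed in this estimate no uniformity across the cover is needed here — the shape condition \eqref{e2.2} is not required for this particular theorem, only the affine normalization of each individual $\theta_{x,\,t}$. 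The ``a.e.'' in the statement is only needed because $f\in\mathcal S'$ and $M^\circ_{N,\,\wz N}f$, $M^r_{N,\,\wz N}f$ are a priori only measurable (indeed lower semicontinuous) functions; the pointwise inequalities derived above actually hold at every $x\in\rn$, so one may even assert them everywhere.
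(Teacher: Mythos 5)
Your proposal is correct and follows essentially the same route as the paper: rewrite $f\ast\vz_{x,\,t}(y)$ for $y\in r\,\theta_{x,\,t}$ as $f\ast\psi_{x,\,t}(x)$ with $\psi(z)=\vz(z+v)$, $|v|<r$, and then use the stability of the $\|\cdot\|_{N,\,\wz N}$ seminorm under bounded translations, $\|\psi\|_{N,\,\wz N}\le (1+r)^{\wz N}\|\vz\|_{N,\,\wz N}$, to conclude $M^r_{N,\,\wz N}f\le (1+r)^{\wz N}M^\circ_{N,\,\wz N}f$ pointwise. This is exactly the paper's argument (the paper parametrizes $y=x+rM_{x,\,t}w$, $w\in\mathbb B^n$, which is your shift $v=M^{-1}_{x,\,t}(y-x)$), including the observation that the inequality actually holds at every point.
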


\begin{proof}
The first inequality is obvious. To show the second inequality, note that
\begin{align}\label{e3.4}
M_{N,\,\wz{N}}^r f(x)
&=\sup\{|f\ast\varphi_{x,\,t}(x+r M_{x,\,t}y)|:\,y\in\mathbb{B}^n,\,
t\in\mathbb{R},\,\varphi\in\mathcal{S}_{N,\,\wz{N}}\}\\
&=\sup\{|f\ast\phi_{x,\,t}(x)|:\,\phi(z):=\varphi(z+r y),\ y\in\mathbb{B}^n,\,t\in\mathbb{R},\,\varphi\in\mathcal{S}_{N,\,\wz{N}}\}\nonumber\\
&=\sup\{M^\circ_\phi f(x):\,\phi(z)=\varphi(z+r y),\ y\in\mathbb{B}^n,\,t\in\mathbb{R},\,\varphi\in\mathcal{S}_{N,\,\wz{N}}\}.\nonumber
\end{align}
For $\phi(z)=\varphi(z+r y)$ with $y\in\mathbb{B}^n$, we have
\begin{align}\label{e3.5}
\|\phi\|_{N,\,\wz{N}}&=\sup_{|\alpha|\le N}\sup_{x\in\rn}(1+|x|)^{\wz{N}}\lf|\partial^\alpha\varphi(x+r y)\r|\\
&=\sup_{|\alpha|\le N}\sup_{x\in\rn}(1+|x-r y|)^{\wz{N}}|\partial^\alpha\varphi(x)|\nonumber\\
&\le(1+r)^{\wz{N}}\sup_{|\alpha|\le N}\sup_{x\in\rn}(1+|x|)^{\wz{N}}|\partial^\alpha\varphi(x)|
=(1+r)^{\wz{N}}\|\varphi\|_{N,\,\wz{N}}.\nonumber
\end{align}
Combining \eqref{e3.4} and \eqref{e3.5}, we have
$$M_{N,\,\wz{N}}^r f(x)\le\sup\{M^\circ_\phi f(x):\, \phi\in\mathcal{S},\|\phi\|_{N,\,\wz{N}}\le(1+r)^{\wz{N}}\}
\le(1+r)^{\wz{N}}M^\circ_{N,\,\wz{N}}f(x),$$
which is desired and hence completes the proof of Theorem \ref{t3.4}.
\end{proof}

Hence, from Theorem \ref{t3.4} and a proof similar to that of \cite[Proposition 2.11(i)]{blyz}, we deduce that, for any $f\in\cs'\cap L^1_\loc$,
\begin{equation}\label{e3.6}
|f(x)|\le M^\circ_{N,\,\wz{N}}f(x)\le M_{N,\,\wz{N}}^r f(x)\le C M^\circ_{N,\,\wz{N}}f(x) \ \ \ {\rm for} \ x\in\rn.
\end{equation}

Let $\Theta$ be a continuous ellipsoid cover of $\rn$ with parameters $\mathbf{p}(\Theta)=\{a_1,\dots,a_6\}$
and let $0<p\le 1$. We define $N_p=N_p(\Theta)$ as the minimal integer satisfying
\begin{equation}\label{e3.7}
N_p(\Theta)>\frac{\max(1,a_4)n+1}{a_6p},
\end{equation}
and then $\wz{N}_p=\wz{N}_p(\Theta)$ as the minimal integer satisfying
\begin{equation*}
\wz{N}_p(\Theta)>\frac{a_4N_p(\Theta)+1}{a_6}.
\end{equation*}

\begin{definition}\label{d3.5}
Let $\Theta$ be a continuous ellipsoid cover, $0<p\le 1$ and $M^\circ:=M^\circ_{N_p,\,\wz{N}_p}$. The {\it variable anisotropic Hardy space} is defined as
$$H^p(\Theta):=\{f\in\mathcal{S'}:\,M^\circ f\in L^p\}$$
with the quasi-norm $\|f\|_{H^p(\Theta)}:=\|M^\circ f\|_p$.
\end{definition}

\begin{lemma}\label{l5.1}
Let $\Theta$ be a continuous ellipsoid cover and $0<p\le 1$.
\begin{enumerate}
\item[\rm{(i)}] The inclusion $H^p(\Theta)
\hookrightarrow\mathcal{S'}$ is continuous;
\item[\rm{(ii)}] $H^p(\Theta)$ is complete.
\end{enumerate}
\end{lemma}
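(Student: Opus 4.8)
The plan is to prove both parts by the standard recipe, reducing everything to the boundedness of the grand maximal function operator and a Fatou-type argument for $\mathcal S'$-limits. For part (i), I would fix a bounded set $\mathcal B$ in $\mathcal S$ (in the sense of the seminorms $\|\cdot\|_{N,\widetilde N}$) and show that $f \mapsto \sup_{\varphi\in\mathcal B}|\langle f,\varphi\rangle|$ is controlled by $\|f\|_{H^p(\Theta)}$; since the topology of $\mathcal S'$ is generated by such seminorms, this gives continuity of the inclusion. The key pointwise observation is that for $\varphi\in\mathcal S$, the value $\langle f,\varphi\rangle = f * \widetilde\varphi(0)$ (with $\widetilde\varphi(z):=\varphi(-z)$) can be written, after rescaling by some fixed ellipsoid $\theta_{0,t_0}$, as $(f*\psi_{0,t_0})(0)$ for a suitably normalized $\psi$; hence $|\langle f,\varphi\rangle| \lesssim_{\mathcal B} M^\circ f(x)$ for every $x$ in the fixed ellipsoid $\theta_{0,t_0}$, with the implied constant depending only on the seminorms bounding $\mathcal B$ and on $\mathbf p(\Theta)$. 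Integrating the $p$-th power over that ellipsoid (a set of fixed, finite measure) and using $\|M^\circ f\|_\infty^p \le |\theta_{0,t_0}|^{-1}\|M^\circ f\|_p^p$ on that set — more carefully, $|\langle f,\varphi\rangle|^p \le |\theta_{0,t_0}|^{-1}\int_{\theta_{0,t_0}}|M^\circ f|^p$ — yields $|\langle f,\varphi\rangle| \lesssim \|f\|_{H^p(\Theta)}$, which is exactly continuity of $H^p(\Theta)\hookrightarrow\mathcal S'$.

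For part (ii), completeness, I would take a Cauchy sequence $\{f_k\}$ in $H^p(\Theta)$. By part (i) it is Cauchy in $\mathcal S'$, hence converges to some $f\in\mathcal S'$. It remains to show $f\in H^p(\Theta)$ and $\|f_k - f\|_{H^p(\Theta)}\to 0$. The standard device: it suffices to show every Cauchy sequence has a convergent subsequence, so pass to a subsequence with $\|f_{k+1}-f_k\|_{H^p(\Theta)}^p \le 2^{-k}$, and telescope, writing $f = f_1 + \sum_k (f_{k+1}-f_k)$ with convergence in $\mathcal S'$. Then for any $\varphi\in\mathcal S_{N_p,\widetilde N_p}$, $x\in\rn$, $t\in\R$, by continuity of convolution against the fixed test function $\varphi_{x,t}$ on $\mathcal S'$ we get $(f - f_m)*\varphi_{x,t}(x) = \sum_{k\ge m}(f_{k+1}-f_k)*\varphi_{x,t}(x)$, so taking suprema over $\varphi$, $t$ gives the subadditivity estimate
\[
M^\circ(f - f_m)(x) \le \sum_{k\ge m} M^\circ(f_{k+1}-f_k)(x)
\qquad\text{for all }x\in\rn.
\]
Taking $L^p$-quasinorms and using the $p$-triangle inequality $\|\cdot\|_p^p$-subadditivity, $\|M^\circ(f-f_m)\|_p^p \le \sum_{k\ge m}\|M^\circ(f_{k+1}-f_k)\|_p^p \le \sum_{k\ge m}2^{-k} \to 0$. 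In particular $M^\circ(f-f_1)\in L^p$, hence $M^\circ f \le M^\circ(f - f_1) + M^\circ f_1 \in L^p$, so $f\in H^p(\Theta)$, and $\|f - f_m\|_{H^p(\Theta)}\to 0$; combined with the Cauchy property this gives convergence of the whole original sequence.

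The one genuinely delicate point — the main obstacle — is justifying the interchange in $(f - f_m)*\varphi_{x,t}(x) = \sum_{k\ge m}(f_{k+1}-f_k)*\varphi_{x,t}(x)$ and, more basically, making rigorous the claim that $g*\varphi_{x,t}(x) = \langle g, \varphi_{x,t}(x-\cdot)\rangle$ is a continuous linear functional of $g\in\mathcal S'$ for each fixed $(x,t,\varphi)$: this is true because $y\mapsto \varphi_{x,t}(x-y)$ is a fixed Schwartz function and convergence in $\mathcal S'$ is by definition testing against fixed Schwartz functions, so the partial sums $f_1 + \sum_{k<m}(f_{k+1}-f_k) \to f$ in $\mathcal S'$ forces $\big(f_1 + \sum_{k<m}(f_{k+1}-f_k)\big)*\varphi_{x,t}(x) \to f*\varphi_{x,t}(x)$ for each fixed $(x,t,\varphi)$. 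The subadditivity estimate for $M^\circ$ then follows by taking the supremum over the (separately handled) parameters after the limit, using that $\sup$ of a limit is $\le$ the corresponding $\limsup$ of sups; one should phrase this as: for each fixed $\varphi,t,x$, $|f*\varphi_{x,t}(x)| \le \liminf_m |f_m*\varphi_{x,t}(x)| + \sum_{k\ge 1}|(f_{k+1}-f_k)*\varphi_{x,t}(x)|$ is not quite what is needed — rather one directly estimates $|(f-f_m)*\varphi_{x,t}(x)| \le \sum_{k\ge m}|(f_{k+1}-f_k)*\varphi_{x,t}(x)| \le \sum_{k\ge m} M^\circ(f_{k+1}-f_k)(x)$, where the first inequality is the $\mathcal S'$-limit identity applied to the Cauchy-in-$\mathcal S'$ tail, and only afterward take $\sup_{\varphi,t}$. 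This is entirely routine once the $\mathcal S'$-continuity of $g\mapsto g*\varphi_{x,t}(x)$ is noted, and parallels the classical arguments for $H^p(\rn)$ (cf. \cite{fs72}) and the anisotropic case \cite{b}.
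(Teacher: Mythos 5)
Your proposal is correct and follows essentially the same route as the paper: the paper proves (i) by invoking the estimate $|\langle f,\phi\rangle|^p\le C\int_{\theta_{0,0}}(M^\circ f(x))^p\,dx$ (cited as \cite[Formula (5.8)]{dw}), which is exactly the rescaling-plus-averaging bound you derive, and proves (ii) by mimicking \cite[Proposition 3.12]{b}, which is precisely your Cauchy-subsequence telescoping argument with the pointwise subadditivity of $M^\circ$ and the $p$-triangle inequality. You simply supply the details that the paper outsources to these references, so no further changes are needed.
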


\begin{proof}
To prove (i), for any $\phi\in\cs$, by \cite[Formula (5.8)]{dw}, we have
$$|\langle f,\phi\rangle|^p\le C\int_{\theta_{0,\,0}}(M^\circ f(x))^pdx\le C\|f\|^p_{H^p(\Theta)},$$
which implies that the inclusion $H^p(\Theta)
\hookrightarrow\mathcal{S'}$ is continuous. Mimicking the proof of \cite[Proposition 3.12]{b}, we can show (ii).
\end{proof}

As in the classical case, the anisotropic Hardy spaces can be characterized and then investigated through atomic decompositions. The following Definitions \ref{d3.6} and \ref{d3.7} come from \cite[Definitions 4.1 and 4.2]{dpw}, respectively.

\begin{lemma}\label{l4.1}
Let $\Theta$ be a pointwise continuous ellipsoid cover and $\lambda>0$. Then,
\[
\Omega=\{x:\,M^\circ f(x)>\lambda\} \text{ is open.}
\]
\end{lemma}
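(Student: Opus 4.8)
The plan is to show that the complement of $\Omega$ is closed, i.e., that $M^\circ f$ is lower semicontinuous. Fix $\lambda > 0$ and suppose $x_0 \in \Omega$, so that $M^\circ f(x_0) > \lambda$. By the definition of the radial grand maximal function, there exist $\varphi \in \mathcal{S}_{N_p,\widetilde{N}_p}$ and $t\in\mathbb{R}$ such that $|f\ast\varphi_{x_0,t}(x_0)| > \lambda$. Explicitly, writing $\varphi_{x,t}(z) = |\det(M^{-1}_{x,t})|\,\varphi(M^{-1}_{x,t}z)$, this quantity is
\[
f\ast\varphi_{x,t}(x) = \big\langle f,\ |\det(M^{-1}_{x,t})|\,\varphi\big(M^{-1}_{x,t}(x-\cdot)\big)\big\rangle .
\]
The goal is to show that as a function of the center $x$ (with $\varphi$ and $t$ fixed), this is continuous at $x_0$; then for $x$ near $x_0$ we will still have $|f\ast\varphi_{x,t}(x)| > \lambda$, hence $M^\circ f(x) > \lambda$, proving $\Omega$ open.

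The key steps are as follows. First, fix the scale $t$ and the function $\varphi$, and consider the family of Schwartz functions $\psi_x(z) := |\det(M^{-1}_{x,t})|\,\varphi\big(M^{-1}_{x,t}(x - z)\big)$ indexed by $x$ near $x_0$. I would show that $x \mapsto \psi_x$ is continuous from a neighborhood of $x_0$ into $\mathcal{S}$ equipped with its usual topology (uniform convergence of all derivatives against all polynomial weights). This is where pointwise continuity of $\Theta$ enters: the hypothesis \eqref{e2.0} guarantees $\|M_{x,t} - M_{x_0,t}\| \to 0$ as $x\to x_0$, and since $M_{x_0,t}$ is invertible, the matrices $M_{x,t}$ stay invertible with $M^{-1}_{x,t} \to M^{-1}_{x_0,t}$ and $|\det M^{-1}_{x,t}| \to |\det M^{-1}_{x_0,t}|$ for $x$ in a small enough neighborhood (using continuity of matrix inversion and determinant). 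Combined with the translation by $x$ itself being continuous in $\mathcal{S}$, and the fact that precomposition with an affine map depending continuously on its parameters is continuous on $\mathcal{S}$ (a routine check on seminorms, using that all parameters range over a compact neighborhood so the estimates are uniform), one gets $\psi_x \to \psi_{x_0}$ in $\mathcal{S}$. Since $f\in\mathcal{S}'$ is continuous on $\mathcal{S}$, it follows that $x\mapsto \langle f,\psi_x\rangle = f\ast\varphi_{x,t}(x)$ is continuous at $x_0$.

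Then, given that $|f\ast\varphi_{x_0,t}(x_0)| > \lambda$, continuity yields $\delta>0$ such that $|f\ast\varphi_{x,t}(x)| > \lambda$ whenever $|x - x_0| < \delta$. For each such $x$ we have $M^\circ f(x) \geq M^\circ_\varphi f(x) \geq |f\ast\varphi_{x,t}(x)| > \lambda$, so $B(x_0,\delta) \subset \Omega$. Since $x_0\in\Omega$ was arbitrary, $\Omega$ is open. The main obstacle is the first step — verifying continuity of $x\mapsto\psi_x$ in the Schwartz topology — and the crux there is controlling the seminorms $\|\psi_x - \psi_{x_0}\|_{N,\widetilde N}$ uniformly; this needs the observation that for $x$ in a fixed small closed ball around $x_0$ the norms $\|M^{-1}_{x,t}\|$, $\|M_{x,t}\|$, and $|\det M^{-1}_{x,t}|$ are uniformly bounded above and below (by continuity on a compact set, or alternatively directly from the shape condition \eqref{e2.2} since all these centers $x$ lie in a common ellipsoid at some coarser scale), so that the change of variables and chain-rule estimates are uniform. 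I note that pointwise continuity of $\Theta$ is genuinely used here, which is exactly why the lemma is stated for pointwise continuous covers; by Theorem \ref{t2.1} this is no loss of generality up to equivalence of covers.
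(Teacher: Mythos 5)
Your proposal is correct and follows essentially the same route as the paper: the paper also reduces the claim to continuity of $x\mapsto f\ast\varphi_{x,t}(x)=\langle f,\tau_x D_{-(M_{x,t})^{-1}}\varphi\rangle$ at a fixed scale, which it obtains from continuity of $(x,M)\mapsto \tau_x D_M\varphi$ into $\mathcal{S}$ (verified via the seminorms $\|\cdot\|_{N,\widetilde N}$) composed with the pointwise continuity of $x\mapsto M_{x,t}$ and continuity of matrix inversion. Your handling of the determinant factor and the uniform seminorm bounds on a compact neighborhood is a fine (and slightly more explicit) rendering of the same argument.
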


\begin{proof}
Let $f\in\cs'$ and $\varphi\in \cs_{N,\,{\wz{N}}}$. It suffices to show that for any $t\in\mathbb{R}$,
\begin{equation}\label{cla} \R^n \ni x\mapsto f\ast\varphi_{x,t}=f(\tau_x\wz{\varphi}_{x,t})\ {\rm is\ a \ countinous \ function},
\end{equation}
where $\tau_x \varphi(z):=\varphi(z-x)$ and $\wz{\varphi}(x):=\varphi(-x)$ with $z\in\rn$. Indeed, for any $x\in \Omega$, there exist $\varphi\in\mathcal{S}_{N,\,\wz{N}}$ and $t_0\in \mathbb{R}$ such that
$$\lf|f\ast\varphi_{x,t_0}(x)\r|>\lambda.$$
By \eqref{cla} we deduce that for $x'$ in a sufficiently small neighborhood of $x$, $|f\ast\varphi_{x',t_0}(x')|>\lambda$. This implies that $x'\in \Omega$ and hence $\Omega$ is open.

To prove \eqref{cla} we need to use the following fact. For any $\varphi \in \mathcal S$ and $n\times n$ matrix $M$ define $D_M\varphi (z): = \varphi(Mz)$, $z\in \R^n$. Then for a fixed $\varphi \in \mathcal S$, the  mapping
\begin{equation}\label{cla2}
\R^n \times GL_n(\R) \ni (x,M) \mapsto \tau_x D_M \varphi \in \mathcal S
\end{equation}
is continuous. To show this, it suffices to estimate the semi-norms $||\cdot||_{N,\wz N}$ defining the Schwartz class $\mathcal S$. Precomposing the mapping \eqref{cla2} with continuous mapping
\[
\R^n \ni x \mapsto (x, -(M_{x,t})^{-1}) \in \R^n \times GL_n(\R)
\]
yields \eqref{cla}. Here, we also used the fact that the inverse map $M \mapsto M^{-1}$ is continuous on $GL_n(\R)$.
\end{proof}

\begin{definition}\label{d3.6}
For a continuous ellipsoid cover $\Theta$, we say that $(p,q,l)$ is {\it admissible} if $0<p\le 1\le q\le\fz$, $p<q$ and $l\in\nn_0$, such that $l\ge N_p(\Theta)$ with $N_p(\Theta)$ as in \eqref{e3.7}. A {\it $(p,q,l)$-atom} is a function $a:\,\rn\rightarrow\mathbb{R}$ such that
\begin{enumerate}
\item[(i)]$\supp a\subset\theta_{x,\,t}$ for some $\theta_{x,\,t}\in\Theta$, where $x\in\rn$ and $t\in\mathbb{R}$;
\item[(ii)] $\|a\|_q\le|\theta_{x,\,t}|^{\frac1q-\frac1p}$;
\item[(iii)] $\int_\rn a(y)y^\alpha dy=0$ for all $\alpha\in\nn_0^n$ such that $|\alpha|\le l$.
\end{enumerate}
\end{definition}

\begin{definition}\label{d3.7}
Let $\Theta$ be a continuous ellipsoid cover and $(p,q,l)$ an admissible triple as in Definition \ref{d3.6}. The {\it atomic Hardy space} $H^p_{q,\,l}(\Theta)$ associated with $\Theta$ is defined to be the set of all tempered distribution $f\in\mathcal{S'}$ of the form $f=\sum^\fz_{i=1}\lambda_i a_i$, where the series converges in $\cs'$, $\{\lambda_i\}_i\subset\ccc$, $\sum^\fz_{i=1}|\lambda_i|^p<\fz$, and $\{a_i\}_i$ are $(p,q,l)$-atoms.  Moreover, the quasi-norm of $f\in H^p_{q,\,l}(\Theta)$ is defined by
$$\|f\|_{H^p_{q,\,l}(\Theta)}:=\inf \lf [\lf(\sum_i|\lambda_i|^p \r)^{\frac1p}\r],$$
where the infimum is taken over all admissible decompositions of $f$ as above.
\end{definition}

The following theorem due to Dekel, Petrushev, and Weissblat \cite[Sections 4.1 and 4.3]{dpw} shows the atomic characterization of $H^p(\Theta)$.

\begin{theorem}\label{l3.8}
Let $\Theta$ be a continuous ellipsoid cover, $0<p\le 1\le q\le\fz$, $p<q$ and $N_p(\Theta)\le l\in\nn_0$ with $N_p(\Theta)$ as in \eqref{e3.7}. Then $H^p(\Theta)=H^p_{q,\,l}(\Theta)$ with equivalent quasi-norms.
\end{theorem}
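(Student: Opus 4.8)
The plan is to establish the two continuous inclusions $H^p_{q,\,l}(\Theta)\hookrightarrow H^p(\Theta)$ and $H^p(\Theta)\hookrightarrow H^p_{q,\,l}(\Theta)$ with matching quasi-norm bounds, for every admissible triple $(p,q,l)$. Since every $(p,\fz,l)$-atom is, by H\"older's inequality and the size condition, also a $(p,q,l)$-atom, it suffices to prove the first inclusion for all admissible $q$ and the second for $q=\fz$ (with any $l\ge N_p(\Theta)$); the circle $H^p(\Theta)\subset H^p_{\fz,\,l}(\Theta)\subset H^p_{q,\,l}(\Theta)\subset H^p(\Theta)$ then closes. For the first inclusion, by the $p$-subadditivity of the integral ($0<p\le1$), the monotone convergence theorem, and the continuity of $H^p(\Theta)\hookrightarrow\cs'$ from Lemma \ref{l5.1}, it is enough to show $\|a\|_{H^p(\Theta)}\le C$ uniformly over all $(p,q,l)$-atoms $a$, with $C=C(\mathbf p(\Theta),p,q,l)$. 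Fix $a$ with $\supp a\subset\theta_{x_0,\,t_0}$. On a fixed dilate of $\theta_{x_0,\,t_0}$ (say $\theta_{x_0,\,t_0-J}$, cf.\ Lemma \ref{l2.2}(i)) one dominates $M^\circ a$ by the Hardy--Littlewood maximal operator $M_\Theta$, whose $L^q$-boundedness together with H\"older and the size bound of Definition \ref{d3.6}(ii) yields $\int_{\theta_{x_0,\,t_0-J}}(M^\circ a)^p\le C$. Away from the atom, on the annuli $\theta_{x_0,\,t_0-(j+1)\gamma}\setminus\theta_{x_0,\,t_0-j\gamma}$, $j\in\nn_0$, from Lemma \ref{l4.8}, one subtracts the degree-$l$ Taylor polynomial of the test function at $x_0$ and uses the moment condition (iii) together with the shape condition \eqref{e2.2} (via Lemma \ref{l4.12}); this gives, for $\varphi\in\cs_{N_p,\,\wz N_p}$, a pointwise bound on $M^\circ a$ decaying geometrically in $j$, and the choice \eqref{e3.7} of $N_p$ is precisely what makes $\sum_j\int(M^\circ a)^p$ over these annuli finite. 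Adding the two contributions gives the uniform bound.

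For the converse, fix $f\in H^p(\Theta)$. By Theorem \ref{t2.1} we may assume $\Theta$ is pointwise continuous, so by Lemma \ref{l4.1} each superlevel set $\Omega_k:=\{x\in\rn:M^\circ f(x)>2^k\}$, $k\in\zz$, is open, with $|\Omega_k|<\fz$ since $2^{kp}|\Omega_k|\le\int_\rn(M^\circ f)^p<\fz$. The heart of the matter is the Calder\'on--Zygmund decomposition of $f$ at each height $2^k$: perform a Whitney-type covering of $\Omega_k$ by ellipsoids $\{\theta_i^k:=\theta_{x_i^k,\,t_i^k}\}$ adapted to $\rho_\Theta$, of bounded overlap, such that a fixed dilate of each $\theta_i^k$ lies in $\Omega_k$ while a larger fixed dilate meets $\Omega_k^\complement$; take a smooth partition of unity $\{\eta_i^k\}$ subordinate to $\{\theta_i^k\}$ and set $b_i^k:=(f-P_i^k)\eta_i^k$, where $P_i^k$ is the orthogonal projection of $f\eta_i^k/\int\eta_i^k$ onto the polynomials of degree $\le l$ with respect to the inner product $\langle g,h\rangle_i^k:=\int gh\,\eta_i^k/\int\eta_i^k$, so that $\int b_i^k\,y^\alpha\,dy=0$ for $|\alpha|\le l$. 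Writing $g_k:=f-\sum_i b_i^k$, one proves $\|g_k\|_\fz\lesssim 2^k$, $g_k\to f$ in $\cs'$ as $k\to\fz$, and $g_k\to0$ in $\cs'$ as $k\to-\fz$; telescoping gives $f=\sum_{k\in\zz}(g_{k+1}-g_k)$ in $\cs'$, and a standard rearrangement identifies $g_{k+1}-g_k=\sum_i\lambda_i^k a_i^k$, where each $a_i^k$ is a $(p,\fz,l)$-atom supported in a fixed dilate of $\theta_i^k$ and $\sum_{k,i}|\lambda_i^k|^p\lesssim\sum_k2^{kp}|\Omega_k|\sim\int_\rn(M^\circ f)^p=\|f\|_{H^p(\Theta)}^p$. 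This exhibits $f\in H^p_{\fz,\,l}(\Theta)$ with the required quasi-norm control.

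The principal obstacle is the Calder\'on--Zygmund decomposition itself, and inside it the pointwise control of the correcting polynomials: one must show $\sup_{\theta_i^k}|P_i^k|\lesssim 2^k$ so that $\lambda_i^k\sim2^k|\theta_i^k|^{1/p}$ and $a_i^k$ genuinely obeys the atomic size bound. This rests on a lemma estimating the coefficients of $P_i^k$ by an average of $|f|$ over a dilate of $\theta_i^k$, combined with the Whitney property that $M^\circ f\le 2^k$ at some nearby point and the comparability of the scales of overlapping Whitney ellipsoids, which comes from the shape condition \eqref{e2.2} and Lemma \ref{l2.2}. The other delicate point is the $\cs'$-convergence of the doubly indexed atomic series, uniformly in partial sums, which again uses the geometric decay away from $\Omega_k$. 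This scheme is carried out in \cite[Sections 4.1 and 4.3]{dpw}; the refinement that the decomposition additionally converges in $L^q(\rn)$ when $f\in H^p(\Theta)\cap L^q(\rn)$ is the subject of Section \ref{s4}.
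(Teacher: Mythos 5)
The paper gives no proof of Theorem \ref{l3.8} at all: it is quoted as due to Dekel, Petrushev, and Weissblat \cite[Sections 4.1 and 4.3]{dpw}, and your outline --- uniform $H^p(\Theta)$ bounds for atoms via the grand maximal function on a dilated ellipsoid plus decay on the annuli from Lemma \ref{l4.8}, and conversely the Calder\'on--Zygmund decomposition at every height $2^k$, telescoped and rearranged into $(p,\infty,l)$-atoms with $\sum_{k,i}|\lambda^k_i|^p\lesssim\|f\|^p_{H^p(\Theta)}$ --- is precisely the scheme of that cited proof, so you are taking essentially the same route as the paper. Your sketch is correct at the level of detail given; the only rough edges (the endpoint $q=1$, where the $L^q$-boundedness of $M_\Theta$ must be replaced by its weak type $(1,1)$ bound, and the claim $\|g_k\|_\infty\lesssim 2^k$, which for a general distribution $f\in H^p(\Theta)$ must be replaced by the grand-maximal estimate on $g_k$ as in Lemma \ref{l5.5}) are details handled in \cite{dpw}, to which you explicitly defer.
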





\section{Atomic decomposition of anisotropic Hardy spaces}\label{s4}

In this section we show technical improvements in the Calder\'on-Zygmund decomposition and the atomic decomposition of variable anisotropic Hardy space $H^p(\Theta)$, which were originally established by Dekel, Petrushev, and Weissblat in the form of Theorem \ref{l3.8}. While these are incremental improvements of results in \cite{dpw}, they play a crucial role in the proofs of Theorems \ref{t4.17} and \ref{t4.16} in Section \ref{s6}.

We recall the Calder\'{o}n-Zygmund decomposition established in \cite{dpw}.
Throughout this section we fix an ellipsoid cover $\Theta$ and we consider a tempered distribution $f$ such that for every $\lambda>0$, $|\{x:\,M^\circ f(x)>\lambda\}|<\fz$, where $M^\circ$ is the grand maximal function as in Definition \ref{d3.5}. We shall assume that $\Theta$ is pointwise continuous, that is \eqref{e2.0} holds.
This condition is implicitly used in \cite{dpw} to guarantee that the set
\begin{equation}\label{omega}
\Omega:=\{x:\,M^\circ f(x)>\lambda\}
\end{equation}
is open, see Lemma \ref{l4.1}. However, in light of Theorem \ref{t2.1} this assumption can be later removed, for example in the statement of Theorem \ref{l5.9}, since the Hardy spaces $H^p(\Theta)$ and $H^p(\Xi)$ corresponding to equivalent ellipsoid covers $\Theta$ and $\Xi$ are the same with equivalent quasi-norms.

By covering arguments \cite[Section 4.2]{dpw}, there exist sequences $\{x_i\}_{i\in\nn_0}\subset\Omega$ and $\{t_i\}_{i\in\nn_0}$, such that
\begin{equation}\label{e5.1}
\Omega=\bigcup_{i\in\nn_0}\theta_{x_i,\,t_i},
\end{equation}
\begin{equation}
\theta_{x_i,\,t_i+\gamma}\cap\theta_{x_j,\,t_j+\gamma}=\emptyset \qquad \forall\ i\neq j,
\end{equation}
\begin{equation}\label{e5.3}
\theta_{x_i,\,t_i-J-2\gamma}\cap\Omega^\complement=\emptyset \qquad \forall\ i\in\nn_0,
\end{equation}
\begin{equation}
\theta_{x_i,\,t_i-J-2\gamma-1}\cap\Omega^\complement\neq\emptyset \qquad \forall\ i\in\nn_0,
\end{equation}
where $J$ and $\gamma$ are as in Lemma \ref{l2.2}. Moreover, there exists a constant $L>0$ such that
\begin{equation}\label{e5.5}
\sharp\{j\in\nn_0:\,\theta_{x_j,\,t_j-J-\gamma}\cap\theta_{x_i,\,t_i-J-\gamma}\neq\emptyset\}\le L \qquad \forall\ i\in\nn_0,
\end{equation}
where $\sharp E$ denotes the cardinality of a set $E$.

Fix $\phi\in C^\fz$ such that $\supp\phi\subset 2\mathbb{B}^n$, $0\le\phi\le 1$ and $\phi\equiv 1$ on $\mathbb{B}^n$. For every $i\in\nn_0$, define $\wz{\phi_i}:=\phi(M^{-1}_{x_i,\,t_i}(x-x_i))$. Obviously, $\wz{\phi_i}\equiv 1$ on $\theta_{x_i,\,t_i}$. By Lemma \ref{l2.2}(i), we have $\supp{\wz \phi_i}\subset x_i+2M_{x_i,\,t_i}(\mathbb{B}^n) \subset\theta_{x_i,\,t_i-J}$. For
every $i\in\nn_0$, define
\begin{equation}\label{e5.6}
\phi_i(x):=
\lf\{\begin{array}{ll}\frac{\wz{\phi_i}(x)}{\sum_j\wz{\phi}_j(x)},\quad & {\rm if}\ x\in\Omega,\\
0,& {\rm if}\ x\notin\Omega.
\end{array}\r.
\end{equation}
Observe that $\phi_i$ is well defined since by \eqref{e5.1} and \eqref{e5.5}, $1\le\sum_i\wz{\phi_i}(x)\le L$ for every $x\in\Omega$. Also $\phi_i\in C^\fz$ and $\supp\phi_i\subset\theta_{x_i,\,t_i-J}$. By \eqref{e5.1} and \eqref{e5.6}, we have $\sum_i\phi_i(x)=\textbf{1}_\Omega(x)$, which implies that the family $\{\phi_i\}_{i\in\nn_0}$ forms a smooth partition of unitary subordinate to the cover of $\Omega$ by the ellipsoids $\{\theta_{x_i,\,t_i-J}\}_{i\in\nn_0}$.

Let $\mathcal{P}_l$ denote the space of polynomials of $n$ variables with degree $\le l$, where $N_p(\Theta)\le l$, see \eqref{e3.7}. For each $i\in\nn_0$ we introduce an Hilbert space structure on the space $\mathcal{P}_l$ by setting
\begin{equation}\label{e5.6a}
\langle P,Q\rangle_i:=\frac1{\int\phi_i}\int_\rn P(x)Q(x)\phi_i(x)dx\ \ \ {\rm for\ any}\ P,\,Q\in\mathcal{P}_l.
\end{equation}
The distribution $f\in\cs'$ induces a linear functional on $\mathcal{P}_l$ given by
$$
\mathcal{P}_l \ni Q\mapsto \langle f,Q\rangle_i.$$
By the Riesz Lemma it is represented by a unique polynomial $P_i\in\mathcal{P}_l$ such that
\begin{equation}\label{e5.7}
\langle f,Q\rangle_i=\langle P_i,Q\rangle_i \ \ \ {\rm for\ any}\ Q\in\mathcal{P}_l.
\end{equation}

\begin{definition}\label{d5.2}
For every $i\in\nn_0$, define the locally ``bad part'' $b_i:=(f-P_i)\phi_i$ and the ``good part'' $g:=f-\sum_ib_i$.
The representation $f=g+\sum_ib_i$, where $g$ and $b_i$ as above, is a Calder\'{o}n-Zygmund decomposition of degree $l$ and height $\lambda$ associated with $M^\circ$.
\end{definition}

We will use the following three results, which are \cite[Lemma 4.8, Lemma 4.11(ii), and Lemma 4.13]{dpw}, respectively. In particular, Lemma \ref{l5.4} guarantees convergence of $\sum_i b_i$.

\begin{lemma}\label{l5.3}
There exists a positive constant $C$ such that
$$\sup_{y\in\rn}|P_i(y)\phi_i(y)|\le C\lambda,$$
where $\phi_i$ and $P_i$ are defined in \eqref{e5.6} and \eqref{e5.7}, respectively.
\end{lemma}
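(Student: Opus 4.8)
The plan is to estimate the polynomial $P_i$ represented by the functional $Q \mapsto \langle f, Q \rangle_i$ via the Hilbert space structure \eqref{e5.6a}, exploiting the fact that $\phi_i$ is supported in $\theta_{x_i,\,t_i-J}$ together with the known bound on the grand maximal function on the complement $\Omega^\complement$. First I would recall that by \eqref{e5.3}--\eqref{e5.5} there is a point $z_i \in \theta_{x_i,\,t_i-J-2\gamma-1}\cap\Omega^\complement$, so that $M^\circ f(z_i) \le \lambda$; the ellipsoid $\theta_{x_i,\,t_i-J}$ is comparable (via Lemma \ref{l2.2}) to an ellipsoid centered at $z_i$ on a nearby scale, so testing $f$ against suitably normalized bump functions adapted to $\theta_{x_i,\,t_i-J}$ is controlled by $M^\circ f(z_i) \le \lambda$.

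The key steps, in order. (1) Reduce the sup over $y\in\rn$ to a sup over $y\in\theta_{x_i,\,t_i-J}$, since $\phi_i$ vanishes outside that ellipsoid. (2) On $\mathcal P_l$, all norms are equivalent; using the inner product $\langle\cdot,\cdot\rangle_i$ and the normalization $\int \phi_i \sim |\theta_{x_i,\,t_i-J}|$ (which holds because $\phi_i$ is a partition-of-unity piece with $\wz\phi_i \equiv 1$ on $\theta_{x_i,\,t_i}$ and $|\theta_{x_i,\,t_i}| \sim |\theta_{x_i,\,t_i-J}|$), one shows that for $P\in\mathcal P_l$,
\[
\sup_{y\in\theta_{x_i,\,t_i-J}} |P(y)\phi_i(y)| \le \sup_{y\in\theta_{x_i,\,t_i-J}}|P(y)| \le C\, \langle P,P\rangle_i^{1/2},
\]
by transplanting to the unit ball via the affine map $M_{x_i,\,t_i-J}$ and invoking finite-dimensionality. (3) Apply this with $P = P_i$ and use \eqref{e5.7}: $\langle P_i,P_i\rangle_i = \langle f, P_i\rangle_i$. (4) Expand $\langle f,P_i\rangle_i = \frac1{\int\phi_i}\langle f, P_i\phi_i\rangle$ and observe that $\psi := \frac{1}{\lambda\,|\theta_{x_i,\,t_i-J}|^{-1}}\cdot\frac{P_i\phi_i}{\int\phi_i}$, suitably rescaled and translated to $z_i$, is (up to a fixed constant) an element of $\mathcal S_{N,\wz N}$ adapted to an ellipsoid $\theta_{z_i,\,t}$ with $t$ within a bounded additive constant of $t_i-J$; hence $|\langle f, P_i\phi_i\rangle| \le C\lambda\,|\theta_{x_i,\,t_i-J}|\cdot \langle P_i,P_i\rangle_i^{1/2}$ by the definition of $M^\circ f(z_i) \le \lambda$. (5) Combining, $\langle P_i,P_i\rangle_i \le C\lambda\,\langle P_i,P_i\rangle_i^{1/2}$, so $\langle P_i,P_i\rangle_i^{1/2}\le C\lambda$, and then step (2) gives $\sup_y |P_i(y)\phi_i(y)|\le C\lambda$.

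The main obstacle is step (4): verifying that the rescaled, renormalized version of $P_i\phi_i$ genuinely lies in $\mathcal S_{N,\wz N}$ after transplanting it to be a bump adapted to an ellipsoid centered at the good point $z_i$. One must check that the derivatives of $P_i\phi_i$, once pulled back by $M_{x_i,\,t_i-J}$ and normalized by $|\theta_{x_i,\,t_i-J}|^{-1}$, are bounded by a constant depending only on $l$, $n$, and $\mathbf p(\Theta)$ — this uses that $\phi \in C^\infty_c(2\mathbb B^n)$ is fixed and that $P_i$, after rescaling, has $\mathcal P_l$-norm comparable to $\langle P_i,P_i\rangle_i^{1/2}$ (so the estimate is self-referential but closes). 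One also needs the shape condition \eqref{e2.2} to pass from an ellipsoid $\theta_{x_i,\,t_i-J}$ to a comparable ellipsoid $\theta_{z_i,\,t}$ at the nearby point $z_i$, controlling $\|M_{z_i,t}^{-1}M_{x_i,\,t_i-J}\|$ and its inverse by constants; this is where one must be careful that $N$ and $\wz N$ in the definition of $M^\circ$ are large enough (which is exactly why $N_p(\Theta)\le l$ is assumed in the setup, via \eqref{e3.7}). The remaining steps are routine linear algebra on the finite-dimensional space $\mathcal P_l$ and bookkeeping with the covering properties \eqref{e5.1}--\eqref{e5.5}.
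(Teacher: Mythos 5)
Your plan is essentially correct, but note that this paper never proves Lemma \ref{l5.3} itself: it is quoted verbatim from \cite[Lemma 4.8]{dpw}, so the comparison must be with that (standard) argument, which also appears in \cite[Chapter III]{s} and in \cite{b}. There one expands $P_i=\sum_{\alpha}\langle f,\pi_\alpha\rangle_i\,\pi_\alpha$ in an orthonormal basis $\{\pi_\alpha\}$ of $(\mathcal{P}_l,\langle\cdot,\cdot\rangle_i)$, shows the $\pi_\alpha$ are uniformly bounded (with all derivatives) on $\theta_{x_i,\,t_i-J}$ after affine normalization, and bounds each coefficient by $C\lambda$ because $\pi_\alpha\phi_i/\int\phi_i$, rescaled and centered at a nearby point $z_i\in\theta_{x_i,\,t_i-J-2\gamma-1}\cap\Omega^\complement$, is a fixed multiple of an admissible test function, so it is dominated by $M^\circ f(z_i)\le\lambda$. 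Your version replaces the basis expansion by testing $f$ directly against $P_i$ normalized by $\langle P_i,P_i\rangle_i^{1/2}$ and closing the bootstrap $\langle P_i,P_i\rangle_i=\langle f,P_i\rangle_i\le C\lambda\langle P_i,P_i\rangle_i^{1/2}$; this is legitimate (the quantity is finite and the case $P_i=0$ is trivial) and rests on exactly the same two ingredients: uniform norm equivalence on $\mathcal{P}_l$ after rescaling (which needs $\int\phi_i\sim|\theta_{x_i,\,t_i-J}|$ together with $\phi_i\gtrsim 1$ on $\theta_{x_i,\,t_i}$) and admissibility of the rescaled bump. So the route is a cosmetic variant of the cited proof rather than a genuinely different one.

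Three points should be repaired or made explicit. First, the normalizing factor in your $\psi$ is wrong as written: the test function must be normalized by the size of $P_i$, i.e.\ by $\langle P_i,P_i\rangle_i^{1/2}$ (times a geometric constant), not by $\lambda|\theta_{x_i,\,t_i-J}|^{-1}$; with your normalization $\|\psi\|_{N,\wz N}\le 1$ is not guaranteed, although your stated conclusion $|\langle f,P_i\phi_i\rangle|\le C\lambda|\theta_{x_i,\,t_i-J}|\langle P_i,P_i\rangle_i^{1/2}$ is the right one. Second, the admissibility check must also control the rescaled derivatives of $\phi_i$ itself, hence of the denominator $\sum_j\wz\phi_j$ in \eqref{e5.6}; this uses that every $\theta_{x_j,\,t_j-J}$ meeting $\theta_{x_i,\,t_i-J}$ has scale $t_j$ within a bounded distance of $t_i$, a property of the Whitney-type cover in \cite{dpw} that is not among the listed properties \eqref{e5.1}--\eqref{e5.5} and should be invoked explicitly. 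Third, your remark that one needs $N,\wz N$ large (via $N_p(\Theta)\le l$) is off-target for this lemma: any fixed $N,\wz N$ would do, since the constant is allowed to depend on them; the size restrictions on $N_p(\Theta)$ are needed elsewhere in the Calder\'on--Zygmund decomposition, not here.
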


\begin{lemma}\label{l5.4}
Suppose $f\in H^p(\Theta)$, $0<p\le 1$. Then the series $\sum_ib_i$ converges in $H^p(\Theta)$ and there exists a positive constant $C$, independent of $f$ and $i\in\nn_0$, such that
$$\int_\rn \lf[M^\circ\lf(\sum_ib_i\r)(x)\r]^pdx\le C\int_\Omega (M^\circ f(x))^pdx.$$
\end{lemma}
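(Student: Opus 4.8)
The plan is to estimate $M^\circ(\sum_i b_i)$ pointwise by splitting $\rn$ into the superlevel set $\Omega$ and its complement $\Omega^\complement$. On $\Omega$, since $\Omega = \bigcup_i \theta_{x_i,t_i}$ has finite measure, it suffices to bound $\int_\Omega [M^\circ(\sum_i b_i)(x)]^p\,dx$ by $C\int_\Omega (M^\circ f(x))^p\,dx$ using the $L^p$-boundedness of the maximal operator on the finite-measure set together with the fact that each $b_i = (f-P_i)\phi_i$ is controlled on $\theta_{x_i,t_i-J}$ by $M^\circ f$ (via \eqref{e3.6}) and by $\lambda$ (via Lemma \ref{l5.3}), and $\lambda \le M^\circ f(x)$ for $x\in\Omega$. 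The bounded overlap \eqref{e5.5} of the supports $\theta_{x_i,t_i-J}$ keeps the sum under control. The more delicate part is the estimate on $\Omega^\complement$, where one must exploit the vanishing moments of $b_i$ (note $\int b_i(y)Q(y)\,dy = \langle f,\phi_i Q\rangle - \langle P_i,Q\rangle_i \int\phi_i = 0$ for $Q\in\mathcal P_l$ by \eqref{e5.7}, so in particular $b_i$ has vanishing moments up to order $l\ge N_p(\Theta)$).

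First I would fix $x\in\Omega^\complement$, $\varphi\in\mathcal S_{N_p,\wz N_p}$, $t\in\R$, and estimate $|(\sum_i b_i)\ast\varphi_{x,t}(x)| \le \sum_i |b_i\ast\varphi_{x,t}(x)|$. For each $i$, I would use the moment cancellation of $b_i$: subtract the Taylor polynomial of $z\mapsto \varphi_{x,t}(x-z)$ of degree $l$ at the center of $\theta_{x_i,t_i}$, so that
\[
b_i\ast\varphi_{x,t}(x)=\int_\rn b_i(z)\Big[\varphi_{x,t}(x-z)-(\text{Taylor poly})\Big]\,dz.
\]
The Taylor remainder is controlled using the $C^{N_p}$ bounds on $\varphi$ together with the anisotropic geometry — the derivatives of $\varphi_{x,t}$ pick up factors of $\|M_{x,t}^{-1}\|$, while the size of $z$ minus the center is measured in the $M_{x,t}$-geometry. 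Since $x\in\Omega^\complement$, by \eqref{e5.3} $x\notin\theta_{x_i,t_i-J-2\gamma}$, which forces $\theta_{x,t}$ to be comparably large or $x$ to be anisotropically far from $\theta_{x_i,t_i-J}\supset\supp b_i$; this is quantified via the shape condition \eqref{e2.2} and Lemma \ref{l2.2}, and yields a geometric decay factor in the scale gap between $t$ and $t_i$, with exponent governed by $a_6$ and the regularity $N_p$. Combining with the size bound $\int|b_i| \le \int_{\theta_{x_i,t_i-J}}(|f|+|P_i|)\phi_i \lesssim \lambda|\theta_{x_i,t_i}|$ (using Lemma \ref{l5.3} and \eqref{e3.6}), and using that $|\theta_{x_i,t_i}|\sim 2^{-t_i}$, I would sum over $i$ against the geometric decay to obtain, for $x\in\Omega^\complement$,
\[
M^\circ\Big(\sum_i b_i\Big)(x)\le C\sum_i \lambda \Big(\frac{\rho_\Theta(x,x_i)^{-1}|\theta_{x_i,t_i}|}{\cdots}\Big)^{\text{small power}}\cdot(\cdots),
\]
where the exponents in \eqref{e3.7} are exactly what makes the resulting series integrable; raising to the $p$-th power and integrating over $\Omega^\complement$, the quasi-distance doubling (Proposition \ref{p2.4}) and the disjointness \eqref{e5.5} give $\int_{\Omega^\complement}[M^\circ(\sum_i b_i)]^p \lesssim \lambda^p\sum_i|\theta_{x_i,t_i}|\lesssim \lambda^p|\Omega| \lesssim \int_\Omega(M^\circ f)^p$, the last step by Chebyshev/definition of $\Omega$.

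The convergence of $\sum_i b_i$ in $H^p(\Theta)$ would follow by applying the same pointwise estimates to partial sums and tails: for a finite set $F\subset\nn_0$, the bound $\int_\rn[M^\circ(\sum_{i\notin F} b_i)]^p \le C\sum_{i\notin F}\int_{\theta_{x_i,t_i}}(M^\circ f)^p$ (plus the tail of the $\Omega^\complement$ series, which is also indexed by $i\notin F$) tends to $0$ as $F\uparrow\nn_0$, since $\sum_i\int_{\theta_{x_i,t_i}}(M^\circ f)^p \lesssim \int_\Omega(M^\circ f)^p<\infty$; completeness of $H^p(\Theta)$ from Lemma \ref{l5.1}(ii) then gives a limit, which must equal $\sum_i b_i$ in $\cs'$ by Lemma \ref{l5.1}(i). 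The main obstacle I expect is the $\Omega^\complement$ estimate: correctly tracking the anisotropic Taylor remainder and extracting the geometric decay in the scale gap $t-t_i$ from the shape condition, uniformly over the aperture-type variables and over $\varphi\in\mathcal S_{N_p,\wz N_p}$, is exactly the technical heart of the anisotropic Calderón–Zygmund machinery and is where the precise choices of $N_p(\Theta)$ and $\wz N_p(\Theta)$ in \eqref{e3.7} get used.
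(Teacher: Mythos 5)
A preliminary remark: the paper itself does not prove Lemma \ref{l5.4}; it quotes it from \cite[Lemma 4.11(ii)]{dpw}, whose proof runs through pointwise estimates for each individual $M^\circ b_i$ (\cite[Lemmas 4.9--4.10]{dpw}) and then sums. Measured against that argument, your sketch has a genuine gap in the ``local'' part. For $f\in H^p(\Theta)$ with $p\le 1$, $f$ is in general only a tempered distribution, so $b_i=(f-P_i)\phi_i$ is a distribution as well; the pointwise control of $b_i$ by $M^\circ f+C\lambda$ that you invoke via \eqref{e3.6} and Lemma \ref{l5.3} is simply not available here (\eqref{e3.6} requires $f\in L^1_{\loc}$ --- that is the hypothesis of Lemma \ref{l5.6}, not of this lemma). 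Moreover, even if $b_i$ were a function with $|b_i|\lesssim M^\circ f+\lambda$, this would not bound $M^\circ(\sum_i b_i)$ on $\Omega$: the grand maximal operator is not bounded on $L^p$ for $p\le 1$, and ``$L^p$-boundedness of the maximal operator on a finite-measure set'' is not a legitimate tool. What is actually needed is the pointwise estimate $M^\circ b_i(x)\le C\,M^\circ f(x)$ for $x\in\theta_{x_i,\,t_i-J}$, proved by showing that when a test function $\varphi_{x,t}$ is multiplied by the bump $\phi_i$ (with the correction $P_i\phi_i$ controlled by Lemma \ref{l5.3}), the product is, after suitable renormalization, again an admissible test function at the scale of $\theta_{x_i,t_i}$, so that pairing with the distribution $f$ is dominated by $M^\circ f(x)$. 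This normalization lemma is the technical heart of the local estimate and is absent from your proposal.

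The organization by $x\in\Omega$ versus $x\in\Omega^\complement$ is also not the right bookkeeping: for a fixed $x\in\Omega$ the series $\sum_i M^\circ b_i(x)$ contains infinitely many indices $i$ with $x$ far from $\theta_{x_i,\,t_i-J}$, and for those terms the vanishing-moment/decay estimate is needed just as on $\Omega^\complement$; the bounded overlap \eqref{e5.5} only controls the finitely many local terms. The correct scheme (as in \cite{dpw}) is per index $i$: $M^\circ b_i(x)\le C M^\circ f(x)$ on $\theta_{x_i,\,t_i-J}$ and $M^\circ b_i(x)\le C\lambda\,\nu^{-k_i(x)}$ off it, whence $\int_\rn (M^\circ b_i)^p\lesssim \int_{\theta_{x_i,\,t_i-J}}(M^\circ f)^p+\lambda^p|\theta_{x_i,\,t_i}|$, the $k$-series converging precisely because $\nu=2^{a_6JN_p}$ with $p\,a_6N_p>1$ by \eqref{e3.7}; summing in $i$ with \eqref{e5.5} and using $\lambda^p|\Omega|\le\int_\Omega(M^\circ f)^p$ gives the stated bound, and the tails of the same estimate together with completeness (Lemma \ref{l5.1}) give convergence in $H^p(\Theta)$, as you correctly indicate at the end. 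Your $\Omega^\complement$ sketch (Taylor expansion of $\varphi_{x,t}$ against the vanishing moments of $b_i$, with decay exponent governed by $a_6$ and $N_p$) is the right idea for the far-away estimate, but there too the size bound $\int|b_i|\lesssim\lambda|\theta_{x_i,\,t_i}|$ presupposes that $f$ is a function; for a general $f\in H^p(\Theta)$ the estimate must be routed through the maximal function of $f$ (i.e., through the definition of $P_i$ and normalized test functions), not through $|f|$ itself.
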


\begin{lemma}\label{l5.5}
Suppose $\sum_ib_i$ converges in $\cs'$. Then there exists a positive constant $C$, independent of $f\in\cs'$ and $\lambda>0$, such that
$$M^\circ g(x)\le C\lambda\sum_i\nu^{-k_i(x)}+M^\circ f(x)\textbf{1}_{\Omega^\complement}(x),$$
where $\nu:=2^{\,a_6JN}$ and
$$k_i(x):=
\lf\{\begin{array}{ll}
k,\ & x\in\theta_{x_i,\,t_i-J(k+2)}\setminus\theta_{x_i,\,t_i-J(k+1)}\
{\rm \ for\ some \ }\ k\in\nn_0,\\
0,& x\in\theta_{x_i,\,t_i-J}.
\end{array}\r.$$
\end{lemma}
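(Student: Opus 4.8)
The plan is to fix $\varphi\in\mathcal{S}_{N,\,\wz{N}}$ and $t\in\R$, estimate $g\ast\varphi_{x,\,t}(x)=f\ast\varphi_{x,\,t}(x)-\sum_i b_i\ast\varphi_{x,\,t}(x)$, and take the supremum over $\varphi$ and $t$ at the end. The core ingredient is a single-bad-part estimate,
\[
|b_i\ast\varphi_{x,\,t}(x)|\le C\lambda\,\nu^{-k_i(x)},\qquad \nu=2^{\,a_6 JN},
\]
valid (for all $\varphi\in\mathcal{S}_{N,\,\wz{N}}$, $t\in\R$) whenever $x\notin\Omega$, and also whenever $k_i(x)\ge1$. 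To prove it I would first record $\|b_i\|_1\lesssim\lambda|\theta_{x_i,\,t_i}|$: the $P_i\phi_i$-part is Lemma~\ref{l5.3}, while the $f\phi_i$-part follows because $\supp\phi_i\subset\theta_{x_i,\,t_i-J}$ sits only a bounded number of scales from $\Omega^\complement$ ($\theta_{x_i,\,t_i-J-2\gamma}\subset\Omega$ by \eqref{e5.3}, but $\theta_{x_i,\,t_i-J-2\gamma-1}$ meets $\Omega^\complement$), so the grand maximal function, being $\le\lambda$ at a point of that last ellipsoid, controls the $L^1$-average of $f$ over $\theta_{x_i,\,t_i-J}$. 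When $k_i(x)\ge1$, say $x\in\theta_{x_i,\,t_i-J(k+2)}\setminus\theta_{x_i,\,t_i-J(k+1)}$ with $k=k_i(x)$, I would use the vanishing moments $\int b_i(y)y^\alpha\,dy=0$, $|\alpha|\le l$ (a consequence of \eqref{e5.7}; the order of vanishing is $l\ge N$), to subtract from $y\mapsto\varphi_{x,\,t}(x-y)$ its degree-$l$ Taylor polynomial at $x_i$ and bound the remainder via $\varphi_{x,\,t}(\cdot)=|\det M^{-1}_{x,\,t}|\varphi(M^{-1}_{x,\,t}\cdot)$, the shape condition \eqref{e2.2}, and Lemma~\ref{l2.2}: using the shape-condition gain $2^{-a_6 J}$ per scale-gap raised to an order $\ge N$, crossing the $k$ scale-gaps of width $J$ that separate $\supp b_i$ from $x$ produces the factor $\nu^{-k}=(2^{-a_6 JN})^{k}$, once one has separately disposed of the ranges $2^{-t}\gg|\theta_{x_i,\,t_i}|$ (crude bound $\|b_i\|_1\|\varphi_{x,\,t}\|_\infty$) and $2^{-t}\ll|\theta_{x_i,\,t_i}|$ (rapid decay of $\varphi$). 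When $x\notin\Omega$ and $k_i(x)=0$ one only needs $|b_i\ast\varphi_{x,\,t}(x)|\lesssim\lambda$, which is the crude bound when $2^{-t}\gtrsim|\theta_{x_i,\,t_i}|$ and otherwise follows by writing $b_i\ast\varphi_{x,\,t}(x)=\langle f-P_i,\phi_i\varphi_{x,\,t}(x-\cdot)\rangle$, bounding the $P_i$-term by Lemma~\ref{l5.3}, and noting that $\phi_i\varphi_{x,\,t}(x-\cdot)$ equals a bounded multiple of $\psi_{x,\,t}(x-\cdot)$ for some $\psi\in\mathcal{S}_{N,\,\wz{N}}$ (the rescaled function being supported in a bounded ball, its seminorms are automatically under control), so the $f$-term is $\lesssim M^\circ f(x)\le\lambda$ since $x\notin\Omega$.

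Granting this, the case $x\notin\Omega$ is immediate: $|g\ast\varphi_{x,\,t}(x)|\le M^\circ f(x)+\sum_i|b_i\ast\varphi_{x,\,t}(x)|\le M^\circ f(x)+C\lambda\sum_i\nu^{-k_i(x)}$, and taking the supremum gives the claimed bound on $\Omega^\complement$.

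For $x\in\Omega$ the term $f\ast\varphi_{x,\,t}(x)$ is no longer controlled, so I would localise. By \eqref{e5.1} pick $i_0$ with $x\in\theta_{x_{i_0},\,t_{i_0}}$, put $\wz\theta:=\theta_{x_{i_0},\,t_{i_0}-J}$ (so $x\in\frac12\wz\theta\subset\wz\theta\subset\Omega$ by Lemma~\ref{l2.2} and \eqref{e5.3}), and let $I_1$ be a suitable finite set of ``nearby'' indices — finite by the bounded overlap \eqref{e5.5} of the covering — chosen so that $\sum_{i\in I_1}\phi_i\equiv1$ on $\wz\theta$ and $k_i(x)\ge1$ for every $i\notin I_1$. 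Then
\[
g\ast\varphi_{x,\,t}(x)=\Big(f-\sum_{i\in I_1}b_i\Big)\ast\varphi_{x,\,t}(x)-\sum_{i\notin I_1}b_i\ast\varphi_{x,\,t}(x),
\]
and the last sum is $\lesssim\lambda\sum_i\nu^{-k_i(x)}$ by the single-bad-part estimate. For the first term I would use the identity $f-\sum_{i\in I_1}b_i=\sum_{i\in I_1}P_i\phi_i+f\big(1-\sum_{i\in I_1}\phi_i\big)$ (legitimate since $I_1$ is finite). The function $\sum_{i\in I_1}P_i\phi_i$ is bounded by $C\lambda$ (Lemma~\ref{l5.3} and $\sharp I_1\lesssim1$), so its convolution with $\varphi_{x,\,t}$ at $x$ is $\lesssim\lambda\|\varphi\|_1\lesssim\lambda\le\lambda\sum_i\nu^{-k_i(x)}$, using $k_{i_0}(x)=0$. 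Finally $\Phi:=\big(1-\sum_{i\in I_1}\phi_i\big)\varphi_{x,\,t}(x-\cdot)$ is a Schwartz function vanishing on $\wz\theta$; decomposing $\rn\setminus\wz\theta$ into the $\gamma$-annuli of Lemma~\ref{l4.8} with a subordinate smooth partition of unity of bumps adapted to those annuli, I would write $\Phi=\sum_{j\ge0}\Phi_j$ and exhibit each $\Phi_j$ as $c_j\psi^{(j)}_{z,\,s_j}(z-\cdot)$, where $\psi^{(j)}\in\mathcal{S}_{N,\,\wz{N}}$, $z\in\Omega^\complement$ is the fixed point furnished by the fact that $\theta_{x_{i_0},\,t_{i_0}-J-2\gamma-1}$ meets $\Omega^\complement$, and $s_j$ is the base scale matched to the $j$-th annulus. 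Pairing with $f$, using $M^\circ f(z)\le\lambda$ and $\sum_j|c_j|\lesssim1$ (from the rapid decay built into $\mathcal{S}_{N,\,\wz{N}}$), gives $|\langle f,\Phi\rangle|\lesssim\lambda\le\lambda\sum_i\nu^{-k_i(x)}$. Adding the three contributions and taking the supremum over $\varphi,t$ completes the estimate on $\Omega$.

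I expect the main obstacle to be the single-bad-part estimate — carrying out the anisotropic Taylor expansion so that the matrices $M_{x,\,t}$, $M_{x_i,\,t_i}$, $M_{x_i,\,t_i-J(k+2)}$ all stay controlled under repeated use of \eqref{e2.2} and combine into exactly $\nu^{-k_i(x)}$ uniformly in $t$; the regime where $2^{-t}$ and $|\theta_{x_i,\,t_i}|$ are incomparable takes the most bookkeeping. The second delicate point is making the $x\in\Omega$ decomposition of $\langle f,\Phi\rangle$ rigorous: re-expressing the annular pieces of $\Phi$ as admissible (normalised, up to bounded constants) grand-maximal-function test functions based at a point of $\Omega^\complement$, with summable constants, is exactly where the Whitney structure \eqref{e5.1}--\eqref{e5.5} of the covering is used, together with the pointwise continuity of $\Theta$ that makes $\Omega$ open and the covering argument valid.
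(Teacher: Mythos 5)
You should first note a point of reference: the paper itself contains no proof of this lemma — it is quoted verbatim from \cite[Lemma 4.13]{dpw} (see the sentence preceding Lemma \ref{l5.3}) — so your proposal can only be measured against the standard argument of that source. Your overall architecture is the standard one: a per-index decay estimate producing the factor $\nu^{-k_i(x)}$, the trivial case $x\in\Omega^\complement$, and, for $x\in\Omega$, localization to finitely many nearby indices, Lemma \ref{l5.3} for the $\sum_{i\in I_1}P_i\phi_i$ part, and pairing $f\,(1-\sum_{i\in I_1}\phi_i)$ with renormalized test functions anchored at a point of $\Omega^\complement$ furnished by the Whitney conditions. (The selection of $I_1$ needs a small adjustment — with $I_1:=\{i:\theta_{x_i,t_i-J}\cap\wz\theta\ne\emptyset\}$ one does not automatically get $k_i(x)\ge1$ for $i\notin I_1$, but enlarging $I_1$ by the finitely many $i$ with $x\in\theta_{x_i,t_i-2J}$ repairs this; that is minor.)

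The genuine gap is the foundation of your ``single-bad-part estimate.'' You derive it from $\|b_i\|_1\lesssim\lambda|\theta_{x_i,t_i}|$, justified by the claim that $M^\circ f\le\lambda$ at a point of $\theta_{x_i,t_i-J-2\gamma-1}\cap\Omega^\complement$ controls the $L^1$-average of $f$ over $\theta_{x_i,t_i-J}$. This fails twice. First, in this lemma $f$ is an arbitrary element of $\cs'$ (the constant must be independent of $f\in\cs'$; the hypothesis $f\in L^q$ enters only in Lemma \ref{l5.6}), so $b_i=(f-P_i)\phi_i$ need not be a function and $\|b_i\|_1$ is undefined. Second, even for $f\in L^1_{\loc}$ the claim is false: $M^\circ f(z)\le\lambda$ at a single nearby point $z$ bounds only smooth pairings $f\ast\varphi_{z,s}(z)$, not $\int_{\theta_{x_i,t_i-J}}|f|$; an oscillating $f$ concentrated on $\theta_{x_i,t_i-J}$ (which lies inside $\Omega$, where nothing bounds $M^\circ f$) can have arbitrarily large $L^1$-norm there while every admissible smooth average at $z$ stays below $\lambda$. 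Consequently the Taylor-remainder argument against the vanishing moments of $b_i$, and the ``crude bound $\|b_i\|_1\|\varphi_{x,t}\|_\infty$'' in the coarse-scale regime, cannot be run as stated. The repair is to use, also for the single-bad-part estimate, exactly the mechanism you invoke in the $x\in\Omega$ step: write $b_i\ast\varphi_{x,t}(x)=\la f,\phi_i\,\varphi_{x,t}(x-\cdot)\ra-\la P_i\phi_i,\varphi_{x,t}(x-\cdot)\ra$, bound the second term by Lemma \ref{l5.3}, and bound the first by renormalizing $\phi_i(\cdot)\varphi_{x,t}(x-\cdot)$ into an admissible test function at a scale comparable to $t_i$ centered at the point of $\Omega^\complement$ in $\theta_{x_i,t_i-J-2\gamma-1}$; the factor $\nu^{-k_i(x)}$ then comes from the coarse/fine case analysis in $t$ (the size $|\det M_{z,s}|\,\|\varphi_{x,t}\|_\infty$ together with shape-condition losses on derivatives in the coarse regime, the $(1+|\cdot|)^{-\wz N}$ decay of $\varphi$ across the $k$ scale gaps in the fine regime), with no use of $\|b_i\|_1$ or of the moment conditions on $b_i$. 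With that replacement the rest of your outline goes through along the lines of the cited proof.
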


The following two lemmas are extensions of \cite[Lemmas 4.12 and 4.14]{dpw} from the setting of $L^1$ to $L^q$ spaces, $1< q<\infty$. At the same time, these results are extensions of \cite[Lemmas 4.8 and 4.10]{blyz} to the variable anisotropic setting (albeit without weights).

\begin{lemma}\label{l5.6}
If $f\in L^q$ with $1\le q<\fz$. Then $\sum_ib_i$ converges in $L^q$.
Moreover, there exists a positive constant $C$, independent of $f$ and $i$, such that $\|\sum_i|b_i|\|_{q}\le C\|f\|_{q}.$
\end{lemma}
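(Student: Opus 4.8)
The plan is to adapt the argument from \cite[Lemma 4.12]{dpw}, where the case $q=1$ is essentially treated, by tracking $L^q$ norms instead of $L^1$ norms and invoking $L^q$-boundedness of the Hardy--Littlewood maximal operator. First I would reduce the claim to the key pointwise bound $\sum_i |b_i(x)| \le C\, (M_\Theta f(x) + |f(x)|\mathbf 1_\Omega(x))$, valid for a.e.\ $x$. Indeed, $b_i = (f - P_i)\phi_i$ and, since $\{\phi_i\}$ is a partition of unity on $\Omega$ with bounded overlap $L$ (by \eqref{e5.5}), one has $\sum_i |f\phi_i| \le L|f|\mathbf 1_\Omega$ pointwise. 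For the polynomial part, the goal is to show $\sum_i |P_i(x)\phi_i(x)| \le C\, M_\Theta f(x)$ on $\Omega$; this is the analogue of how Lemma \ref{l5.3} gives the $\le C\lambda$ bound, but now one needs the sharper estimate in terms of $M_\Theta f$ rather than $\lambda$. The standard route is to use the reproducing/normalization property of $P_i$ coming from \eqref{e5.6a}--\eqref{e5.7}, together with the fact that on $\mathcal P_l$ all norms are equivalent after rescaling by $M_{x_i,t_i}$, to obtain $\sup_{y}|P_i(y)\phi_i(y)| \le \tfrac{C}{|\theta_{x_i,t_i-J}|}\int_{\theta_{x_i,t_i-J}} |f(y)|\,dy$. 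Because each such ellipsoid $\theta_{x_i,t_i-J}$ is, by the Calder\'on--Zygmund construction \eqref{e5.3} and Lemma \ref{l2.2}, comparable to an ellipsoid $\theta_{z,s}$ with $z \in \Omega^\complement$, the average on the right is dominated by $\inf_{z\in \theta_{x_i,t_i-J}\cap\Omega^\complement} M_\Theta f(z)$ when that set is nonempty, and in any case by $M_\Theta f(x)$ for $x$ in the (boundedly overlapping) supports. Summing over the $\le L$ indices $i$ with $x\in \supp \phi_i$ gives the pointwise bound.

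Granting the pointwise bound, I would finish as follows. By \eqref{l32} and the Fefferman--Stein / Hardy--Littlewood maximal theorem on $\rn$ (which applies since $\rho_\Theta$-balls have Lebesgue measure $\sim r$ by Proposition \ref{p2.4}, so $\rn$ with $\rho_\Theta$ is a doubling space and $M_{B_{\rho_\Theta}}$, hence $M_\Theta$, is bounded on $L^q$ for $1<q<\infty$), we get
\[
\Big\| \sum_i |b_i| \Big\|_q \le C\big( \|M_\Theta f\|_q + \|f\|_q \big) \le C \|f\|_q.
\]
For $q=1$ the maximal operator is not bounded on $L^1$, so there I would instead argue directly: $\int_\rn \sum_i|b_i| \le L\int_\Omega|f| + \sum_i \sup|P_i\phi_i|\,|\theta_{x_i,t_i-J}|$, and bound the second sum using $\sup|P_i\phi_i| \le \tfrac{C}{|\theta_{x_i,t_i-J}|}\int_{\theta_{x_i,t_i-J}}|f|$ together with the bounded overlap \eqref{e5.5} of the ellipsoids $\{\theta_{x_i,t_i-J}\}$, getting $\sum_i \int_{\theta_{x_i,t_i-J}}|f| \le L' \int_\Omega |f| \le L'\|f\|_1$; this recovers the $q=1$ case and matches \cite{dpw}. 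Convergence of $\sum_i b_i$ in $L^q$ then follows from the dominated convergence / completeness argument: the partial sums are Cauchy because $\|\sum_{i\in F}|b_i|\|_q \to 0$ as $\min F \to \infty$, using that $\sum_i|b_i| \in L^q$ and the supports have bounded overlap, so the tails are controlled by $\|(\sum_i|b_i|)\mathbf 1_{\bigcup_{i\ge m}\supp\phi_i}\|_q \to 0$.

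The main obstacle I expect is establishing the refined pointwise estimate $\sum_i|P_i(x)\phi_i(x)| \le C\,M_\Theta f(x)$, i.e.\ upgrading Lemma \ref{l5.3}'s $\lambda$-bound to a maximal-function bound. This requires care because $P_i$ is defined only through its action against polynomials via the weighted inner product \eqref{e5.6a}, so one must pass from $\langle f, Q\rangle_i = \langle P_i, Q\rangle_i$ to a genuine pointwise bound on $P_i$ on $\supp\phi_i$; the device is to choose a dual basis of $\mathcal P_l$ with respect to $\langle\cdot,\cdot\rangle_i$, controlled uniformly after the affine change of variables $y \mapsto x_i + M_{x_i,t_i}y$ (here the doubling/comparability of the relevant ellipsoids from Lemma \ref{l2.2} and the volume normalization \eqref{e2.1} are essential), and then to recognize the resulting average of $|f|$ over $\theta_{x_i,t_i-J}$ as being $\lesssim M_\Theta f(x)$ for $x\in\supp\phi_i\subset\theta_{x_i,t_i-J}$. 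Once this is in hand, everything else is routine summation exploiting the finite-overlap property \eqref{e5.5}.
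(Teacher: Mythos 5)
Your proposal is correct in substance but takes a genuinely different route from the paper. The paper never upgrades Lemma \ref{l5.3}: it writes $\int|b_i|^q\lesssim\int_{\theta_{x_i,t_i-J}}|f|^q+\lambda^q|\theta_{x_i,t_i-J}|$ using the crude bound $|P_i\phi_i|\le C\lambda$, sums over $i$ using the bounded overlap \eqref{e5.5} and $\Omega=\bigcup_i\theta_{x_i,t_i-J}$, controls $\lambda^q|\Omega|$ by Chebyshev (since $\Omega=\{M^\circ f>\lambda\}$) together with the $L^q$-boundedness of the grand maximal operator $M^\circ$ for $q>1$, and finally passes from $\sum_i\|b_i\|_q^q$ to $\|\sum_i|b_i|\|_q^q$ by H\"older and \eqref{e5.5}; the case $q=1$ is simply quoted from \cite[Lemma 4.12]{dpw}. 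You instead prove the pointwise domination $\sum_i|b_i|\lesssim M_\Theta f+|f|\mathbf 1_\Omega$ via the refined projection estimate $\sup_y|P_i(y)\phi_i(y)|\le \frac{C}{|\theta_{x_i,t_i-J}|}\int_{\theta_{x_i,t_i-J}}|f|$ and then invoke the $L^q$-boundedness of the Hardy--Littlewood maximal operator $M_\Theta$. This works: for $f\in L^1_{\loc}$ the refined estimate is provable exactly as you sketch (after the affine pullback the weight $\phi_i$ is $\ge 1/L$ on the image of $\theta_{x_i,t_i}$, which contains a ball of fixed radius by \eqref{e2.2}, so the orthonormal bases of $\mathcal{P}_l$ for $\langle\cdot,\cdot\rangle_i$ are uniformly bounded on $\supp\phi_i$; compare Lemma \ref{l6.2} for the unweighted projections), and for $x\in\supp\phi_i$ one has $\theta_{x_i,t_i-J}\subset\theta_{x,t_i-J-\gamma}$ by Lemma \ref{l2.2}(ii), so the average is indeed $\lesssim M_\Theta f(x)$. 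Your route buys a sharper pointwise bound and a uniform treatment of $q=1$ and $q>1$, at the price of the extra work on weighted projections, which Lemma \ref{l5.3} lets the paper avoid (and which also explains why Lemma \ref{l5.3} works for distributions while your bound needs $f$ locally integrable --- harmless here). Note that for $q>1$ you could bypass the refined bound entirely: on $\Omega$ one has $\lambda<M^\circ f(x)$, so Lemma \ref{l5.3} already gives $\sum_i|P_i(x)\phi_i(x)|\le CL\,M^\circ f(x)$, and the $L^q$-boundedness of $M^\circ$ finishes the argument; this is essentially the paper's mechanism. Two harmless slips: by \eqref{e5.3} the sets $\theta_{x_i,t_i-J}\cap\Omega^\complement$ are actually empty, so your clause about taking an infimum over that set is vacuous (the fallback bound by $M_\Theta f(x)$ is the one that matters); and in the convergence step the clean justification is that $\sum_{i\ge m}|b_i|\le\big(\sum_i|b_i|\big)\mathbf 1_{\bigcup_{i\ge m}\supp\phi_i}\to 0$ pointwise by the finite overlap, so dominated convergence with majorant $\sum_i|b_i|\in L^q$ gives the Cauchy property in $L^q$, which is what your sketch intends.
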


\begin{proof}
When $q=1$, this result was shown in \cite[Lemma 4.12]{dpw}. Hence, we only need to consider the case $1<q<\fz$.

When $1<q<\fz$, from $b_i=(f-P_i)\phi_i$ and Lemma \ref{l5.3}, it follows that
\begin{align}\label{e5.8}
\int_\rn|b_i(x)|^qdx
&=\int_\rn|(f(x)-P_i(x))\phi_i(x)|^qdx\\
&\le 2^{q-1} \lf( \int_{\theta_{x_i,\,t_i-J}}|f(x)\phi_i(x)|^qdx+\int_{\theta_{x_i,\,t_i-J}}|P_i(x)\phi_i(x)|^qdx \r) \nonumber\\
&\le C\lf(\int_{\theta_{x_i,\,t_i-J}}|f(x)|^qdx+\lambda^q|\theta_{x_i,\,t_i-J}|\r).\nonumber
\end{align}
For the set $\Omega$ as in \eqref{e5.1}, by Lemma \ref{l2.2}, \eqref{e5.1}, and \eqref{e5.3} we have
\begin{equation}\label{e5.9}
\Omega=\bigcup_{i\in\nn_0}\theta_{x_i,\,t_i-J}.
\end{equation}
Therefore, by \eqref{e5.5}, \eqref{e5.8}, \eqref{e5.9}, and the $L^q$ boundedness of $M^\circ$ (see \cite[Theorem 3.8]{dpw}), we have
\begin{align*}
\sum_i\int_\rn|b_i(x)|^qdx
&\le C\lf(\sum_i\lf(\int_{\theta_{x_i,\,t_i-J}}|f(x)|^qdx+
\lambda^q|\theta_{x_i,\,t_i-J}|\r)\r)\nonumber\\
&\le CL\lf(\int_\Omega|f(x)|^qdx+\lambda^q|\Omega|\r)\\
&\le CL\lf(\int_\Omega|f(x)|^qdx+\int_\rn (M^\circ f(x))^qdx\r)\nonumber\\
&\ls \int_\rn|f(x)|^qdx\nonumber.
\end{align*}
Since $\supp b_i\subset\supp\phi_i\subset\theta_{x_i,\,t_i-J}$, by H\"{o}lder's inequality and \eqref{e5.5}, we deduce that
\begin{align*}
\lf(\int_\rn\lf(\sum_i\lf|b_i(x)\r|\r)^qdx\r)^{\frac1{q}}
&\le\lf(\int_\rn\lf(\sum_i\lf|b_i(x)\textbf{1}_{\theta_{x_i,\,t_i-J}}(x)\r|\r)^qdx\r)^{\frac1{q}}\\
&\le\lf\{\int_\rn\lf[\lf(\sum_i|b_i(x)|^q\r)^{\frac1q}\lf(\sum_i|\textbf{1}_{\theta_{x_i,\,t_i-J}}(x)|^{q'}\r)^{\frac 1{q'}}\r]^qdx\r\}^{\frac1{q}}\\
&\le L^{1-\frac1q}\lf(\sum_i\int_\rn|b_i(x)|^qdx\r)^{\frac1{q}}\\
&\le C\lf(\int_\rn|f(x)|^qdx\r)^{\frac1{q}},
\end{align*}
where $1/q+1/q'=1$.
\end{proof}

\begin{lemma}\label{l5.7}
If $M^\circ f\in L^p$ with $0<p\le 1$, then $M^\circ g\in \bigcap_{1\le q<\fz}L^q$. Moreover, there exists a positive constant $C_1$, independent of $f$ and $\lambda$, such that, when $1\le q<\fz$,
$$\int_\rn (M^\circ g(x))^qdx\le C_1\lambda^{q-p}\int_\rn (M^\circ f(x))^pdx.$$

If $f\in L^q$ with $1\le q<\fz$, then $g\in L^\fz$ and there exists a positive
constant $C_2$, independent of $f$ and $\lambda$, such that
\begin{equation*}
\|g\|_{\fz}\le C_2\lambda.
\end{equation*}
\end{lemma}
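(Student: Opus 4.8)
The plan is to prove the two assertions separately, beginning with the $L^\fz$ bound, which is the more elementary one.

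\emph{The bound $\|g\|_\fz\le C_2\lambda$ when $f\in L^q$.} Since $f\in L^q$, Lemma \ref{l5.6} guarantees that $\sum_i b_i$ converges in $L^q$ and so defines an honest function. Because $\supp\phi_i\subset\theta_{x_i,\,t_i-J}\subset\Omega$ and $\sum_i\phi_i=\textbf{1}_\Omega$, this sum is locally finite and
\[
\sum_i b_i=\sum_i(f-P_i)\phi_i=f\,\textbf{1}_\Omega-\sum_i P_i\phi_i\quad\text{a.e.},
\]
so that $g=f\,\textbf{1}_{\Omega^\complement}+\sum_i P_i\phi_i$. On $\Omega^\complement$, \eqref{e3.6} and the definition \eqref{omega} of $\Omega$ give $|g|=|f|\le M^\circ f\le\lambda$ a.e. On $\Omega$, the inclusion $\supp\phi_i\subset\theta_{x_i,\,t_i-J-\gamma}$ and the bounded overlap \eqref{e5.5} show that at each point at most $L$ of the terms $P_i\phi_i$ are nonzero, and each obeys $|P_i\phi_i|\le C\lambda$ by Lemma \ref{l5.3}; hence $|g|\le CL\lambda$ there. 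Combining the two regions yields $\|g\|_\fz\le C_2\lambda$.

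\emph{The bound on $M^\circ g$ in $L^q$.} Now assume only $M^\circ f\in L^p$, so $f\in H^p(\Theta)$; by Lemma \ref{l5.4} the series $\sum_i b_i$ converges in $H^p(\Theta)$, hence in $\cs'$, and Lemma \ref{l5.5} applies. Fix $q\in[1,\fz)$. Raising the pointwise bound of Lemma \ref{l5.5} to the $q$-th power and integrating gives
\[
\int_\rn(M^\circ g)^q\ \lesssim\ \lambda^q\int_\rn\Bigl(\sum_i\nu^{-k_i(x)}\Bigr)^q dx+\int_{\Omega^\complement}(M^\circ f)^q .
\]
Since $M^\circ f\le\lambda$ on $\Omega^\complement$ and $q\ge p$, the last integral is at most $\lambda^{q-p}\int_\rn(M^\circ f)^p$. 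Thus the whole estimate reduces to the geometric inequality
\[
\int_\rn\Bigl(\sum_i\nu^{-k_i(x)}\Bigr)^q dx\ \lesssim\ |\Omega| ,
\]
because then Chebyshev's inequality $|\Omega|\le\lambda^{-p}\int_\rn(M^\circ f)^p$ turns the first term into $\lesssim\lambda^{q-p}\int_\rn(M^\circ f)^p$ as well, giving $\int_\rn(M^\circ g)^q\le C_1\lambda^{q-p}\int_\rn(M^\circ f)^p<\fz$; since $q$ was arbitrary, $M^\circ g\in\bigcap_{1\le q<\fz}L^q$.

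\emph{The geometric inequality and the main obstacle.} For $q=1$ this is straightforward: \eqref{e2.1} together with $\nu=2^{a_6JN}>2^J$ (which holds since $a_6N_p>1$ by \eqref{e3.7}) yields $\int_\rn\nu^{-k_i(x)}\,dx\lesssim|\theta_{x_i,\,t_i}|$ for each $i$ by summing a geometric series in $k$, and then the disjointness of $\{\theta_{x_i,\,t_i+\gamma}\}$ inside $\Omega$ combined with \eqref{e2.1} gives $\sum_i|\theta_{x_i,\,t_i}|\lesssim|\Omega|$; this recovers \cite[Lemma 4.14]{dpw} in the present notation. For $q>1$ one argues along the lines of \cite[Lemma 4.10]{blyz}: write $\sum_i\nu^{-k_i(x)}=\sum_{k\ge0}\nu^{-k}\,\sharp\{i:k_i(x)=k\}$ and control the $L^q$ norm of each level-set counting function by combining the $L^1$ bound just obtained with the overlap and volume comparison properties \eqref{e5.5}, \eqref{e2.2}, \eqref{e2.1} of the dilated ellipsoids $\theta_{x_i,\,t_i-J(k+2)}$, and then sum the resulting series in $k$. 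That series converges precisely because $\nu=2^{a_6JN}$, with $N=N_p$ fixed through \eqref{e3.7}, is large enough to dominate the growth of the overlap of the dilated ellipsoids as $k\to\fz$. This $k$-summation, with the correct dependence of $\nu$ on the parameters $\mathbf{p}(\Theta)$, is the technical heart of the argument; once it is in place, the remaining steps are routine.
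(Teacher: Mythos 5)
Your reduction of the problem and your treatment of the easy parts match the paper: the identity $g=f\,\textbf{1}_{\Omega^\complement}+\sum_iP_i\phi_i$ together with Lemma \ref{l5.3}, \eqref{e5.5}, and \eqref{e3.6} gives $\|g\|_\fz\ls\lambda$ exactly as in the paper's proof, and the passage from Lemma \ref{l5.5} to the two terms $\lambda^q\int(\sum_i\nu^{-k_i})^q$ and $\int_{\Omega^\complement}(M^\circ f)^q$, plus the Chebyshev bound $|\Omega|\le\lambda^{-p}\int(M^\circ f)^p$, is also the paper's route. Your $q=1$ computation (geometric series in $k$ using $\nu>2^J$, then disjointness of the $\theta_{x_i,\,t_i+\gamma}$) is correct and recovers \cite[Lemma 4.14]{dpw}.

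However, the case $1<q<\fz$ --- which is the entire new content of this lemma --- is not proved. You write $\sum_i\nu^{-k_i(x)}=\sum_{k\ge0}\nu^{-k}\,\sharp\{i:k_i(x)=k\}$ and assert that the $L^q$ norms of the counting functions can be controlled ``by combining the $L^1$ bound with the overlap and volume comparison properties'' and that the resulting series in $k$ converges because $\nu$ is large enough; but the needed estimate is precisely a bound, sufficiently slowly growing in $k$, on the overlap of the \emph{dilated} ellipsoids $\theta_{x_i,\,t_i-J(k+2)}$, and this is the nontrivial point. The bounded-overlap property \eqref{e5.5} concerns only the undilated ellipsoids; in the variable anisotropic setting the dilates have varying shapes and scales, the number of them containing a fixed point genuinely grows with $k$, and one must quantify this growth (via the Whitney-type properties \eqref{e5.3}--\eqref{e5.5} and the shape condition \eqref{e2.2}) and then verify that the exponent is beaten by $\nu=2^{a_6JN}$ with $N=N_p$ from \eqref{e3.7}. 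None of this is carried out, so the ``technical heart'' you identify is left as a gap; moreover \cite[Lemma 4.10]{blyz}, which you invoke as the model, does not proceed by level-set counting. The paper sidesteps the counting problem entirely: from $x\in\theta_{x_i,\,t_i-J(k+2)}\setminus\theta_{x_i,\,t_i-J(k+1)}$ it deduces the pointwise bound $2^{-kJ}\ls M_{B_{\rho_\Theta}}(\textbf{1}_{\theta_{x_i,\,t_i}})(x)$, writes $\nu^{-k_i(x)}=\bigl(2^{-k_i(x)J}\bigr)^{a_6N}$, and applies the Fefferman--Stein vector-valued maximal inequality on $L^{a_6Nq}(\ell^{a_6N})$ (valid since $a_6N>1$, see \cite[Section 6.6]{ggkk}), after which only the finite overlap \eqref{e5.5} of the undilated $\theta_{x_i,\,t_i}$ is needed to conclude $\int(\sum_i\nu^{-k_i})^q\ls|\Omega|$. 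To complete your proof you should either carry out the overlap estimate for the dilated ellipsoids with explicit dependence on $k$ and check it against \eqref{e3.7}, or adopt the vector-valued maximal function argument.
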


\begin{proof}
When $q=1$, this result was shown in \cite[Lemma 4.14]{dpw}. Hence, we only need to consider the case $1<q<\fz$.

When $1<q<\fz$, by Lemma \ref{l5.5}, we obtain
\begin{align}\label{e5.10}
\int_\rn [M^\circ g(x)]^qdx
&\le C\lf[\lambda^q\int_\rn\lf(\sum_i\nu^{-k_i(x)}\r)^qdx+\int_{\Omega^\complement}(M^\circ f(x))^qdx\r],
\end{align}
where $k_i(x)$ is as in Lemma \ref{l5.5}. For any $x\in\theta_{x_i,\,t_i-J(k+2)}\setminus\theta_{x_i,\,t_i-J(k+1)}$ with $k\in\nn_0$, by \eqref{l32}, we have
\begin{align*}2^{-kJ}&\le C\frac1{|\theta_{x_i,\,t_i-J(k+2)}|}\int_{\theta_{x_i,\,t_i-J(k+2)}}
\textbf{1}_{\theta_{x_i,\,t_i}}(y)dy\\
&\le C M_{\Theta}(\textbf{1}_{\theta_{x_i,\,t_i}})(x)\sim M_{B_{\rho_\Theta}}(\textbf{1}_{\theta_{x_i,\,t_i}})(x),
\end{align*}
where $M_{B_{\rho_\Theta}}$ and $M_\Theta$ are as in \eqref{e3.1} and \eqref{e3.2}, respectively.

From the Fefferman-Stein boundedness of the vector-valued maximal function, see \cite[Section 6.6]{ggkk}, $M_{B_{\rho_\Theta}}$ is bounded on  $L^{a_6Nq}(\ell^{a_6N})$ space with $a_6N>1$. Hence, $\nu=2^{\,a_6JN}$, \eqref{e5.1}, and \eqref{e5.5}, yield
\begin{align*}
\int_\rn\lf(\sum_i\nu^{-k_i(x)}\r)^qdx&=\int_\rn\lf(\sum_i2^{-k_i(x)Ja_6N}\r)^qdx\\
&\ls \int_\rn\lf\{\lf[\sum_i\lf(M_{B_{\rho_\Theta}}(\textbf{1}_{\theta_{x_i,\,t_i}})(x)\r)^{a_6N}\r]^{1/(a_6N)}\r\}^{a_6Nq}dx\\
&\ls \int_\rn\lf[\sum_i(\textbf{1}_{\theta_{x_i,\,t_i}}(x))^{a_6N}\r]^qdx\\
&\ls \int_\Omega dx\sim|\Omega|.
\end{align*}
By \eqref{e5.10}, we further conclude that
\begin{align*}
\int_\rn (M^\circ g(x))^qdx
&\le C\lf[ \lambda^q|\Omega|+\int_{\Omega^\complement}(M^\circ f(x))^qdx\r]\\
&\le C\lf[ \lambda^{q-p}\int_\Omega (M^\circ f(x))^pdx+\lambda^{q-p}\int_{\Omega^\complement}(M^\circ f(x))^pdx\r]\\
&\le C\lambda^{q-p}\int_\rn (M^\circ f(x))^pdx.
\end{align*}
This implies that $M^\circ g\in \bigcap_{1\le q<\fz}L^q$.

Next suppose that $f\in L^q$, $1\le q<\fz$. By Lemma \ref{l5.6}, we deduce that $g$ and $b_i$, $i\in\nn_0$, are functions and $\sum_{i\in\nn_0}b_i$ converges in $L^q$. Thus, for a.e. $x\in\rn$,
$$g=f-\sum_ib_i=f\textbf{1}_{\Omega^\complement}+\sum_iP_i\phi_i.$$
By Lemma \ref{l5.3} and \eqref{e5.5}, for every $x\in\Omega$, we have $|g(x)|\le C\lambda$. Moreover, for a.e. $x\in\Omega^\complement$, by \eqref{e3.6}, we obtain that $|g(x)|=|f(x)|\le M^\circ f(x)\le \lambda$. Therefore $\|g\|_{\fz}\le C\lambda$.
\end{proof}

Motivated by \cite[Corollary 28]{lyy} and \cite[Corollary 4.11]{blyz}, we have the following lemma, which is an extension of \cite[Corollary 4.15]{dpw} from the setting $L^1$ to $L^q$ spaces, $ 1\le q<\fz$.
\begin{lemma}\label{l5.8}
For any $0<p\le 1$ and $1\le q<\fz$, the subspace $H^p(\Theta)\cap L^q$ is dense in $H^p(\Theta)$.
\end{lemma}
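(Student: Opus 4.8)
The plan is to show that $H^p(\Theta)\cap L^q$ is dense in $H^p(\Theta)$ by approximating an arbitrary $f\in H^p(\Theta)$ using the good parts $g_\lambda$ of its Calder\'on--Zygmund decompositions at heights $\lambda\to\infty$. First I would fix $f\in H^p(\Theta)$; since $M^\circ f\in L^p$ with $0<p\le 1$, we automatically have $|\{x:M^\circ f(x)>\lambda\}|<\infty$ for every $\lambda>0$, so the Calder\'on--Zygmund machinery of this section applies. (By Theorem \ref{t2.1} we may assume $\Theta$ is pointwise continuous, so that $\Omega_\lambda:=\{M^\circ f>\lambda\}$ is open by Lemma \ref{l4.1}.) For each $\lambda>0$ write $f=g_\lambda+\sum_i b_i^{(\lambda)}$ for the decomposition of degree $l$ and height $\lambda$ as in Definition \ref{d5.2}.

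The key steps are then: (1) show $g_\lambda\in H^p(\Theta)\cap L^q$; (2) show $\|f-g_\lambda\|_{H^p(\Theta)}\to 0$ as $\lambda\to\infty$. For step (1), $g_\lambda\in H^p(\Theta)$ follows because $f\in H^p(\Theta)$ and, by Lemma \ref{l5.4}, $\sum_i b_i^{(\lambda)}$ converges in $H^p(\Theta)$; completeness of $H^p(\Theta)$ (Lemma \ref{l5.1}(ii)) then gives $g_\lambda=f-\sum_i b_i^{(\lambda)}\in H^p(\Theta)$. For membership in $L^q$, I would apply the first part of Lemma \ref{l5.7}: with $q=1$ there (say), we get $M^\circ g_\lambda\in L^1$, and since $g_\lambda\in\cs'\cap L^1_\loc$ we have $|g_\lambda|\le M^\circ g_\lambda$ pointwise by \eqref{e3.6}, hence $g_\lambda\in L^1$; more generally the same argument with any $1\le q<\infty$ shows $g_\lambda\in\bigcap_{1\le q<\infty}L^q$, in particular $g_\lambda\in L^q$ for the given $q$. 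For step (2), note $f-g_\lambda=\sum_i b_i^{(\lambda)}$ and apply Lemma \ref{l5.4}:
\[
\|f-g_\lambda\|_{H^p(\Theta)}^p=\int_\rn\Big[M^\circ\Big(\sum_i b_i^{(\lambda)}\Big)(x)\Big]^p\,dx\le C\int_{\Omega_\lambda}(M^\circ f(x))^p\,dx.
\]
Since $M^\circ f\in L^p$ and $|\Omega_\lambda|\to 0$ as $\lambda\to\infty$ (because $\lambda^p|\Omega_\lambda|\le\|M^\circ f\|_p^p$, or directly $\bigcap_\lambda\Omega_\lambda$ has measure zero), dominated convergence gives $\int_{\Omega_\lambda}(M^\circ f)^p\to 0$. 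Hence $g_\lambda\to f$ in $H^p(\Theta)$, which proves density.

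I do not expect a serious obstacle here, as all the heavy lifting is contained in Lemmas \ref{l5.4} and \ref{l5.7}; the main point requiring a little care is justifying $g_\lambda\in L^q$ for the prescribed $q$, which is handled cleanly by combining the $L^q$ estimate for $M^\circ g_\lambda$ in Lemma \ref{l5.7} with the pointwise domination $|g_\lambda|\le M^\circ_{N_p,\wz N_p}g_\lambda$ from \eqref{e3.6} (valid since $g_\lambda$ is locally integrable, being a difference of the locally integrable $f$-representative and the $L^q$-convergent sum $\sum_i b_i^{(\lambda)}$). One should also remark that $g_\lambda$ need not lie in $L^q$ just from $f\in L^q$ unless $f$ itself is already in $L^q$; but since we only need $g_\lambda\in L^q$ and not uniform control, the first part of Lemma \ref{l5.7}, which needs only $M^\circ f\in L^p$, suffices. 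Finally I would note that the pointwise-continuity assumption on $\Theta$ used above is harmless for the statement of the lemma, since $H^p(\Theta)$ is unchanged under passing to an equivalent cover.
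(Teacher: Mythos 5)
Your proof is correct and follows essentially the same route as the paper: Calder\'on--Zygmund decompositions of $f$ at heights $\lambda\to\infty$, Lemma \ref{l5.4} to get $\|f-g^\lambda\|_{H^p(\Theta)}\to 0$, and Lemma \ref{l5.7} combined with \eqref{e3.6} to get $g^\lambda\in L^q$. The only quibble is your parenthetical justification of the local integrability of $g_\lambda$ via the $L^q$-convergence of $\sum_i b_i^{(\lambda)}$, which is not available for a general $f\in H^p(\Theta)$ (Lemma \ref{l5.6} requires $f\in L^q$); the paper, like your main argument, simply deduces $g^\lambda\in L^q$ from $M^\circ g^\lambda\in L^q$ and \eqref{e3.6}.
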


\begin{proof}
Let $f\in H^p(\Theta)$. For any $\lambda>0$, let $f=g^\lambda+\sum_ib^\lambda_i$ be the Calder\'{o}n-Zygmund decomposition of $f$ of degree $l\ge N_p(\Theta)$ and height $\lambda$ associated with $M^\circ$ as in Definition \ref{d5.2}.
By Lemma \ref{l5.4}, we know that
$$\lf\|\sum_ib^\lambda_i\r\|_{H^p(\Theta)}^p\le C\int_{\{x: M^\circ f(x)>\lambda\}}[M^\circ f(x)]^pdx\rightarrow 0\ \ {\rm as}\ \lambda\rightarrow\fz,$$
which implies $g^\lambda\rightarrow f$ in $H^p(\Theta)$. Moreover, by Lemma \ref{l5.7}, we have $M^\circ g^\lambda\in L^q$, $1\le q<\fz$. From this and \eqref{e3.6}, we deduce that $g^\lambda\in L^q$, $1\le q<\fz$. This finishes the proof of Lemma \ref{l5.8}.
\end{proof}

Following \cite[Section 4.3]{dpw}, for each $k\in\mathbb{Z}$, we consider the Calder\'{o}n-Zygmund decomposition of $f$ of degree $l\ge N_p(\Theta)$ at height $2^k$ associated with $M^\circ$,
\begin{equation*}
f=g^k+\sum_ib^k_i,
\end{equation*}
where
$$\Omega^k:=\{x:\,M^\circ f>2^k\},\qquad
 b^k_i:=(f-P^k_i)\phi^k_i,
 \qquad
 \theta^k_i:=\theta_{x^k_i,\,t^k_i}.$$
Here, sequences $\{x^k_i\}_{i\in\nn_0} \subset \Omega^k$ and $\{t^k_i\}_{i\in\nn_0} \subset \R$ satisfy \eqref{e5.1}-\eqref{e5.5} for $\Omega^k$, functions $\{\phi^k_i\}_{i\in\nn_0}$ are defined as in \eqref{e5.6}, and polynomials $\{P^k_i\}_{i\in\nn_0}$ are projections of $f$ onto $\mathcal{P}_l$ with respect to the inner product given by \eqref{e5.6a}.

Next, we define $P^{k+1}_{ij}$ as the orthogonal projection of $(f-P^{k+1}_j)\phi^k_i$ with respect to the inner product
$$\langle P,Q\rangle_j:=\frac1{\int\phi^{k+1}_j}\int_\rn P(x)Q(x)\phi^{k+1}_j(x)dx\ \ \ {\rm for\ all}\ P,\,Q\in\mathcal{P}_l.$$
That is, $P^{k+1}_{ij}$ is the unique polynomial in $\mathcal{P}_l$ such that
$$\int_\rn(f(y)-P^{k+1}_j(y))\phi^k_i(y)Q(y)\phi^{k+1}_j(y)dy=\int_\rn P^{k+1}_{ij}(y)Q(y)\phi^{k+1}_j(y)dy\ \ {\rm for\ all}\ Q\in\mathcal{P}_l.$$
In particular, if $\theta_{x^k_i,\,t^k_i-J}\cap\theta_{x^{k+1}_j,\,t^{k+1}_j-J} =\emptyset$, then $P^{k+1}_{ij}= 0$.

For each $k\in \Z$, define the index set
\begin{equation*}
I_k:=\{(i,j)\in\nn_0:\theta_{x^k_i,\,t^k_i-J}\cap
\theta_{x^{k+1}_j,\,t^{k+1}_j-J}\neq\emptyset\}.
\end{equation*}
We will need to employ two additional results \cite[Lemmas 4.16 and 4.17]{dpw}, respectively.

\begin{lemma}\label{l5.10} The following holds for any $k\in \Z$.
\begin{enumerate}
\item[(i)] For any $(i,j)\in I_k$ we have $\theta_{x^{k+1}_j,\, t^{k+1}_j -J} \subset
\theta_{x^{k}_i,\, t^{k}_i -J-3\gamma -1}$,
\item[(ii)]
There exists $L'>0$, which does not depend on $k$, such that$$
\sharp \{i\in\nn_0:\,
(i,j) \in I_k \} \le L' \qquad\text{for any }j\in \nn_0.$$
\end{enumerate}
\end{lemma}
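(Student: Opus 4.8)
The plan is to prove both parts directly from the geometry of the ellipsoid cover encoded in Lemma \ref{l2.2}, together with the covering/overlap properties \eqref{e5.1}--\eqref{e5.5} of the Calder\'on-Zygmund decompositions at consecutive heights $2^k$ and $2^{k+1}$. The crucial structural fact that makes everything work is the monotonicity $\Omega^{k+1}\subset\Omega^k$ of the superlevel sets, which is immediate from $\Omega^k=\{x:M^\circ f(x)>2^k\}$, and the two inclusions \eqref{e5.9} and \eqref{e5.3}, which say that the dilated ellipsoids $\theta_{x_i^k,t_i^k-J}$ still cover $\Omega^k$ while the slightly more dilated $\theta_{x_i^k,t_i^k-J-2\gamma}$ stay inside $\Omega^k$ (and in particular do not hit $(\Omega^k)^\complement$).

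For part (i), fix $(i,j)\in I_k$, so there is a point $z\in\theta_{x^k_i,\,t^k_i-J}\cap\theta_{x^{k+1}_j,\,t^{k+1}_j-J}$. First I would compare the scales: since $\theta_{x^{k+1}_j,\,t^{k+1}_j-J}\subset\Omega^{k+1}\subset\Omega^k$ and $\theta_{x_i^k,\,t_i^k-J-2\gamma-1}$ must meet $(\Omega^k)^\complement$ (the analogue of \eqref{e5.3} at height $2^k$, with the radius pushed one notch further), while $\theta_{x^{k+1}_j,\,t^{k+1}_j-J}$ does not, one concludes that $\theta^{k+1}_j$ cannot be "too large" relative to $\theta^k_i$; quantitatively $t^{k+1}_j-J\ge t_i^k-J-2\gamma-1$, i.e. the scale index of the $(k{+}1)$-ellipsoid is at least that of a fixed dilate of the $k$-ellipsoid. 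Then, since $z\in\theta_{x^k_i,\,t^k_i-J}\cap\theta_{x^{k+1}_j,\,t^{k+1}_j-J}\ne\emptyset$ with $t^k_i-J\le t^{k+1}_j-J$, Lemma \ref{l2.2}(ii) gives $\theta_{x^{k+1}_j,\,t^{k+1}_j-J}\subset\theta_{x^k_i,\,t^k_i-J-\gamma}$; chasing the constants (combining this with the scale comparison above and Lemma \ref{l2.2}(ii) applied once more) upgrades this to the claimed $\theta_{x^{k+1}_j,\,t^{k+1}_j-J}\subset\theta_{x^{k}_i,\,t^{k}_i-J-3\gamma-1}$. The main obstacle here is the bookkeeping of the additive constants $J,\gamma$ and the "$-1$" coming from \eqref{e5.3}--\eqref{e5.5}; one has to be careful that the chain of inclusions from Lemma \ref{l2.2}(ii) is applied with scales ordered the right way, and that Lemma \ref{l2.2}(i) (which lets one pass from $\theta_{x,t}$ to $\frac12\theta_{x,t-J}$) is invoked only when the scale difference is a multiple of $J$.

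For part (ii), I would fix $j\in\nn_0$ and use part (i): every $i$ with $(i,j)\in I_k$ satisfies $\theta_{x^{k+1}_j,\,t^{k+1}_j-J}\subset\theta_{x^k_i,\,t^k_i-J-3\gamma-1}$, and since these $\theta_{x^k_i,\,t^k_i-J-3\gamma-1}$ all contain the fixed ellipsoid $\theta^{k+1}_j$ they have pairwise nonempty intersection and (by the volume estimate \eqref{e2.1}, all of them being comparable in volume to $\theta^k_i$ which is comparable to the common sub-ellipsoid's volume only from below, so one needs also the lower bound $|\theta^k_i|\gtrsim$ something; more precisely one uses that the $\theta_{x^k_i,\,t^k_i-J-\gamma}$ have bounded overlap \eqref{e5.5}) mutual intersections force a bounded count. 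Concretely: the cores $\theta_{x^k_i,\,t^k_i+\gamma}$ are pairwise disjoint, all contained in $\Omega^k$, and each contains a fixed fraction of the volume of $\theta_{x^k_i,\,t^k_i-J-3\gamma-1}\supset\theta^{k+1}_j$; since the latter all contain the fixed ellipsoid $\theta^{k+1}_j$, a Vitali-type volume-packing argument (exactly as in the derivation of \eqref{e2.x2} or \eqref{e5.5}) bounds their number by a constant $L'$ depending only on $\mathbf p(\Theta)$ and the fixed constants $J,\gamma$, hence not on $k$. The potential obstacle is ensuring the volume comparison goes through in both directions — one direction is the shape condition \eqref{e2.2} / Lemma \ref{l2.2} giving $|\theta_{x^k_i,\,t^k_i-J-3\gamma-1}|\sim|\theta^k_i|$, the other is that $\theta^{k+1}_j\subset\theta_{x^k_i,\,t^k_i-J-3\gamma-1}$ together with disjointness of the cores $\theta^k_i$ at scale $t^k_i+\gamma$ lets one sum volumes — so this is really the same packing estimate already used to prove \eqref{e5.5}, now applied across the two decompositions, and I do not expect genuine difficulty beyond keeping the constants uniform in $k$.
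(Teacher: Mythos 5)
This lemma is not proved in the paper at all; it is quoted verbatim from \cite[Lemmas 4.16 and 4.17]{dpw}, so I can only judge your sketch on its own merits. For part (i) your plan is essentially the right one, and the scale comparison you assert does follow from the ingredients you name: if $t^{k+1}_j\le t^k_i-\gamma-1$, then applying Lemma \ref{l2.2}(ii) twice (first to a common point of $\theta_{x^k_i,\,t^k_i-J}$ and $\theta_{x^{k+1}_j,\,t^{k+1}_j-J}$, then to the point $x^k_i$) gives $\theta_{x^k_i,\,t^k_i-J-2\gamma-1}\subset\theta_{x^{k+1}_j,\,t^{k+1}_j-J-2\gamma}\subset\Omega^{k+1}\subset\Omega^k$ by \eqref{e5.3} at level $k+1$, contradicting $\theta_{x^k_i,\,t^k_i-J-2\gamma-1}\cap(\Omega^k)^\complement\ne\emptyset$ at level $k$; this yields even a bit more than the bound you state. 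Your intermediate use of Lemma \ref{l2.2}(ii) ``with $t^k_i-J\le t^{k+1}_j-J$'' is not justified, since that ordering need not hold, but the repair is routine: since $t^k_i-J-2\gamma-1\le t^{k+1}_j-J$, first pass to the same--centre ellipsoid $\theta_{x^k_i,\,t^k_i-J}\subset\theta_{x^k_i,\,t^k_i-J-2\gamma-1}$ and then apply Lemma \ref{l2.2}(ii) once to get exactly $\theta_{x^{k+1}_j,\,t^{k+1}_j-J}\subset\theta_{x^k_i,\,t^k_i-J-3\gamma-1}$. So (i) is acceptable as a sketch.

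Part (ii), however, has a genuine gap. A Vitali-type count needs the pairwise disjoint cores $\theta_{x^k_i,\,t^k_i+\gamma}$ to lie in one fixed region of volume comparable to $|\theta_{x^{k+1}_j,\,t^{k+1}_j-J}|$, i.e.\ a two-sided comparison of scales across the two generations. Only one direction is available: part (i) prevents $\theta^{k+1}_j$ from being large relative to $\theta^k_i$, but nothing prevents $t^k_i\ll t^{k+1}_j$ — the level-$k$ ellipsoids meeting the fixed level-$(k+1)$ ellipsoid can be enormously larger than it (this happens whenever $\Omega^{k+1}$ is a tiny set deep inside a wide $\Omega^k$). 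Then their cores are themselves enormous, are not contained in any bounded dilate of $\theta^{k+1}_j$, and the inclusions you invoke point the wrong way ($\theta^{k+1}_j\subset\theta_{x^k_i,\,t^k_i-J-3\gamma-1}$), so there is no fixed container in which to ``sum volumes''. Nor is this ``the same packing estimate as \eqref{e5.5}'': \eqref{e5.5} is a single-generation statement whose proof rests on level-$k$ ellipsoids with intersecting $(-J-\gamma)$-dilates having two-sidedly comparable scales via \eqref{e5.3} and its companion, whereas for the larger dilates $\theta_{x^k_i,\,t^k_i-J-3\gamma-1}$, which may stick out of $\Omega^k$ beyond what \eqref{e5.3} certifies, pointwise bounded overlap is not automatic (already for classical Whitney cubes of a half-space, $c$-dilates have unbounded overlap once $c$ is large). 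What a complete proof must supply — and your sketch does not — is the mutual comparability of the scales $t^k_i$, $t^k_{i'}$ over all $i,i'$ occurring for the fixed $j$ (using that each $\theta_{x^k_i,\,t^k_i-J-3\gamma-1}$ contains the common small ellipsoid, together with the level-$k$ Whitney properties and careful dilate bookkeeping); once that is known, the disjoint cores sit in a fixed dilate of the largest of these ellipsoids and the volume count gives a uniform $L'$. This cross-index comparability is the crux of \cite[Lemma 4.17]{dpw} and is missing from your argument.
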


\begin{lemma}\label{l5.11}
There exists a constant $C>0$, such that for every $i,\,j\in\nn_0$ and $k\in\mathbb{Z}$,
$$\sup_{x\in\rn}\lf|P^{k+1}_{ij}(x)
\phi^{k+1}_{j}(x)\r|\le C2^{k+1}.$$
Moreover, $P^{k+1}_{ij}= 0$ if $(i,j) \not \in I_k$.
\end{lemma}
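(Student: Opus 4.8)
The plan is to prove Lemma \ref{l5.11} by reducing the bound on $P^{k+1}_{ij}\phi^{k+1}_j$ to an application of Lemma \ref{l5.3} applied at height $2^{k+1}$, exploiting the defining orthogonality relation for $P^{k+1}_{ij}$. First I would observe that the polynomial $P^{k+1}_{ij}$ is by construction the orthogonal projection of the function $(f-P^{k+1}_j)\phi^k_i$ onto $\mathcal{P}_l$ with respect to the inner product $\langle\cdot,\cdot\rangle_j$ attached to $\phi^{k+1}_j$; equivalently, $P^{k+1}_{ij}$ represents the functional $Q\mapsto \langle (f-P^{k+1}_j)\phi^k_i, Q\rangle_j$ via the Riesz lemma on the finite-dimensional Hilbert space $(\mathcal{P}_l, \langle\cdot,\cdot\rangle_j)$. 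The key point is that on the support of $\phi^{k+1}_j$ the factor $\phi^k_i$ is bounded by $1$, so testing against $Q$ gives $\langle (f-P^{k+1}_j)\phi^k_i, Q\rangle_j = \frac{1}{\int \phi^{k+1}_j}\int (f-P^{k+1}_j)\phi^k_i Q \phi^{k+1}_j$, which looks like the functional defining $P^{k+1}_j$ itself, modulated by $\phi^k_i$.

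The heart of the argument is a comparison between norms on $\mathcal{P}_l$ induced by different weights. Concretely, I would introduce on $\mathcal{P}_l$ the auxiliary sesquilinear form $\langle P,Q\rangle_{ij}:=\frac{1}{\int \phi^{k+1}_j}\int P Q\, \phi^k_i\phi^{k+1}_j\, dx$ and note that, since $0\le \phi^k_i \le 1$, one has $\langle P,P\rangle_{ij}\le \langle P,P\rangle_j$ for all $P\in \mathcal{P}_l$; moreover, by Lemma \ref{l5.10}(i), whenever $(i,j)\in I_k$ the ellipsoid $\theta^{k+1}_j$ (dilated appropriately) is comparable in size and location to the region where both bump functions live, so that by equivalence of norms on the finite-dimensional space $\mathcal P_l$ (after rescaling by $M_{x^{k+1}_j,\,t^{k+1}_j}$, as is standard in these Calderón–Zygmund arguments, cf. the proof of Lemma \ref{l5.3} in \cite{dpw}) the two forms $\langle\cdot,\cdot\rangle_{ij}$ and $\langle\cdot,\cdot\rangle_j$ are in fact \emph{equivalent} with constants independent of $i,j,k$. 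Then $\|P^{k+1}_{ij}\phi^{k+1}_j\|_\infty$ is controlled by $\|P^{k+1}_{ij}\|_{L^\infty(\theta^{k+1}_j)}$ which, again by finite-dimensionality and rescaling, is comparable to $\langle P^{k+1}_{ij}, P^{k+1}_{ij}\rangle_j^{1/2}$; expanding this via \eqref{e5.7}-type identities and the definition of $P^{k+1}_{ij}$, it equals $\langle (f-P^{k+1}_j)\phi^k_i, P^{k+1}_{ij}\rangle_j$, which by the weight comparison and Cauchy–Schwarz is bounded by a constant times $\langle (f-P^{k+1}_j), (f-P^{k+1}_j)\rangle_j^{1/2}$ restricted suitably; but $\langle (f-P^{k+1}_j),Q\rangle_j = \langle f,Q\rangle_j - \langle P^{k+1}_j,Q\rangle_j = 0$ by the very definition \eqref{e5.7} of $P^{k+1}_j$ — so one must be careful: it is not $f-P^{k+1}_j$ but the extra factor $\phi^k_i$ that produces the nonzero contribution, and the bound ultimately comes from $\sup|P^{k+1}_j \phi^{k+1}_j|\le C2^{k+1}$ (Lemma \ref{l5.3} at height $2^{k+1}$) together with the grand-maximal bound $|f|\lesssim M^\circ f$ on $\theta^{k+1}_j\setminus$ controlled regions. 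The cleanest route is: $P^{k+1}_{ij}$ is the $\langle\cdot,\cdot\rangle_j$-projection of $(f-P^{k+1}_j)\phi^k_i$, hence $\|P^{k+1}_{ij}\|_{j}\le \|(f-P^{k+1}_j)\phi^k_i\|_{L^2(\phi^{k+1}_j\,dx/\int\phi^{k+1}_j)}$, and then one estimates the right-hand side pointwise on $\supp\phi^{k+1}_j\cap\supp\phi^k_i$ using Lemma \ref{l5.10}(i) (which places this set inside a fixed dilate of $\theta^k_i$, a region where $M^\circ f$ is $\lesssim$ the height associated to the level-$k$ decomposition, comparable up to a constant to $2^{k+1}$) and Lemma \ref{l5.3}.

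Then I would carry out the steps in this order: (1) record the Riesz-representation characterization of $P^{k+1}_{ij}$ as a projection and the trivial fact $\langle P,P\rangle_{ij}\le\langle P,P\rangle_j$; (2) prove the norm-equivalence $\langle\cdot,\cdot\rangle_{ij}\sim\langle\cdot,\cdot\rangle_j$ on $\mathcal{P}_l$ for $(i,j)\in I_k$, by changing variables $y = x^{k+1}_j + M_{x^{k+1}_j,\,t^{k+1}_j}z$ so that $\theta^{k+1}_j$ becomes the unit ball and invoking Lemma \ref{l5.10}(i) to see that $\phi^k_i$ is $\equiv 1$ on a fixed-proportion subball (this uses that $\theta^{k+1}_j \subset \theta^k_i$ dilated, so the \emph{smaller} ellipsoid where $\phi^k_i=1$ still captures a definite fraction of $\theta^{k+1}_j$) — finitely many dimensions then force the equivalence; (3) deduce $\|P^{k+1}_{ij}\|_{L^\infty(\theta^{k+1}_j)}\lesssim \langle P^{k+1}_{ij},P^{k+1}_{ij}\rangle_j^{1/2}$; (4) bound $\langle P^{k+1}_{ij},P^{k+1}_{ij}\rangle_j$ by $\|(f-P^{k+1}_j)\phi^k_i\|^2$ in the $\phi^{k+1}_j$-weighted $L^2$ norm, split as $f\phi^k_i$ and $P^{k+1}_j\phi^k_i$, estimate the second by Lemma \ref{l5.3} at height $2^{k+1}$, and the first by noting $\supp(f\phi^k_i\phi^{k+1}_j)\subset \theta^k_i$-dilate and using $|f(x)|\le M^\circ f(x)$ with the level-$(k)$ Calderón–Zygmund bound $M^\circ f \lesssim 2^{k}$ on a neighborhood of the complement region, or more simply bounding the local $L^2$ average by $\lambda$-type control from Lemma \ref{l5.3} applied to the level $k$ decomposition — whichever yields the clean $\lesssim 2^{k+1}$; (5) combine to get $\sup_x|P^{k+1}_{ij}(x)\phi^{k+1}_j(x)|\le C 2^{k+1}$. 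The final assertion $P^{k+1}_{ij}=0$ when $(i,j)\notin I_k$ is immediate: then $\theta_{x^k_i,t^k_i-J}\cap\theta_{x^{k+1}_j,t^{k+1}_j-J}=\emptyset$, so $\phi^k_i\phi^{k+1}_j\equiv 0$, making the defining integral identity for $P^{k+1}_{ij}$ vanish identically, whence $P^{k+1}_{ij}=0$ by uniqueness.

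The main obstacle is step (2): establishing that the weight $\phi^k_i$ does not degenerate the inner product on $\theta^{k+1}_j$, i.e.\ that $\phi^k_i$ is bounded below by a positive constant on a fixed proportion of $\theta^{k+1}_j$. This is exactly where Lemma \ref{l5.10}(i), $\theta_{x^{k+1}_j,\,t^{k+1}_j-J}\subset\theta_{x^k_i,\,t^k_i-J-3\gamma-1}$, is essential — it tells us $\theta^{k+1}_j$ (suitably dilated) sits well inside the region where $\phi^k_i$ is comparable to $1$ — but turning this containment into a uniform lower bound on $\phi^k_i$ over a definite fraction of the unit-ball-normalized $\theta^{k+1}_j$ requires carefully chasing the shape-condition constants \eqref{e2.2} to control the ratio of the relevant matrix norms, and handling the normalization in the definition \eqref{e5.6} of $\phi^k_i$ (the denominator $\sum_j \wz\phi_j$, bounded between $1$ and $L$). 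Once that quantitative containment is in hand, everything else is the standard finite-dimensional linear algebra of Calderón–Zygmund decompositions, mirroring the $L^1$ argument of \cite[Lemma 4.17]{dpw}.
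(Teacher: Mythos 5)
The paper does not prove Lemma \ref{l5.11} at all: it is quoted verbatim from \cite[Lemma 4.17]{dpw}, so your proposal has to stand on its own, and it has a genuine gap at its quantitative core. First, in the setting of Section \ref{s4} the object $f$ is only a tempered distribution with $|\{M^\circ f>\lambda\}|<\infty$; it need not be locally square integrable, so the weighted-$L^2$ Bessel inequality $\langle P^{k+1}_{ij},P^{k+1}_{ij}\rangle_j\le \|(f-P^{k+1}_j)\phi^k_i\|^2_{L^2(\phi^{k+1}_j dx/\int\phi^{k+1}_j)}$ and the pointwise bound $|f|\le M^\circ f$ that drive your steps (3)--(4) are not available; the defining identity for $P^{k+1}_{ij}$ only gives you pairings of $f$ against smooth compactly supported test functions. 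Second, and more seriously, even when $f\in L^q$ (as in Theorem \ref{l5.9}) the bound you invoke is backwards: $\theta^k_i\subset\Omega^k=\{M^\circ f>2^k\}$, so on $\theta^k_i$ and its dilates the grand maximal function is bounded \emph{below}, not above, by $2^k$. The stopping-time construction only guarantees a point $y\in(\Omega^{k+1})^\complement$ inside $\theta_{x^{k+1}_j,\,t^{k+1}_j-J-2\gamma-1}$ with $M^\circ f(y)\le 2^{k+1}$, and such a point controls pairings of $f$ with normalized smooth bumps adapted to that ellipsoid; it does \emph{not} control local $L^1$ or $L^2$ averages of $|f|$ over $\supp\phi^{k+1}_j$, because the Hardy--Littlewood maximal function is not dominated pointwise by $M^\circ$ (an oscillatory $f$ can have tiny $M^\circ f$ at $y$ while the nearby averages of $|f|$ stay of size one). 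So your step (4) cannot produce the factor $2^{k+1}$.

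The argument that actually works (the one in \cite[Lemma 4.17]{dpw}, mirroring the fixed-dilation case in \cite{b}) expands $P^{k+1}_{ij}=\sum_m\langle (f-P^{k+1}_j)\phi^k_i,\pi_m\rangle_j\,\pi_m$ in an orthonormal basis $\{\pi_m\}$ of $(\mathcal{P}_l,\langle\cdot,\cdot\rangle_j)$, bounds $\sup_x|\pi_m(x)\phi^{k+1}_j(x)|$ uniformly by rescaling $\theta^{k+1}_j$ to the unit ball, treats the $P^{k+1}_j$ contribution by Lemma \ref{l5.3} at height $2^{k+1}$, and treats the $f$ contribution by showing that $\phi^k_i\,\pi_m\,\phi^{k+1}_j$, after the appropriate dilation and normalization, is a bounded multiple of a test function in $\mathcal{S}_{N,\,\wz{N}}$ adapted to an ellipsoid containing the exterior point $y$ above --- here Lemma \ref{l5.10}(i) together with the shape condition \eqref{e2.2} is used to control the derivatives of $\phi^k_i$ at the scale of $\theta^{k+1}_j$ --- whence $|\langle f,\cdot\rangle|\lesssim M^\circ f(y)\le 2^{k+1}$. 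Your norm-equivalence claim in step (2) is also false as stated (for $(i,j)\in I_k$ the support of $\phi^k_i$ may occupy only a small corner of $\supp\phi^{k+1}_j$, so $\phi^k_i$ has no lower bound on a definite fraction of $\theta^{k+1}_j$), though this is secondary since the projection inequality does not need it. Only your final observation, that $P^{k+1}_{ij}=0$ when $(i,j)\notin I_k$ because $\phi^k_i\phi^{k+1}_j\equiv0$, is correct as written.
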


Motivated by \cite[Proposition 4.10]{hlyy}, we have the following extension of Theorem \ref{l3.8}, which yields convergence of atomic decompositions in $L^q$ norm.

\begin{theorem}\label{l5.9}
Let $0<p\le 1$, $1<q<\fz$, and $l\ge N_p(\Theta)$. Then, for any $f\in L^q\cap H^p(\Theta)$,
there exist a sequence of $(p,\fz,l)$-atoms $\{a^k_i\}_{k\in\mathbb Z,\, i\in\nn_0}$, a sequence $\{\lambda^k_i\}_{k\in\mathbb Z,\,i\in\nn_0}\subset\mathbb{C}$,  and a positive constant $C$ independent of $f$ such that
\begin{equation}\label{ad1}
\sum_{k\in\mathbb{Z}}\sum_{i\in\nn_0}|\lambda^k_i|^p\le C\|f\|^p_{H^p(\Theta)}
\end{equation}
and
\begin{equation}\label{ad2}
f=\sum_{k\in\zz}\sum_{i\in\nn_0}\lambda^k_ia^k_i\qquad \text{converges  in }L^q.
\end{equation}
\end{theorem}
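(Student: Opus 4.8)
The plan is to follow the classical Calderón–Zygmund approach to atomic decomposition, but keeping careful track of $L^q$-convergence in addition to the $\mathcal S'$-convergence established in \cite{dpw}. First I would set up the Calderón–Zygmund decompositions $f = g^k + \sum_i b_i^k$ at heights $2^k$ for each $k\in\Z$, using the machinery recalled above: the sets $\Omega^k$, the partitions of unity $\{\phi_i^k\}$, the projection polynomials $\{P_i^k\}$, and the cross-projections $\{P_{ij}^{k+1}\}$. Since $f\in L^q$ and $M^\circ f\in L^p$, Lemmas \ref{l5.6} and \ref{l5.7} guarantee that $\sum_i b_i^k$ converges in $L^q$, that $g^k\in L^\infty$ with $\|g^k\|_\infty \le C2^k$, and that $M^\circ g^k\in L^q$ with the quantitative bound $\int (M^\circ g^k)^q \le C 2^{k(q-p)} \|f\|_{H^p(\Theta)}^p$. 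In particular $g^k \to 0$ in $L^q$ as $k\to -\infty$ (since the superlevel set $\Omega^k$ shrinks), and by Lemma \ref{l5.4}, $g^k \to f$ in $H^p(\Theta)$, hence in $\mathcal S'$, as $k\to+\infty$.

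Next I would form the telescoping identity $f = \sum_{k\in\Z}(g^{k+1}-g^k)$, and express each difference $g^{k+1}-g^k$ as a sum over $i$ of atoms. The standard computation (as in \cite{dpw, b, hlyy}) rewrites
\[
g^{k+1}-g^k = \sum_i\Bigl[b_i^k - \sum_j \bigl( b_j^{k+1}\phi_i^k - P_{ij}^{k+1}\phi_j^{k+1}\bigr)\Bigr] + \sum_i\Bigl(\sum_j P_{ij}^{k+1}\phi_j^{k+1} - P_i^k\phi_i^k\Bigr),
\]
and, after regrouping using $\sum_i \phi_i^k = \mathbf 1_{\Omega^k}$ and the vanishing-moment cancellations built into the $P$'s, one obtains $g^{k+1}-g^k = \sum_i h_i^k$ where each $h_i^k$ is supported in a fixed dilate of $\theta_i^k$, has vanishing moments up to order $l$, and satisfies $\|h_i^k\|_\infty \le C 2^k$ by Lemmas \ref{l5.3} and \ref{l5.11}. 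Setting $\lambda_i^k := C 2^k |\theta_i^k|^{1/p}$ and $a_i^k := (\lambda_i^k)^{-1} h_i^k$ yields $(p,\infty,l)$-atoms, and the bound \eqref{ad1} follows from $\sum_i |\theta_i^k| \le C|\Omega^k|$ (a consequence of \eqref{e5.5}, \eqref{e5.9}) together with $\sum_k 2^{kp}|\Omega^k| \le C\int (M^\circ f)^p = C\|f\|_{H^p(\Theta)}^p$, exactly as in the proof of Theorem \ref{l3.8} in \cite{dpw}.

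The new ingredient, and the main obstacle, is upgrading the convergence of $\sum_{k}\sum_i \lambda_i^k a_i^k$ from $\mathcal S'$ to $L^q$. For the partial sums over $k$ between two levels $-N_1$ and $N_2$, the telescoping gives $\sum_{k=-N_1}^{N_2-1}(g^{k+1}-g^k) = g^{N_2} - g^{-N_1}$; since $g^{-N_1}\to 0$ and $g^{N_2}\to f$ in $L^q$ as $N_1,N_2\to\infty$ (using $\|g^{N_2}-f\|_q \le \|f\mathbf 1_{\Omega^{N_2}}\|_q + \|\sum_i b_i^{N_2}\|_q$ and Lemma \ref{l5.6}, together with $|\Omega^{N_2}|\to 0$), the sum over $k$ converges in $L^q$ to $f$. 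The more delicate point is the inner sum over $i$ for fixed $k$: one must show $\sum_i h_i^k$ converges in $L^q$, not merely in $\mathcal S'$. Here I would invoke the bounded overlap of the supports $\{\theta_i^k\}_i$ from \eqref{e5.5} (and Lemma \ref{l5.10}(ii) for the cross terms), which lets me estimate $\|\sum_{i\in F} h_i^k\|_q^q \le L \sum_{i\in F}\|h_i^k\|_q^q \le C 2^{kq}\sum_{i\in F}|\theta_i^k|$ for any finite $F$; since $\sum_i |\theta_i^k| \le C|\Omega^k| < \infty$, the tails go to zero and the inner sum converges absolutely in $L^q$. Combining the two convergences via a diagonal/$\varepsilon$-argument — controlling $\|f - \sum_{|k|\le N}\sum_{i\le N}\lambda_i^k a_i^k\|_q$ by splitting into the $k$-tail and, within each remaining level, the $i$-tail — gives \eqref{ad2}. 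The bookkeeping to make the double-indexed tail estimate uniform is the part requiring the most care.
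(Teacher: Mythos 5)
Your skeleton agrees with the paper's up to the point where the actual novelty lies: the Calder\'on--Zygmund decompositions at heights $2^k$, the functions $h^k_i$ with $\|h^k_i\|_\fz\le C2^k$ and supports of bounded overlap inside $\Omega^k$, and the coefficient bound \eqref{ad1} are all as in \cite{dpw} and in the paper. The gap is in the step you yourself flag as delicate, and it is a genuine one. After truncating in $k$ to $|k|\le N$, you must sum the inner-tail errors over the $2N+1$ surviving levels, and the only bound your argument provides is $\|\sum_{i>M}h^k_i\|_q\lesssim 2^k|\Omega^k|^{1/q}$. But the available information is $\sum_k 2^{kq}|\Omega^k|\sim\|M^\circ f\|_q^q<\fz$ (and the analogous $p$-sum), which for $q>1$ does \emph{not} imply $\sum_k 2^k|\Omega^k|^{1/q}<\fz$ (e.g.\ $|\Omega^k|\sim 2^{-kq}k^{-2}$, $k\ge1$, $q\ge2$). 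So there is no summable-in-$k$ majorant, and the accumulated error $\sum_{|k|\le N}\|\sum_{i>N}h^k_i\|_q$ with a \emph{common} cutoff $N$ is not shown to tend to $0$; a triangle inequality in $k$ cannot close this. Choosing inner cutoffs $M_k$ depending on $k$ after fixing the range $|k|\le K$ does work, but it only produces convergence of one particular sequence of finite partial sums (iterated convergence), not the convergence of the series obtained in the paper, which is unconditional. Two smaller slips: $g^k\to0$ in $L^q$ as $k\to-\fz$ is not because ``$\Omega^k$ shrinks'' (these sets grow as $k\to-\fz$); it follows from $|g^k|\le M^\circ g^k$ a.e.\ and the bound of Lemma \ref{l5.7}, $\int(M^\circ g^k)^q\lesssim 2^{k(q-p)}\|f\|^p_{H^p(\Theta)}$. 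Also your displayed regrouping identity is off: the first bracketed sum already equals $g^{k+1}-g^k$ (since $\sum_i P^{k+1}_{ij}=0$), so the second group adds a spurious $-\sum_i P^k_i\phi^k_i$.

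The idea you are missing, and which the paper uses, is to control the double series \emph{pointwise} rather than in norm across $k$. From the bounded overlap of supports one gets $\sum_i|h^k_i|\le C\,2^k\mathbf{1}_{\Omega^k}$ (this is \eqref{e5.x1}); then, for a.e.\ $x$, choosing $k(x)$ with $2^{k(x)}<M^\circ f(x)\le 2^{k(x)+1}$, the terms vanish for $k>k(x)$ and the geometric series over $k\le k(x)$ gives $\sum_k\sum_i|h^k_i(x)|\lesssim 2^{k(x)}\lesssim M^\circ f(x)$. Since $M^\circ$ is bounded on $L^q$, the whole double family is dominated by a single $L^q$ function, so the series converges absolutely a.e.\ and, by dominated convergence, unconditionally in $L^q$; the known convergence in $\cs'$ identifies the limit with $f$, giving \eqref{ad2} with no tail bookkeeping at all. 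If you insist on your telescoping route, you must either import this pointwise domination or settle for the weaker iterated-sum convergence with $k$-dependent cutoffs and verify it suffices for the applications in Section \ref{s6}; as written, the uniform double-tail estimate you defer is exactly what your stated bounds cannot deliver.
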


\begin{proof}
Let $f\in L^q\cap H^p(\Theta)$ with $1\le q<\infty$ and $0<p\le 1$. Following the proof of \cite[Theorem 4.19]{dpw} with \cite[Lemma 4.14]{dpw} being replaced by Lemma \ref{l5.7}, we obtain the same conclusion, an atomic decomposition of $f$, under the assumption $f\in L^q\cap H^p(\Theta)$ instead of $f\in L^1\cap H^p(\Theta)$. More precisely, define a sequence of functions $\{h^k_i\}_{k\in\zz,\,i\in\nn_0}$ by
\begin{equation*}
h^k_i:=f{\bf 1}_{(\Omega^{k+1})^\complement}\phi^k_i
-P^k_i\phi^k_i
+\sum_{j\in\nn_0}P^{k+1}_j\phi^{k+1}_j\phi^k_i
+\sum_{j\in\nn_0}P^{k+1}_{ij}\phi^{k+1}_j.
\end{equation*}
By Lemma \ref{l5.3} and Lemma \ref{l5.11} we conclude that
\begin{equation}\label{2k}
\|h^k_i\|_\fz\le C 2^k.
\end{equation}
Moreover, there exist a sequence of $(p,\fz,l)$-atoms  $\{a^k_i\}_{k\in\zz,\,i\in\nn_0}$, which are supported on ellipsoids $\{\theta_{x^k_i,\,t^k_i-J-3\gamma-1}\}_{k\in\zz,\,i\in\nn_0}$,  and a sequence $\{\lambda^k_i\}_{k\in\zz,\,i\in\nn_0}\subset\mathbb{C}$, such that \eqref{ad1} holds, $h^k_i =\lambda^k_i a^k_i$ for all $i,j\in \nn_0$, and
\begin{equation}\label{e5.11}
f=\sum_{k\in\mathbb{Z}}\sum_{i\in\nn_0}\lambda^k_i a^k_i = \sum_{k\in\mathbb{Z}}\sum_{i\in\nn_0}h^k_i\qquad\text{converges in }\cs'.
\end{equation}

It remains to prove that the atomic decomposition \eqref{e5.11} also converges in $L^q$ norm.
Since $\supp\phi^{k+1}_j\subset\theta_{x^{k+1}_j,\,t^{k+1}_j-J}$ and $\supp\phi^k_i\subset\theta_{x^k_i,\,t^k_i-J}$, by Lemma \ref{l5.11} we have
$$\supp h^k_i\subset\theta_{x^k_i,\,t^k_i-J}\cup\lf(\bigcup_{j\in\nn_0 ,\ (i,\,j)\in I_k}\theta_{x^{k+1}_j,\,t^{k+1}_j-J}\r).$$
Hence, by \eqref{e5.1}, \eqref{e5.3}, and \eqref{e5.5} applied at levels $k$ and $k+1$ and by Lemma \ref{l5.10},  it follows that
\begin{align*}
\sum_{i\in\nn_0}{\bf 1}_{\supp h^k_i}
&\le\sum_{i\in\nn_0}{\bf 1}_{\theta_{x^k_i,\,t^k_i-J}}
+\sum_{(i,j)\in I_k}{\bf 1}_{\theta_{x^{k+1}_j,\,t^{k+1}_j-J}}
\le L{\bf 1}_{\Omega^k}+\sum_{j\in\nn_0}\sum_{i\in \nn_0, (i,j)\in I_k}{\bf 1}_{\theta_{x^{k+1}_j,\,t^{k+1}_j-J}}\\
&\le L{\bf 1}_{\Omega^k}+LL'{\bf 1}_{\Omega^{k+1}}\le L(1+L'){\bf 1}_{\Omega^k}.
\end{align*}
In the last step we used $\Omega^{k+1}\subset\Omega^k$.
This together with \eqref{2k} implies that
\begin{equation}\label{e5.x1}
\sum_{i\in\nn_0}|h^k_i|\le  C L(1+L') 2^k{\bf 1}_{\Omega^k}.
\end{equation}

Since $f\in L^q\cap H^p(\Theta)$, for almost every $x\in\R^n$, there exists $k(x)\in\zz$ such that $2^{k(x)}<M^\circ f(x)\le 2^{k(x)+1}$. From this and \eqref{e5.x1}, we deduce that, for a.e. $x\in\rn$,
\begin{align*}
\sum_{k\in\zz}\sum_{i\in\nn_0}|h^k_i(x)|
&\le CL(1+L')\sum_{k\in(-\fz,k(x)]\cap\zz}2^k{\bf 1}_{\Omega^k}\\
&\le CL(1+L')2^{k(x)}{\bf 1}_{\Omega^{k(x)}}\sim M^\circ f(x).\nonumber
\end{align*}
Therefore, the series $\sum_{k\in\zz}\sum_{i\in\nn_0}h^k_i$ converges absolutely pointwise a.e. to some function $\wz{f} \in L^q$. By the Lebesgue dominated convergence theorem we deduce that $\wz{f}=\sum_{k\in\zz}\sum_{i\in\nn_0}h^k_i$ converges unconditionally in $L^q$. By \eqref{e5.11} we necessarily have $f=\wz{f}\in L^q$, which yields \eqref{ad2}.
\end{proof}




\section{Variable Anisotropic Singular Integral Operators}\label{s5}
In this section, we introduce the notion of variable anisotropic singular integral operators associated with a continuous ellipsoid cover $\Theta$ and show that such operators are bounded from $H^p(\Theta)$ to $L^p$ and from $H^p(\Theta)$ to itself for $0<p\le 1$.

Coifman and Weiss \cite[Chapter III.2]{cw71} have introduced the general notion of singular integral operators defined on arbitrary spaces of homogeneous type. By Proposition \ref{p2.4} a continuous ellipsoid cover $\Theta$ induces a quasi-distance $\rho_\Theta$ with respect to which $\R^n$ becomes a space of homogeneous type. This leads to the definition of singular integral operators associated with a continuous ellipsoid cover $\Theta$, which satisfy H\"ormander's condition, see \cite[Chapter III.2]{cw71} and \cite[Chapter I.5]{s}.

\begin{definition}\label{d4.1}
A locally square integrable function $K$ on $\Omega:=\{(x,y)\in\rn\times\rn:\,x\ne y\}$ is called a {\it variable anisotropic singular integral kernel} with respect to a continuous ellipsoid cover $\Theta$ if there exist two positive constants $c>1$ and $C$ such that
\begin{equation}\label{e4.1}
\int_{B_{\rho_\Theta}(y,\,cr)^\complement}|K(x,y)-K(x,y')|dx\le C,
\end{equation}
where $y'\in B_{\rho_\Theta}(y,r)$, $y\in\rn$ and $B_{\rho_\Theta}(y,r)$ is as in \eqref{e3.3}.

We say that $T$ is a {\it variable anisotropic singular integral operator} (VASIO) of order $0$ if $T: L^2 \to L^2$ is a bounded linear operator if there exists a kernel $K$ satisfying \eqref{e4.1} such that
$$Tf(x)=\int_\rn K(x,y)f(y)dy \qquad\text{for all } f\in C_c^\infty, \ x\not\in \supp (f).$$
\end{definition}

The fundamental theorem about singular integral operators, which holds on arbitrary spaces of homogeneous type, asserts that $T$ is also bounded from $L^1$ to weak-$L^1$. Then, the Marcinkiewicz interpolation theorem implies that $T$ is bounded from $L^q$ to $L^q$ for $1<q\le 2$. Thus, we have the following theorem, see \cite[Theorem III.2.4]{cw71} and \cite[Theorem I.3]{s}.

\begin{theorem}\label{t4.2}
Let $T$ be a {\rm VASIO} of order 0 and $1<q\le 2$. Then $T$ extends to a bounded linear operator $L^q \to L^q$.
\end{theorem}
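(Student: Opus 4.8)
The plan is to run the classical Calder\'on--Zygmund paradigm on the space of homogeneous type $(\rn,\rho_\Theta,dx)$ furnished by Proposition \ref{p2.4}: first establish a weak-type $(1,1)$ bound for $T$, and then interpolate this with the assumed $L^2\to L^2$ boundedness via the Marcinkiewicz interpolation theorem, which is available on any space of homogeneous type, to obtain boundedness $L^q\to L^q$ for all $1<q\le 2$. The interpolation step is entirely soft, so all the content sits in the weak-$(1,1)$ estimate, and it suffices to prove it on the dense subclass $L^1\cap L^2$ and then extend $T$ by density to all of $L^1$.

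To prove the weak-$(1,1)$ bound, fix $f\in L^1\cap L^2$ and $\lambda>0$, and perform a Calder\'on--Zygmund decomposition of $f$ at height $\lambda$ relative to the Hardy--Littlewood maximal operator $M_{B_{\rho_\Theta}}$. Since $|B_{\rho_\Theta}(x,r)|\sim r$ by Proposition \ref{p2.4}, Lebesgue measure is doubling with respect to $\rho_\Theta$, so the covering and stopping-time arguments of Coifman--Weiss apply and yield a splitting $f=g+\sum_i b_i$ with the following properties: each $b_i$ is supported in a ball $B_i:=B_{\rho_\Theta}(y_i,r_i)$ and satisfies $\int_{\rn} b_i(y)\,dy=0$ together with $\|b_i\|_1\ls\lambda|B_i|$; the balls $\{B_i\}$ have bounded overlap and $\sum_i|B_i|\ls\lambda^{-1}\|f\|_1$; and $\|g\|_\fz\ls\lambda$, $\|g\|_1\ls\|f\|_1$.

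For the good part, $\|g\|_2^2\le\|g\|_\fz\|g\|_1\ls\lambda\|f\|_1$, so the $L^2$ boundedness of $T$ gives
\[
\bigl|\{x:\,|Tg(x)|>\lambda/2\}\bigr|\le\frac{4}{\lambda^2}\|Tg\|_2^2\ls\frac1\lambda\|f\|_1.
\]
For the bad part, set $B_i^\ast:=B_{\rho_\Theta}(y_i,cr_i)$ with $c>1$ the Hörmander constant from \eqref{e4.1} (enlarging $c$ by a factor depending only on the quasi-triangle constant $\kappa$ if necessary, which keeps $|B_i^\ast|\sim r_i$), and let $E:=\bigcup_i B_i^\ast$, so $|E|\le\sum_i|B_i^\ast|\ls\lambda^{-1}\|f\|_1$. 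On $E^\complement$ we use the mean-zero property to write $Tb_i(x)=\int_{B_i}\bigl(K(x,y)-K(x,y_i)\bigr)b_i(y)\,dy$; then Tonelli's theorem together with \eqref{e4.1}, applied with $y=y_i$ and $y'=y\in B_i$, gives $\int_{(B_i^\ast)^\complement}|Tb_i(x)|\,dx\ls\|b_i\|_1\ls\lambda|B_i|$. Summing over $i$ yields $\int_{E^\complement}|Tb(x)|\,dx\ls\lambda\sum_i|B_i|\ls\|f\|_1$, hence $|\{x\in E^\complement:\,|Tb(x)|>\lambda/2\}|\ls\lambda^{-1}\|f\|_1$. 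Adding the estimate $|E|\ls\lambda^{-1}\|f\|_1$ and the good-part bound produces $|\{x:\,|Tf(x)|>\lambda\}|\ls\lambda^{-1}\|f\|_1$, which is the desired weak-$(1,1)$ inequality.

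I expect the main technical obstacle to be the construction of the Calder\'on--Zygmund decomposition with exactly the quantitative control needed above --- in particular the bounded overlap of the selected balls $B_i$ and the stability $|B_{\rho_\Theta}(y_i,cr_i)|\sim|B_{\rho_\Theta}(y_i,r_i)|$ of ball measures under dilation by the fixed constant $c$. Both are consequences of the comparability $|B_{\rho_\Theta}(x,r)|\sim r$ from Proposition \ref{p2.4} (one may equally carry out the decomposition using ellipsoids $\theta_{x,t}$ via Proposition \ref{p4.10}), and the only genuine subtlety is matching the intrinsic dilation radius of the decomposition balls with the fixed constant $c$ appearing in the Hörmander condition \eqref{e4.1}, which is resolved by enlarging $c$ using the quasi-triangle inequality with constant $\kappa$.
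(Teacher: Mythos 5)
Your proposal is correct and follows essentially the same route as the paper: the paper proves Theorem \ref{t4.2} by invoking the fundamental theorem for singular integrals on spaces of homogeneous type (weak $(1,1)$ via the Calder\'on--Zygmund decomposition with respect to $\rho_\Theta$, using \eqref{e4.1} and the $L^2$ bound) together with Marcinkiewicz interpolation, citing \cite{cw71} and \cite{s}. You simply unpack the proof of the cited weak-$(1,1)$ result instead of quoting it, and the details you supply (doubling from Proposition \ref{p2.4}, cancellation of the $b_i$, the H\"ormander estimate on $(B_i^\ast)^\complement$) are the standard ones.
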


\begin{remark}\label{r4.3}
To get boundedness for entire range of $1<q<\infty$, one needs to impose a symmetric variant of \eqref{e4.1} with variables $x$ and $y$ being interchanged, which by the duality yields the boundedness for $2<q<\infty$.
\end{remark}

Since we are interested in the boundedness of singular integral operators on the Hardy spaces $H^p(\Theta)$, $0<p\le 1$, we need to impose smoothness hypothesis on the kernel $K$, which is much stricter than that given by Definition \ref{d4.1}. To this end we shall extend the definition of Calder\'on-Zygmund operators in anisotropic setting which was given in \cite[Definition 9.2]{b}.

\begin{definition}\label{d4.4}
Let $s\in\nn_0$ and let $T$ be a VASIO as in Definition \ref{d4.1} with kernel $K(x,y)$ in the class $C^s$ as a function of $y$. Then we say that $T$ is a {\it VASIO\ of\ order\ $s$} if there exists a constant $C>0$ such that for any $(x,y) \in\Omega$ and for any multi-index $|\alpha| \le s$ we have
\begin{equation}\label{e4.2}
\lf|\partial^\alpha_y[K(\cdot,M_{y,\,m}\cdot)](x,M^{-1}_{y,\,m}y)\r|\le C/\rho_\Theta(x,y) \qquad\text{where } m:= -  \log_{2} \rho_\Theta(x,y).
\end{equation}
More precisely, the left hand side of \eqref{e4.2} means $|\partial^\alpha_y \wz{K}(x,M^{-1}_{y,\,m}\,y)|$, where $\wz{K}(x,y):=K(x,M_{y,\,m}\,y)$. The smallest constant $C$ satisfying \eqref{e4.2} is called a Calder\'on-Zygmund norm of $T$, which is denoted by $\|T\|_{(s)}$.
\end{definition}

Next we will show that Definition \ref{d4.4} is an extension of the class of Calder\'{o}n-Zygmund operators associated with expansive dilations, which was introduced in \cite[Definition 9.2]{b}. For this we need the following estimate on higher order derivatives under linear change of variables, which is stated implicitly in \cite[p.25]{b}.

\begin{lemma}\label{l4.7}
Let $h$ be a function in the class $C^s$, $s\in \nn_0$, defined on an open subset $U \subset \R^n$. Let $M: \R^n \to \R^n$ be a linear invertible map. Let $\tilde h$ be dilation of $h$ by $M$ defined by $\tilde h(x)=h(Mx)$ for $x\in M^{-1}U$. Then, there exists a constant $C=C(s,n)>0$ depending only on $s$ and the dimension $n$ such that
\begin{equation}\label{e4.8}
|\partial^\alpha [h(M \cdot )](M^{-1}x)| = |\partial ^\alpha \tilde h(M^{-1}x)| \le C \|M\|^{|\alpha|} \sup_{|\beta|=|\alpha|} |\partial^\beta h(x)|
\qquad\text{for }x\in U, \ |\alpha|\le s.
\end{equation}
\end{lemma}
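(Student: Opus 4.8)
The plan is to reduce the statement to the multivariate chain rule and then control the resulting combinatorial sum by pure matrix-norm estimates. First I would recall that for a linear map $M$ with matrix entries $(m_{ij})$, the partial derivatives of $\tilde h = h\circ M$ are given by the chain rule: for a first-order derivative, $\partial_j \tilde h(u) = \sum_{k=1}^n m_{kj}\,(\partial_k h)(Mu)$, and iterating, for any multi-index $\alpha$ with $|\alpha|=\ell\le s$,
\[
\partial^\alpha \tilde h(u) = \sum_{|\beta|=\ell} c_{\alpha,\beta}(M)\,(\partial^\beta h)(Mu),
\]
where each coefficient $c_{\alpha,\beta}(M)$ is a homogeneous polynomial of degree $\ell$ in the entries of $M$, with the number of terms and the integer coefficients bounded by a constant depending only on $\ell\le s$ and $n$. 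This is exactly the higher-order chain rule (a Faà di Bruno--type expansion for a linear inner function, which is considerably simpler than the general case since all derivatives of the inner function of order $\ge 2$ vanish). Setting $u = M^{-1}x$ gives $Mu = x\in U$, so the formula is valid for $x\in U$.

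Next I would estimate the coefficients. Since each entry $m_{ij}$ satisfies $|m_{ij}| \le \|M\|$ (the $(i,j)$ entry is $\langle Me_j, e_i\rangle$, bounded by $\|Me_j\|\le\|M\|$), every monomial of degree $\ell$ in the entries of $M$ is bounded in absolute value by $\|M\|^{\ell} = \|M\|^{|\alpha|}$. Hence $|c_{\alpha,\beta}(M)| \le C(s,n)\,\|M\|^{|\alpha|}$, where $C(s,n)$ absorbs the number of monomials and their integer coefficients. Summing over the finitely many $\beta$ with $|\beta|=|\alpha|$ (there are at most $C(s,n)$ of them) and bounding each $(\partial^\beta h)(x)$ by $\sup_{|\beta|=|\alpha|}|\partial^\beta h(x)|$ yields
\[
|\partial^\alpha \tilde h(M^{-1}x)| \le C(s,n)\,\|M\|^{|\alpha|}\,\sup_{|\beta|=|\alpha|}|\partial^\beta h(x)|,
\]
which is \eqref{e4.8}. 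The identity $\partial^\alpha[h(M\cdot)](M^{-1}x) = \partial^\alpha\tilde h(M^{-1}x)$ is just the definition of $\tilde h$.

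I expect the only genuinely delicate point to be stating the higher-order chain rule cleanly enough that the degree-$\ell$ homogeneity of the coefficients $c_{\alpha,\beta}(M)$ in the entries of $M$ is transparent; once that structural fact is in hand, the norm bound $|m_{ij}|\le\|M\|$ makes the rest routine. One can either invoke the combinatorial formula directly or argue by induction on $|\alpha|$: assuming the claim for order $\ell-1$, differentiating once more introduces exactly one more factor of the form $\sum_k m_{k\cdot}$ into each term, raising the homogeneity degree from $\ell-1$ to $\ell$ and multiplying the term count by a factor depending only on $n$, which closes the induction and produces a constant depending only on $s$ and $n$.
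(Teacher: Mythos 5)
Your argument is correct: since the inner map is linear, iterating the first-order chain rule gives
\[
\partial^\alpha \tilde h(u)=\sum_{k_1,\ldots,k_\ell=1}^{n} m_{k_1 j_1}\cdots m_{k_\ell j_\ell}\,(\partial_{k_1}\cdots\partial_{k_\ell}h)(Mu),
\qquad |\alpha|=\ell,
\]
and with $u=M^{-1}x$, $|m_{ij}|\le\|M\|$, and at most $n^{\ell}$ terms you get \eqref{e4.8} with the explicit constant $C=n^{s}$; the homogeneity claim you flag as the delicate point is immediate from this expansion, so there is no gap. The paper proves the lemma by a different technical route: it works coordinate-free with the total derivatives $\mathfrak D^k h(x)$ viewed as symmetric multilinear functionals, uses the identity $\mathfrak D^k\tilde h(x)(v_1,\ldots,v_k)=\mathfrak D^k h(Mx)(Mv_1,\ldots,Mv_k)$ to obtain $\|\mathfrak D^k\tilde h(M^{-1}x)\|\le\|M\|^{k}\|\mathfrak D^k h(x)\|$ in one line, and then passes back and forth between $\sup_{|\beta|=k}|\partial^\beta h|$ and $\|\mathfrak D^k h\|$ by equivalence of norms on a finite-dimensional space. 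The two proofs buy different things: the paper's version avoids all combinatorial bookkeeping and produces the factor $\|M\|^{k}$ at the level of multilinear maps, at the cost of an implicit constant coming from norm equivalence; your version is more elementary and yields an explicit constant, at the cost of writing out (or inducting on) the higher-order chain rule for a linear inner function. Either is acceptable; just make sure, if you keep the induction formulation, that you state clearly that the coefficients $c_{\alpha,\beta}(M)$ are fixed integer combinations of degree-$\ell$ monomials in the entries of $M$, independent of $h$ and $x$, so the constant depends only on $s$ and $n$.
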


\begin{proof}
For any $k=1,\ldots, s$, and $x\in U$, let $\mathfrak D^k h(x)$ be the total derivative of $h$ at $x$ of order $k$, which is a symmetric multilinear functional $\mathfrak D^k h(x): \R^{nk} = \R^n \times \dots \times \R^n \to \R$. The norm of multilinear functional is given by
\[
\|\mathfrak D^k h(x)\|:= \sup \{|\mathfrak D^k h(x)(v_1,\ldots,v_k)|:  v_i \in \R^n, |v_i|=1, i=1,\ldots, k\}.
\]
For any multi-index $\alpha=(\alpha_1,\ldots,\alpha_n) \in \nn_0^n$, $|\alpha|=k$, let $\sigma: \{1,\ldots, k\} \to \{1,\ldots, n\}$ be the mapping that takes each value $j=1,\ldots, n$ exactly $\alpha_j$ times. Then, the partial and total derivates satisfy the relationship
\[
\partial^\alpha h(x) = \mathfrak D^kh(x)(e_{\sigma(1)},\ldots, e_{\sigma(k)}),
\]
where $e_1,\ldots,e_n$ denote the standard basis of $\R^n$. An inductive application of the chain rule yields a convenient formula for total derivatives
\[
\mathfrak D^k \tilde h(x)(v_1,\ldots, v_k) = \mathfrak D^k h(Mx)(Mv_1,\ldots, Mv_k)
\]
for any vectors $v_1,\ldots, v_k \in \R^n$. Consequently, we have
\[
\| \mathfrak D^k \tilde h(M^{-1} x)\| \le \|M\|^k \|\mathfrak D^k h(x)\| \qquad\text{for }x\in U.
\]
Since all norms in finite dimensional space are equivalent we have
\[
\sup_{|\beta|=k} |\partial^\beta h(x)| \sim \|\mathfrak D^k h(x)\|.
\]
Since the same equivalence holds for $\tilde h$,
\[
\sup_{|\beta|=k}  |\partial ^\beta \tilde h(M^{-1}x)|  \ls \|M\|^k \|\mathfrak D^k h(x)\|
\ls ||M||^k \sup_{|\beta|=k} |\partial^\beta h(x)|,
\]
and \eqref{e4.8} follows.
\end{proof}

\begin{example}\label{r4.5}
Consider a $n\times n$ real matrix $A$ with eigenvalues $\lz$ satisfying $|\lz| > 1$. By \cite[Lemma 2.2]{b}, there exists an ellipsoid $\Delta:=\{x\in\rn:\,|Px|<1\}$, where $P$ is some invertible $n\times n$ matrix, such that $B_k\subset B_{k+1}$, where $B_k:=A^k\Delta$ for $k\in \zz$. Moreover, the volume $|B_k|=b^k$,  where $b:=|\det A|$. Then we can define a semi-continuous ellipsoid cover in the sense of \cite[Definition 2.5]{ddp} by
\begin{equation}\label{ex1}
 \Theta:=\{ \theta_{x,\,- k \log_2 b}:=x+A^kP^{-1}(\mathbb{B}^n): x\in\rn,k\in\mathbb{Z}\}=\{x+B_k:\,x\in\rn,k\in\mathbb{Z}\}.
\end{equation}
We can easily turn $\Theta$ into continuous ellipsoid cover by setting for all $x\in \R^n$, $t\in \R$,
\[
\theta_{x,t} = x+ B_k, \qquad\text{where } k = - \lfloor t/ \log_2 b \rfloor.
\]
 According to \cite[Definition 2.3]{b} a homogeneous quasi-norm $\rho_A$ associated with expansive dilation $A$ is defined by \begin{equation*}
\rho_A(x):=\sum_{k\in\zz}b^k\textbf{1}_{B_{k+1}\setminus B_k}(x) \qquad\text{for }x\in \R^n,
\end{equation*}
    where $b:=|\det A|$. Let $\rho_{\Theta}$ be the quasi-distance corresponding to the cover $\Theta$ given by \eqref{e2.3}. A simple calculation shows that
 \begin{equation}\label{ex4}
\rho_{\Theta}(x,y)= \rho_\Theta ((x-y)/2,-(x-y)/2) = b \rho_A((x-y)/2) \qquad\text{for all }x,y \in \R^n.
\end{equation}
In other words, for any $k\in \Z$,
\begin{equation}\label{ex8}
\rho_\Theta(x,y)=b^k \iff \frac{x-y}2 \in B_k \setminus B_{k-1}.
\end{equation}
Thus, $\rho_\Theta$ is a translation invariant quasi-distance on $\R^n$, which reflects invariance of ellipsoid cover $\Theta$ under translations.

By Definition \ref{d4.4}, \eqref{ex1}, and \eqref{ex8}, a kernel $K$ of VASIO of order $s$ associated to the ellipsoid cover $\Theta$ satisfies for any $(x,y)\in\Omega$,
    \begin{equation*}
 \lf|\partial^\alpha_y[K(\cdot,A^{k}P^{-1}\cdot)](x,PA^{-k}y)\r|\le C/\rho_\Theta(x,y)= Cb^{-k},\ \ \ \ \ |\alpha|\le s,
    \end{equation*}
  where integer $k\in\mathbb{Z}$ satisfies $x-y\in 2(B_{k}\setminus B_{k-1})$.
Hence, by \eqref{ex4} and Lemma \ref{l4.7}, the kernel $K$ coincides with the kernel of Calder\'{o}n-Zygmund operators of order $s$ associated with expansive dilation $A$
  \begin{equation}\label{e4.3}
   \lf|\partial^\alpha_y[K(\cdot,A^{k}\cdot)](x,A^{-k}y)\r|\le C/\rho_A(x-y)= Cb^{-k},\ \ \ \ \ |\alpha|\le s,
    \end{equation}
where integer $k\in\mathbb{Z}$ satisfies $x-y\in B_{k}\setminus B_{k-1}$, see  \cite[Definition 9.2]{b}.

When $A=\lz\rm{I}$ with $|\lz|>1$ we can take an equivalent quasi-norm $\rho$ given by $\rho(x)=|x|^n$. Then, it is not difficult to show that the kernel of a Calder\'{o}n-Zygmund operator of order $s$ satisfying \eqref{e4.3} coincides with the classical definition, see \cite[Section III.7]{gf}, \cite[Section 7.4]{mc97}, and \cite[Section III.3]{s}. That is, there exists $C>0$ such that for any $(x,y)\in\Omega$,
   \begin{equation*}
   \lf|\partial^\alpha_yK(x,y)\r|\le C|x-y|^{-n-\alpha},\ \ \ \ \ |\alpha|\le s.
   \end{equation*}
\end{example}

Next we show that the definition of singular integral operators in the setting of continuous ellipsoid covers is consistent with the Calder\'on-Zygmund operators on spaces of homogenous type, see Proposition \ref{p2.4}.

\begin{proposition}\label{p4.6}
Let $K$ be kernel of VASIO of order $1$. Then, there exist positive constants $\delta$ and $C$ such that for all $x \ne y  \in \R^n$ we have
\begin{align}\label{e4.6}
|K(x,y)|&\le C/\rho_\Theta(x,y),
\\
\label{e4.7}
|K(x,y)-K(x,y')| &\le C \frac{[\rho_\Theta(y,y')]^\delta}{[\rho_\Theta(x,y)]^{1+\delta}}
\qquad\text{if \ } \rho_\Theta(y,y') \le \frac{1}{2\kappa} \rho_\Theta(x,y).
\end{align}
In particular, the kernel $K$ satisfies \eqref{e4.1}.
\end{proposition}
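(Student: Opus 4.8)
The plan is to derive the pointwise kernel bound \eqref{e4.6} and the Hölder-type bound \eqref{e4.7} directly from the derivative estimate \eqref{e4.2} in Definition \ref{d4.4}, using the dictionary between $\rho_\Theta$-balls and ellipsoids (Proposition \ref{p4.10}, Proposition \ref{p4.9}, Lemma \ref{l4.12}) together with the change-of-variables inequality of Lemma \ref{l4.7}. First I would fix $x\ne y$, set $r:=\rho_\Theta(x,y)$ and $m:=-\log_2 r$, and write $M:=M_{y,m}$. The $\alpha=0$ case of \eqref{e4.2} reads $|K(x,y)|=|\wz K(x,M^{-1}y)|\le C/r$, which is exactly \eqref{e4.6}; so no work is needed there beyond unwinding notation. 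The real content is \eqref{e4.7}.

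For \eqref{e4.7}, I would work with the rescaled kernel $\wz K(x,u):=K(x,M u)$, so that $\wz K$ is $C^1$ in $u$ on the appropriate domain and, by the $|\alpha|=1$ case of \eqref{e4.2}, $|\nabla_u \wz K(x,M^{-1}y)|\le C/r$. Given $y'$ with $\rho_\Theta(y,y')\le \frac{1}{2\kappa}\rho_\Theta(x,y)$, write $K(x,y)-K(x,y')=\wz K(x,M^{-1}y)-\wz K(x,M^{-1}y')$ and apply the mean value theorem along the segment joining $M^{-1}y$ to $M^{-1}y'$. This produces a factor $|M^{-1}(y-y')|=|M^{-1}y-M^{-1}y'|$ times a supremum of $|\nabla_u\wz K|$ over that segment. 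Two estimates are then needed: (a) a bound on $|M^{-1}(y-y')|$ in terms of the ratio $\rho_\Theta(y,y')/\rho_\Theta(x,y)$, and (b) control of $|\nabla_u \wz K|$ at the intermediate points of the segment, which a priori are not of the special form $M_{z,\cdot}^{-1}z$ that \eqref{e4.2} directly addresses.

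For (a): since $y'\in B_{\rho_\Theta}(y, \rho_\Theta(y,y'))$ (up to the quasi-triangle constant) and $\rho_\Theta(y,y')\le \tfrac{1}{2\kappa} r$, Proposition \ref{p4.10} gives an ellipsoid $\theta_{y,t}$ with $|\theta_{y,t}|\sim \rho_\Theta(y,y')$ containing $y'$, so $y'-y\in M_{y,t}(\mathbb B^n)$. Comparing $M_{y,t}$ with $M=M_{y,m}$ via the shape condition \eqref{e2.2} (both are centered at $y$, with scales differing by $\sim \log_2(r/\rho_\Theta(y,y'))$), one gets $\|M^{-1}M_{y,t}\|\lesssim (\rho_\Theta(y,y')/r)^{a_6}$, hence $|M^{-1}(y'-y)|\le \|M^{-1}M_{y,t}\|\lesssim (\rho_\Theta(y,y')/r)^{a_6}$. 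This is where the exponent $\delta:=a_6$ will come from. For (b): since $\rho_\Theta(y,y')$ is much smaller than $r=\rho_\Theta(x,y)$, the quasi-triangle inequality forces every point $y''$ on the segment from $y$ to $y'$ to satisfy $\rho_\Theta(x,y'')\sim r$, so by Lemma \ref{l4.12} (or directly by Proposition \ref{p4.9} and the shape condition) the matrix $M_{y'',m''}$ with $m''=-\log_2\rho_\Theta(x,y'')$ is comparable to $M=M_{y,m}$, i.e. $\|M^{-1}M_{y'',m''}\|$ and $\|M_{y'',m''}^{-1}M\|$ are bounded by absolute constants. Feeding this into \eqref{e4.2} applied at $y''$, together with Lemma \ref{l4.7} to pass between differentiation with respect to $M_{y'',m''}$-coordinates and $M$-coordinates, yields $|\nabla_u\wz K(x,M^{-1}y'')|\lesssim 1/r$ uniformly along the segment. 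Combining (a), (b), and the mean value theorem gives $|K(x,y)-K(x,y')|\lesssim (1/r)\cdot(\rho_\Theta(y,y')/r)^{\delta} = [\rho_\Theta(y,y')]^\delta/[\rho_\Theta(x,y)]^{1+\delta}$, which is \eqref{e4.7}.

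Finally, \eqref{e4.1} follows from \eqref{e4.7} by a standard annular decomposition: for $x\in B_{\rho_\Theta}(y,cr)^\complement$ one has $\rho_\Theta(x,y)\gtrsim r\ge 2\kappa\,\rho_\Theta(y,y')$ once $c$ is chosen large enough relative to $\kappa$, so \eqref{e4.7} applies; decomposing $B_{\rho_\Theta}(y,cr)^\complement$ into dyadic annuli $B_{\rho_\Theta}(y,2^{j+1}cr)\setminus B_{\rho_\Theta}(y,2^jcr)$, $j\ge 0$, each of Lebesgue measure $\sim 2^j cr$ by Proposition \ref{p2.4}, and integrating the bound $C[\rho_\Theta(y,y')]^\delta/[\rho_\Theta(x,y)]^{1+\delta}\lesssim r^\delta/(2^j cr)^{1+\delta}$ over the $j$-th annulus contributes $\lesssim 2^{-j\delta}$; summing the geometric series over $j\ge 0$ gives the uniform bound $C$ in \eqref{e4.1}.

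I expect the main obstacle to be step (b): carefully justifying that the intermediate points $y''$ on the Euclidean segment from $M^{-1}y$ to $M^{-1}y'$ (in the rescaled variable) correspond, after mapping back by $M$, to points whose $\rho_\Theta$-distance to $x$ stays comparable to $r$, and that the associated scaling matrices $M_{y'',m''}$ are uniformly comparable to $M_{y,m}$ — this is what makes the derivative hypothesis \eqref{e4.2}, stated only at the distinguished points, usable along a whole segment. The bookkeeping with the quasi-triangle constant $\kappa$ and the choice of $c$ in \eqref{e4.1}, and keeping track of which $a_4,a_6$ powers appear, is the secondary technical nuisance.
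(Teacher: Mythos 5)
Your proposal is correct and follows essentially the same route as the paper: \eqref{e4.6} from the $\alpha=0$ case of \eqref{e4.2}, then \eqref{e4.7} via the mean value theorem for the kernel rescaled by $M_{y,m}$, with the gradient at intermediate points controlled by comparing $M_{\xi,l}$ to $M_{y,m}$ through Lemma \ref{l4.7} and the shape condition, the factor $|M_{y,m}^{-1}(y-y')|\lesssim(\rho_\Theta(y,y')/\rho_\Theta(x,y))^{a_6}$ giving $\delta=a_6$, and \eqref{e4.1} by the dyadic annulus argument with $|B_{\rho_\Theta}(y,r)|\sim r$. The only detail to tighten is your step (b): the bound $\rho_\Theta(x,y'')\sim\rho_\Theta(x,y)$ for segment points $y''$ requires first $\rho_\Theta(y,y'')\lesssim\rho_\Theta(y,y')$, which comes from convexity of the ellipsoids centered at $y$ (via Proposition \ref{p4.9}), exactly as the paper notes, not from the quasi-triangle inequality alone.
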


\begin{proof}
By \eqref{e4.2} we have \eqref{e4.6}. Next we prove \eqref{e4.7}. For a fixed $x,y\in\rn$ with $x\neq y$, let
$r:= (\kappa + 1) \rho_\Theta(x,y)$. By Proposition \ref{p4.10} there exists $m\in \R$ such that
\begin{equation}\label{e4.35}
 B_{\rho_\Theta}(x,r)
\subset\theta_{x,m}
\qquad\text{and}\qquad
2^{-m} \sim |\theta_{x, m}| \sim r.
\end{equation}
Define a rescaled kernel $\wz K(u ,v ):= K(u ,M_{y,\,m} v)$, $u,v \in\R^n$.

Take any $y' \in \R^n$ such that $\rho_\Theta(y,y') \le \frac{1}{2\kappa} \rho_\Theta(x,y)$.
By Lagrange's mean value theorem, there exists some $\xi$ on the segment between $y$ and $y'$ such that
\begin{align*}
|K(x,y)-K(x,y')|&=\lf|\wz K(x,M^{-1}_{y,\,m}y)-\wz K(x,M^{-1}_{y,\,m}y')\r|\\
&=\lf|\sum_{|\alpha|=1}\partial^\alpha_y\wz K\lf(x,M^{-1}_{y,\,m}\xi\r)(M^{-1}_{y,\,m}y-M^{-1}_{y,\,m}y')^\alpha\r|\nonumber\\
&\ls\sup_{|\alpha|=1}\lf|\partial^\alpha_y [K(\cdot,M_{y,\,m}\cdot)](x,M^{-1}_{y,\,m}\xi)\r|\lf|M^{-1}_{y,\,m}(y-y')\r|.\nonumber
\end{align*}
Let $l:=- \log_{2} \rho_\Theta(x, \xi)$. By Lemma \ref{l4.7} and Definition \ref{d4.4}, we have
\begin{align}\label{e4.13}
&|K(x,y)-K(x,y')|\\
&\hs\ls\sup_{|\alpha|=1}\lf|\partial^\alpha_y [K(\cdot,M_{\xi,\,l }M_{\xi,\,l}^{-1} M_{y,\,m}\cdot)]
(x,(M^{-1}_{\xi,\,l }M_{y,\,m})^{-1}(M_{\xi,\,l})^{-1}\xi)\r|
\lf|M^{-1}_{y,\,m}(y-y')\r|\nonumber\\
&\hs\ls\|M^{-1}_{\xi,\,l }M_{y,\,m}\|\sup_{|\alpha|=1}|\partial^\alpha_y [K(\cdot,M_{\xi,\,l }\cdot)](x,M^{-1}_{\xi,\,l }\xi)|\lf|M^{-1}_{y,\,m}(y-y')\r|\nonumber\\
&\hs\ls\|M^{-1}_{\xi,\,l }M_{y,\, m}\|
\frac1{\rho_\Theta(x,\xi)}\lf|M^{-1}_{y,\,m}(y-y')\r|.
\nonumber
\end{align}

Since $\rho_\Theta(y,\xi)\le\rho_\Theta(y,y')$ (convexity of ellipsoids) we have
\begin{align*}
\rho_\Theta(x,y)&\le\kappa[\rho_\Theta(x,\xi)+\rho_\Theta(y,\xi)]
\le\kappa[\rho_\Theta(x,\xi)+\rho_\Theta(y,y')]\\
&\le\kappa\lf[\rho_\Theta(x,\xi)+\frac1{2\kappa}\rho_\Theta(x,y)\r].
\end{align*}
Hence,
\begin{equation}\label{e4.14}
\rho_\Theta(x,y)\le 2\kappa\rho_\Theta(x,\xi).
\end{equation}
Likewise,
\[
 \rho_\Theta(x,\xi) \le \kappa [\rho_\Theta(x,y)+\rho_\Theta(y,\xi)] \le (\kappa + 1/2) \rho_\Theta(x,y).
\]
Since $\xi \in B_{\rho_\Theta}(x,r) \subset \theta_{x,\, m}$ we have $\theta_{x, \, m} \cap \theta_{\xi, \, l} \ne \emptyset$.
Since $ \rho_\Theta(x,y) \sim  \rho_\Theta(x,\xi)$ we have $2^{-m} \sim 2^{-l}$ and hence $|m-l| \ls 1$. By \eqref{e2.2} we deduce that
\begin{equation}\label{e4.40}
\|M^{-1}_{\xi,\,l }M_{y,\, m}\| \le
\begin{cases}
a_5 2^{-a_6(m-l)}  & m \ge l,\\
(1/a_3) 2^{a_4(l-m) }& l \ge  m,
\end{cases}
\end{equation}
and hence $\|M^{-1}_{\xi,\,l }M_{y,\, m}\| \ls 1$.

By Lemma \ref{l4.8} there exists $k\in\mathbb{Z}$ such that $y'\in\theta_{y,\,k\gamma}\setminus\theta_{y,\,(k+1)\gamma}$. By Proposition \ref{p4.9} and \eqref{e4.35}
\[
  2^{-k\gamma} \sim \rho_1(y,y') \sim \rho_\Theta(y,y') \ls \rho_\Theta(x,y) \sim 2^{-m}.
\]
This implies that there exists a constant $\eta>0$
$$m-k\gamma\le\eta.$$
Since $M^{-1}_{y,\,k\gamma}(y-y')\in\mathbb{B}^n$, the property \eqref{e2.2} implies that
\begin{align*}
|M^{-1}_{y,\,m}(y-y')|&=|M^{-1}_{y,\,m}
M_{y,\,k\gamma}M^{-1}_{y,\,k\gamma}(y-y')|
\le\|M^{-1}_{y,\,m}M_{y,\,k \gamma}\|  |M^{-1}_{y,\,k\gamma}(y-y')|\\
&=\|M^{-1}_{y,\,m}M_{y,\,k\gamma+\eta}
M^{-1}_{y,\,k\gamma+\eta}M_{y,\,k \gamma}\|  |M^{-1}_{y,\,k\gamma}(y-y')|\\
&\le\|M^{-1}_{y,\,m}M_{y,\,k\gamma+\eta}\|
\|M^{-1}_{y,\,k\gamma+\eta}M_{y,\,k \gamma}\|  |M^{-1}_{y,\,k\gamma}(y-y')|\\
&\ls 2^{-a_6(k\gamma -m)}
\sim \frac{[\rho_\Theta(y,y')]^{a_6}}{[\rho_\Theta(x,y)]^{a_6}}.
\end{align*}
Combining this with \eqref{e4.13}, \eqref{e4.14}, and \eqref{e4.40} yields
$$|K(x,y)-K(x,y')|\ls\frac{1}{\rho_\Theta(x,y)}
\frac{[\rho_\Theta(y,y')]^{a_6}}{[\rho_\Theta(x,y)]^{a_6}}
=\frac{[\rho_\Theta(y,y')]^{a_6}}
{[\rho_\Theta(x,y)]^{1+a_6}}.$$
Therefore, \eqref{e4.7} holds with $\delta=a_6$.

Finally, the fact that $K$ satisfies \eqref{e4.1} follows from general results for spaces of homogeneous type. More precisely, we claim that \eqref{e4.1} with the constant $c=2\kappa$.
Indeed, take $y'\in B_{\rho_\Theta}(y,r)$ for some $r>0$. By \eqref{e4.7} and Proposition \ref{p2.4}
we have
\begin{align*}
\int_{B_{\rho_\Theta}(y,\,2\kappa r)^\complement}|K(x,y)-K(x,y')|dx
&\ls \int_{B_{\rho_\Theta}(y,\,2\kappa r)^\complement}
\frac{r^\delta}{[\rho_\Theta(x,y)]^{1+\delta}}dx\\
&=\sum_{i=1}^\fz\int_{B_{\rho_\Theta}(y,\,2^{i+1}\kappa r)
\setminus{B_{\rho_\Theta}(y,\,2^{i}\kappa r)}}
\frac{r^\delta}{[\rho_\Theta(x,y)]^{1+\delta}}dx\\
&\le \sum_{i=1}^\fz\frac{r^{\delta}}{(2^i\kappa r)^{(1+\delta)}}
|B_{\rho_\Theta}(y,2^{i+1}\kappa r)|\\
&\sim \sum_{i=1}^\fz 2^{-i\delta}\ls 1.
\end{align*}
\end{proof}

The following lemma is a convenient strengthening of Definition \ref{d4.4}.

\begin{lemma}\label{l4.11}
Suppose that $T$ is a VASIO of order $s$ as in Definition \ref{d4.4}. Then, there exists a constant $C>0$ such that for any
$z\in \R^n$, $t\in \R$, $k\in \nn$,  $x\in\theta_{z,\,t-(k+1)\gamma}\setminus\theta_{z,\,t-k\gamma}$, and $y\in\theta_{z,\,t}$, we have
\begin{equation*}
\lf|\partial^\alpha_y[K(\cdot,M_{z,\,t-k\gamma}\cdot)](x,M^{-1}_{z,\,t-k\gamma}y)\r|\le C2^{t-k\gamma}\qquad {\rm for}\ |\alpha|\le s.
\end{equation*}
Here, $\gamma$ is as in Lemma \ref{l2.2}(ii) and the constant $C$ depends only on $\|T\|_{(s)}$ as in Definition \ref{d4.4} and $\mathbf{p}(\Theta)$ as in Definition \ref{d2.1}.
\end{lemma}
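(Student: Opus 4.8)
The plan is to reduce the desired bound to the defining inequality \eqref{e4.2} of a VASIO of order $s$ by replacing the dilation $M_{y,\,m}$ appearing there (with $m=-\log_2\rho_\Theta(x,y)$) by the dilation $M_{z,\,t-k\gamma}$, and then to control the discrepancy between these two matrices via the shape condition \eqref{e2.2} together with the scale estimates of Lemma \ref{l4.12}.

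First I would locate the relevant scale. Since $x\in\theta_{z,\,t-(k+1)\gamma}\setminus\theta_{z,\,t-k\gamma}$ and $y\in\theta_{z,\,t}$, Lemma \ref{l4.12} gives $\rho_\Theta(x,z)\sim\rho_\Theta(x,y)\sim 2^{-t+k\gamma}$, so that $1/\rho_\Theta(x,y)\sim 2^{\,t-k\gamma}$; thus the right-hand side of the claim equals, up to a constant depending only on $\mathbf{p}(\Theta)$, the quantity $1/\rho_\Theta(x,y)$. Writing $m:=-\log_2\rho_\Theta(x,y)$ as in Definition \ref{d4.4}, the same estimate yields $2^{-m}\sim 2^{-t+k\gamma}$, hence $|m-(t-k\gamma)|\le c_0$ for a constant $c_0=c_0(\mathbf{p}(\Theta))$. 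Next, by Lemma \ref{l4.8} we have $\theta_{z,\,t}\subset\theta_{z,\,t-\gamma}\subset\dots\subset\theta_{z,\,t-k\gamma}$, so $y\in\theta_{z,\,t-k\gamma}$; since also $y\in\theta_{y,\,m}$, the ellipsoids $\theta_{z,\,t-k\gamma}$ and $\theta_{y,\,m}$ intersect. Because their scales differ by at most $c_0$, applying \eqref{e2.2} to this intersecting pair (splitting into the cases $m\ge t-k\gamma$ and $m<t-k\gamma$, and in each case putting the larger scale in the appropriate slot of \eqref{e2.2}) gives
\[
\lf\|M^{-1}_{y,\,m}M_{z,\,t-k\gamma}\r\|\le C
\]
with $C$ depending only on $\mathbf{p}(\Theta)$.

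Then I would carry out the change of dilation. Fix a multi-index $|\alpha|\le s$, set $N:=M^{-1}_{y,\,m}M_{z,\,t-k\gamma}$, and let $h(w):=K(x,M_{y,\,m}w)$, which is of class $C^s$ in a neighborhood of $w=M^{-1}_{y,\,m}y$ since $x\ne y$. Then $K(x,M_{z,\,t-k\gamma}v)=h(Nv)$, and the substitution $v=M^{-1}_{z,\,t-k\gamma}y$ corresponds exactly to $w=Nv=M^{-1}_{y,\,m}y$. Applying Lemma \ref{l4.7} with the matrix $N$ gives
\[
\lf|\partial^\alpha_y[K(\cdot,M_{z,\,t-k\gamma}\cdot)](x,M^{-1}_{z,\,t-k\gamma}y)\r|
\le C\|N\|^{|\alpha|}\sup_{|\beta|=|\alpha|}\lf|\partial^\beta_y[K(\cdot,M_{y,\,m}\cdot)](x,M^{-1}_{y,\,m}y)\r|.
\]
By Definition \ref{d4.4} the supremum is bounded by $\|T\|_{(s)}/\rho_\Theta(x,y)$; by the previous paragraph $\|N\|^{|\alpha|}\le C$; and by Step~1, $1/\rho_\Theta(x,y)\sim 2^{\,t-k\gamma}$. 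Combining these three facts yields the claimed bound. I expect the only delicate point to be the verification that $\theta_{y,\,m}$ and $\theta_{z,\,t-k\gamma}$ intersect and have comparable scales, so that \eqref{e2.2} can be invoked to bound $\|N\|$; everything else is routine bookkeeping of the evaluation points under the linear substitution $v\mapsto Nv$.
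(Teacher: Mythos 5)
Your proposal is correct and follows essentially the same route as the paper: Lemma \ref{l4.12} to get $\rho_\Theta(x,y)\sim 2^{-t+k\gamma}$ and $|m-(t-k\gamma)|\lesssim 1$, the shape condition \eqref{e2.2} applied to the intersecting pair $\theta_{y,\,m}$, $\theta_{z,\,t-k\gamma}$ (meeting at $y$) to bound $\|M^{-1}_{y,\,m}M_{z,\,t-k\gamma}\|$, and then Lemma \ref{l4.7} to transfer the bound from Definition \ref{d4.4}. No gaps; your bookkeeping of the evaluation point under the substitution matches the paper's computation exactly.
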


\begin{proof}
By Lemma \ref{l4.12} we have $x\in \theta_{y,\, t-(k+2) \gamma}\setminus\theta_{y,\,t-(k-1) \gamma}$ and
\begin{equation}\label{e4.20}
\rho_\Theta(x,\,z)\sim\rho_{\Theta}(x,\,y) \sim 2^{-t+k\gamma}.
\end{equation}
By Definition \ref{d4.4} we have
\begin{equation}
\lf|\partial^\alpha_y[K(\cdot,M_{y,\,m}\cdot)](x,M^{-1}_{y,\,m}y)\r|\le C/\rho_\Theta(x,y)\le C2^{m}, \label{e4.30}
\end{equation}
where $m=-  \log_2 \rho_\Theta(x,y) $. From \eqref{e4.20} it follows that $2^{-m} = \rho_\Theta(x,y) \sim 2^{-t+k\gamma}$. Hence, there exists a constant $\eta>0$
\begin{equation}\label{e4.21}
|m-(t-k\gamma)|\le \eta.
\end{equation}
Define $M:= M^{-1}_{y,\,m}M_{z,\,t-k\gamma}$. Since $\theta_{y,\, m} \cap \theta_{z,\, t- k\gamma} \ne \emptyset$, by \eqref{e2.2} and  \eqref{e4.21} we deduce that
\[
||M|| \le
\begin{cases}
a_5  & m \le t-k\gamma,\\
2^{a_4\eta}/a_3 & t-k\gamma \le m.
\end{cases}
\]
Hence, by Lemma \ref{l4.7}, \eqref{e4.30}, and \eqref{e4.21}, we conclude that for $|\alpha|\le s$,
\begin{align*}
&\lf|\partial^\alpha_y[K(\cdot,M_{z,\,t-k\gamma}\cdot)](x,M^{-1}_{z,\,t-k\gamma}y)\r|\\
&\hs=\lf|\partial^\alpha_y[K(\cdot,M_{y,\,m} M^{-1}_{y,\,m} M_{z,\,t-k\gamma}\cdot)](x,(M^{-1}_{y,\,m} M_{z,\,t-k\gamma})^{-1} M_{y,\,m}^{-1}
y)\r|\\
&\hs\ls ||M_{y,\,m}^{-1} M_{z,\,t-k\gamma}\|^{|\alpha|}\lf|\partial^\alpha_y[K(\cdot,M_{y,\,m}\cdot)]
(x,M^{-1}_{ y,\,m}y)\r|\\
&\hs\le C (2^{a_4\eta}/a_3 )^s 2^{m}
\le C 2^{\eta} (2^{a_4\eta}/a_3 )^s  2^{t-k\gamma}.
\end{align*}
This finishes the proof of Lemma \ref{l4.11}.
\end{proof}

Our ultimate goal is to show that anisotropic Calder\'{o}n-Zygmund operators $T$ are bounded on $H^p(\Theta)$. Generally, we can not expect this unless we also assume that $T$ preserves vanishing moments. Hence, we adopt the following definition motivated by \cite[Definition 9.4]{b}.

\begin{definition}\label{d4.13}
Let $s\in\nn$ and $1<q<\infty$. We say that a VASIO $T$ of order $s$ satisfies
\begin{equation*}
T^*(x^\alpha)=0 \qquad\text{for all }
|\alpha|\le l,
\end{equation*}
where $l<a_6s /a_4$, if for any $f\in L^q$ with compact support with vanishing moments $\int_\rn f(x)x^\alpha dx=0$ for all $|\alpha|<s$, we have
$$\int_\rn Tf(x)x^\alpha dx=0 \qquad\text{for all }
|\alpha|\le l.$$
\end{definition}

When the continuous ellipsoid cover $\Theta$ comes from an expansive dilation $A$ as in Example \ref{r4.5}, then Definition \ref{d4.13} overlaps with \cite[Definition 9.4]{b}. In the isotropic setting, when $A=\lambda \rm{I}$, $\lambda>1$, this definition coincides with the analogous property of vanishing moments of $T$ investigated by Coifman and Meyer in \cite[Chapter 7.4]{mc97}.

The actual value of $q$ is not relevant in Definition \ref{d4.13} as we merely need that $T:L^q \to L^q$ is bounded.
However, the requirement that $l< a_6s/a_4$ is essential to guarantee that the integrals $\int_\rn Tf(x)x^\az\,dx$ are well defined for all $|\alpha|\le l$. This is a consequence of the following lemma.

\begin{lemma}\label{decay}
Let $l,s\in\nn$, $1<q<\infty$. Let $T$ be a {\rm VASIO} of order $s$. Suppose that $f\in L^q$ satisfies $\supp f \subset \theta_{z,t}$ for some $z\in \R^n$, $t\in \R$, and $\int_\rn f(x)x^\alpha dx=0$ for all $|\alpha|<s$. Then, for some $C>0$ depending only $\|T\|_{(s)}$  and $\mathbf{p}(\Theta)$,
\begin{equation}\label{decay1}
|Tf(x)|\le C ||f||_q |\theta_{z,\,t}|^{-1/{q}}2^{-k\gamma(1+a_6s)}
\qquad\text{for }x\in \theta_{z,\,t-(k+1)\gamma}\setminus\theta_{z,\,t-k\gamma},\ k\in \nn.
\end{equation}
In particular, if $l< a_6s/a_4$, then
\begin{equation}\label{decay2}
\int_\rn|Tf(x)|(1+|x|^l) \,dx<\infty.
\end{equation}
\end{lemma}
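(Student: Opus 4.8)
The plan is to prove the pointwise bound \eqref{decay1} by a Taylor-expansion argument adapted to the ellipsoid cover, and then obtain \eqref{decay2} by summing \eqref{decay1} over the annular decomposition of $\rn$ supplied by Lemma \ref{l4.8}.

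To prove \eqref{decay1}, fix $k\in\nn$ and $x\in\theta_{z,\,t-(k+1)\gamma}\setminus\theta_{z,\,t-k\gamma}$. Since $k\ge1$ and the ellipsoids $\theta_{z,\,t-j\gamma}$ are increasing in $j$ (as in the proof of Lemma \ref{l4.8}), we have $\supp f\subset\theta_{z,\,t}\subset\theta_{z,\,t-k\gamma}$, so $x\notin\supp f$ and the kernel representation gives $Tf(x)=\int_{\theta_{z,\,t}}K(x,y)f(y)\,dy$. Put $m:=t-k\gamma$ and $\Phi(v):=K(x,M_{z,\,m}v)$, so that $K(x,y)=\Phi(M^{-1}_{z,\,m}y)$. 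By Lemma \ref{l4.11}, $|\partial^\alpha\Phi(M^{-1}_{z,\,m}y)|\le C2^{m}$ for every $y\in\theta_{z,\,t}$ and $|\alpha|\le s$; by convexity of $\theta_{z,\,t}$ the same bound holds along the whole segment from $M^{-1}_{z,\,m}z$ to $M^{-1}_{z,\,m}y$, since that segment is the image under $M^{-1}_{z,\,m}$ of $[z,y]\subset\theta_{z,\,t}$. Taylor's formula at $M^{-1}_{z,\,m}z$ then writes $\Phi(M^{-1}_{z,\,m}y)=P(y)+R(y)$, where $P(y)=\sum_{|\alpha|<s}\tfrac{1}{\alpha!}\partial^\alpha\Phi(M^{-1}_{z,\,m}z)\,(M^{-1}_{z,\,m}(y-z))^\alpha$ is a polynomial in $y$ of degree $<s$ and $|R(y)|\le C2^{m}|M^{-1}_{z,\,m}(y-z)|^{s}$. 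The vanishing moments of $f$ annihilate $\int Pf$, so $|Tf(x)|\le C2^{m}\int_{\theta_{z,\,t}}|M^{-1}_{z,\,m}(y-z)|^{s}|f(y)|\,dy$. Since $M^{-1}_{z,\,t}(y-z)\in\mathbb{B}^n$ for $y\in\theta_{z,\,t}$ and $z\in\theta_{z,\,m}\cap\theta_{z,\,t}$ with $m\le t$, the shape condition \eqref{e2.2} gives $|M^{-1}_{z,\,m}(y-z)|\le\|M^{-1}_{z,\,m}M_{z,\,t}\|\le a_5 2^{-a_6k\gamma}$; H\"older's inequality gives $\int_{\theta_{z,\,t}}|f|\le\|f\|_q|\theta_{z,\,t}|^{1-1/q}$; and \eqref{e2.1} gives $2^{m}\sim|\theta_{z,\,t}|^{-1}2^{-k\gamma}$. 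Multiplying these estimates produces exactly \eqref{decay1}.

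For \eqref{decay2}, use the decomposition $\rn=\theta_{z,\,t}\cup\bigcup_{k\in\nn_0}(\theta_{z,\,t-(k+1)\gamma}\setminus\theta_{z,\,t-k\gamma})$ from Lemma \ref{l4.8} and split the integral accordingly. For any fixed $k_0\in\nn$ the set $\theta_{z,\,t-k_0\gamma}$ is bounded, the weight $1+|x|^l$ is bounded on it, and $Tf\in L^1_{\loc}$ (by the $L^q$-boundedness of $T$ applied to the compactly supported $f$); hence the contribution of $\theta_{z,\,t-k_0\gamma}$ is finite. On the annulus $S_k:=\theta_{z,\,t-(k+1)\gamma}\setminus\theta_{z,\,t-k\gamma}$ with $k\ge k_0$ large, \eqref{e2.1} gives $|S_k|\ls 2^{k\gamma}|\theta_{z,\,t}|$, while \eqref{e4.16} gives $\rho_\Theta(x,z)\sim2^{-t+k\gamma}\ge1$, so Lemma \ref{l2.6} yields $|x|\le|x-z|+|z|\ls 2^{a_4k\gamma}$ on $S_k$, whence $1+|x|^l\ls 2^{la_4k\gamma}$. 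Inserting the pointwise bound \eqref{decay1} gives
\[
\int_{S_k}|Tf(x)|(1+|x|^l)\,dx\ls\|f\|_q|\theta_{z,\,t}|^{1-1/q}\,2^{k\gamma(la_4-a_6s)},
\]
and since $l<a_6s/a_4$ the exponent $la_4-a_6s$ is negative, so summing over $k\ge k_0$ yields a convergent geometric series. This proves \eqref{decay2}.

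I expect the Taylor-expansion step to be the delicate point: one must argue that Lemma \ref{l4.11}, which bounds derivatives in the $M_{z,\,m}$-adapted coordinate only at the single point $M^{-1}_{z,\,m}y$, actually controls the Taylor remainder of $y\mapsto K(x,y)$ uniformly over $\theta_{z,\,t}$ (this is where convexity of the ellipsoid and the fact that all points of the relevant segment lie in $\theta_{z,\,t}$ enter), and that the resulting Taylor polynomial genuinely has degree $<s$ in $y$ so that the cancellation hypothesis $\int f(y)y^\alpha\,dy=0$, $|\alpha|<s$, applies. The remaining bookkeeping — reconciling the scale $m=t-k\gamma$, the operator-norm bound $\|M^{-1}_{z,\,m}M_{z,\,t}\|\ls 2^{-a_6k\gamma}$, and the normalization $|\theta_{z,\,t}|\sim2^{-t}$ so as to land precisely on the exponent $1+a_6s$ — is routine.
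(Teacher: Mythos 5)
Your proof is correct and follows essentially the same route as the paper: rescale the kernel by $M_{z,\,t-k\gamma}$, invoke Lemma \ref{l4.11} (with convexity of $\theta_{z,\,t}$ to control the Taylor remainder along the segment), use the vanishing moments to kill the degree $\le s-1$ Taylor polynomial, and combine H\"older with the shape-condition bound $\|M^{-1}_{z,\,t-k\gamma}M_{z,\,t}\|\le a_5 2^{-a_6 k\gamma}$ to get \eqref{decay1}; the integrability \eqref{decay2} is then obtained, as in the paper, by splitting off a fixed ellipsoid (handled via $L^q$-boundedness of $T$) and summing over the annuli using Lemma \ref{l2.6} and $la_4-a_6s<0$. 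No gaps.
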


\begin{proof}
Take any $x\in\theta_{z,\,t-(k+1)\gamma}\setminus\theta_{z,\,t-k\gamma}$, $k\in \nn$, and $y\in\theta_{z,\,t}$.
Define the rescaled kernel $\wz{K}(u,v):=K(u,M_{z,\,t-k\gamma}v)$, $u,v\in\R^n$.
By Lemma \ref{l4.11}, we have, for all $\alpha\in\nn_0^n$ with $|\alpha|\le s$,
\begin{equation} \label{e6.13}
|\partial^\alpha_y[\wz{K}(x,M^{-1}_{z,\,t-k\gamma}y)|\le C2^{t-k\gamma}.
\end{equation}
Since  $\supp f\subset\theta_{z,\,t}$, we can write
\begin{equation}\label{e6.14}
Tf(x)=\int_{\theta_{z,\,t}}K(x,y)f(y)dy=\int_{\theta_{z,\,t}}
\wz{K}(x,M^{-1}_{z,\,t-k\gamma}y)f(y)dy.
\end{equation}
 Now we expand $\wz{K}$ into the Taylor polynomial of degree $s-1$ (only in $y$ variable) at the point $(x,M^{-1}_{z,\,t-k\gamma}z)$, that is
\begin{equation}\label{e6.15}
\wz{K}(x,\wz{y})=\sum_{|\alpha|\le s-1}\frac{\partial^\alpha_y\wz{K}(x,M^{-1}_{z,\,t-k\gamma}z)}{\alpha !}(\wz{y}-M^{-1}_{z,\,t-k\gamma}z)^{\alpha}+R_s(\wz{y}),
\end{equation}
where $\wz{y}:=M^{-1}_{z,\,t-k\gamma}y$ and $y\in\theta_{z,\,t}$. Then, using \eqref{e6.13} and \eqref{e2.2}, we see that the remainder term $R_s$ satisfies
\begin{align}\label{e6.16}
|R_s(\wz{y})|&\le C\sup_{\xi\in \theta_{z,\,t}}
\sup_{|\alpha|=s}|\partial^\alpha_y\wz{K}(x,M^{-1}_{z,\,t-k\gamma}\xi)|
|\wz{y}-M^{-1}_{z,\,t-k\gamma}z|^{s}\\
&\le C 2^{t-k\gamma}\sup_{w\in\mathbb{B}^n}|M^{-1}_{z,\,t-k\gamma}M_{z,\,t}w|^{s}\nonumber\\
&\le C 2^{t-k\gamma(1+a_6s)}.\nonumber
\end{align}
Moreover, by H\"{o}lder's inequality we have
\begin{equation}\label{e6.17}
\int_{\theta_{z,\,t}}|f(y)|dy\le \|f\|_q|\theta_{z,\,t}|^{1/{q'}} \ls 2^{-t/q'} \|f\|_q,
\end{equation}
where $1/q+1/q'=1$. Finally, using \eqref{e6.14}, \eqref{e6.15}, the vanishing moments of $f$ up to order $s-1$, \eqref{e6.16} and \eqref{e6.17}, we obtain that
\begin{align*}
|Tf(x)|&\le \int_{\theta_{z,\,t}}|R_s(M^{-1}_{z,\,t-k\gamma}y)f(y)|dy\le C 2^{-k\gamma(1+a_6s)} 2^{t/q} \|f\|_q,
\end{align*}
which implies \eqref{decay1}.

To show the second part \eqref{decay2} we first choose $k_0\in\nn$ large enough such that for any $x\in(\theta_{z,t-k_0\gamma})^\complement$, we have $\rho_\Theta(x,z)\ge 1$. Then, we split the integral into two parts
 $$\int_\rn|Tf(x)|(1+|x|^l) \,dx= \bigg( \int_{\theta_{z,t-k_0\gamma}}
+\int_{(\theta_{z,t-k_0\gamma})^\complement} \bigg) |Tf(x)|(1+|x|^l) \,dx =:{\rm I}+{\rm II}.$$
The first integral is bounded by H\"older's inequality and the boundedness of $T:L^q \to L^q$,
\begin{align*}
{\rm I}&\ls C \int_{\theta_{z,t-k_0\gamma}}|Tf(x)|\,dx\le C\lf\{\int_{\theta_{z,\,t-k_0\gamma}}|Tf(x)|^q\,dx\r\}^{\frac1q} |\theta_{z,\,t-k_0\gamma}|^{\frac1{q'}}<\fz.
\end{align*}

By Lemma \ref{l2.6} and Proposition \ref{p4.9} we have
\[
|x-z| \ls \rho_\theta(z,x)^{a_4} \sim 2^{(-t+k\gamma)a_4}
\qquad\text{for } x\in\theta_{z,t-(k+1)\gamma}\setminus
\theta_{z,t-k\gamma}, \ k\ge k_0.
\]
Hence,
\[
\int_{\theta_{z,t-(k+1)\gamma}\setminus
\theta_{z,t-k\gamma}}  |x-z|^l\,dx
\ls |\theta_{z, t-(k+1)\gamma}|  2^{(-t+k\gamma)l a_4} \ls 2^{(-t+k\gamma)(l a_4+1)}.
\]
We estimate the second integral using \eqref{decay1},
\begin{align*}
{\rm II}&=\sum_{k=k_0}^{\fz}\int_{\theta_{z,t-(k+1)\gamma}\setminus
\theta_{z,t-k\gamma}}|Tf(x)|(1+|x|^l)\,dx\\
&\ls  ||f||_q |\theta_{z,\,t}|^{-1/{q}}   \sum_{k=k_0}^{\fz} 2^{-k\gamma(1+a_6s)} \int_{\theta_{z,t-(k+1)\gamma}\setminus
\theta_{z,t-k\gamma}}  (1+|z|^l+|x-z|^l)\,dx
\\
&\ls  ||f||_q |\theta_{z,\,t}|^{-1/{q}}   \sum_{k=k_0}^{\fz} 2^{-k\gamma(1+a_6s)}( 2^{-t+k\gamma} + 2^{(-t+k\gamma)(l a_4+1)})
\\
&\ls  ||f||_q |\theta_{z,\,t}|^{-1/{q}}   \sum_{k=k_0}^{\fz} ( 2^{-t-k\gamma a_6s}+ 2^{-t(l a_4+1)+k\gamma(la_4-sa_6)}) <\infty.
\end{align*}
The last series converges since we have assumed that $l a_4 -  a_6s<0$.
\end{proof}

We are now ready to state the main results of the paper, Theorems \ref{t4.17} and \ref{t4.16}. These are generalizations  \cite[Theorems 9.8 and 9.9]{b} to Hardy spaces with pointwise variable anisotropy.

\begin{theorem}\label{t4.17} Let $\Theta$ be an ellipsoid cover with parameters $\mathbf{p}(\Theta)=\{a_1,\ldots,a_6\}$ and $0<p \le 1$. Suppose that $T$ is a {\rm VASIO} of order $s$ such that
\begin{align}\label{e4.23}
s> \frac{a_4 }{a_6} N_p(\Theta)
&\qquad\text{where } N_p(\Theta)=\bigg\lfloor \frac{\max(1,a_4)n+1}{a_6p} \bigg\rfloor+1,
\\
T^*(x^\az)=0 & \qquad\text{for all } \alpha\in\nn_0^n,\ |\az|\le  N_p(\Theta).
\label{e4.24}
\end{align}
Then, $T$ extends to a bounded linear operator from $H^p(\Theta)$ to itself.
\end{theorem}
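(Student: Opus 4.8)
The plan is to prove boundedness of $T$ on $H^p(\Theta)$ by showing that $T$ maps $(p,\infty,l)$-atoms to uniformly controlled molecules and that this control is compatible with the atomic decomposition of Theorem \ref{l3.8}. The subtle point, as flagged in the introduction, is that atom-to-molecule estimates alone do not yield boundedness of $T$; to circumvent this we will invoke Theorem \ref{l5.9} to pass through the dense subspace $H^p(\Theta)\cap L^q$, on which the atomic decomposition converges also in $L^q$. So first I would fix $1<q<\infty$ (with $q\le 2$, so Theorem \ref{t4.2} applies) and take $f\in H^p(\Theta)\cap L^q$. By Theorem \ref{l5.9} we may write $f=\sum_{k,i}\lambda^k_i a^k_i$ with convergence both in $\cs'$ and in $L^q$, the atoms supported on ellipsoids $\theta^k_i:=\theta_{x^k_i,\,t^k_i-J-3\gamma-1}$, and $\sum_{k,i}|\lambda^k_i|^p\ls \|f\|^p_{H^p(\Theta)}$. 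Since $T$ is $L^q$-bounded, $Tf=\sum_{k,i}\lambda^k_i Ta^k_i$ with convergence in $L^q$, hence also in $\cs'$.

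Next I would estimate $M^\circ(Ta^k_i)$, where $a:=a^k_i$ is a $(p,\infty,l)$-atom supported in $\theta_{z,\,t}:=\theta^k_i$ with $\|a\|_\infty\le|\theta_{z,\,t}|^{-1/p}$ and vanishing moments up to order $l\ge N_p(\Theta)$. The key observation is that $Ta$ behaves like a molecule: we split $\rn$ into the local part $\theta_{z,\,t-\gamma}$ (or a fixed dilate) and the annuli $\theta_{z,\,t-(k+1)\gamma}\setminus\theta_{z,\,t-k\gamma}$, $k\in\nn$, via Lemma \ref{l4.8}. On the local part, H\"older's inequality and the $L^q\to L^q$ boundedness of $T$ give $\|\,\textbf{1}_{\theta_{z,\,t-\gamma}}\,M^\circ(Ta)\|_p\ls |\theta_{z,\,t}|^{1/p-1/q}\|Ta\|_q\ls 1$ using the standard fact that for a function supported near a ball one controls the grand maximal function's $L^q$ norm by $\|Ta\|_q$ up to dimensional constants. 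On the far annuli, since the atom has vanishing moments of order $l\ge N_p(\Theta)\ge s'$ for suitable $s'$ — here is where \eqref{e4.23} enters, ensuring $s$ is large enough relative to $N_p$ and $a_4/a_6$ — Lemma \ref{decay} (applied with $\|a\|_q\le|\theta_{z,\,t}|^{1/q-1/p}$) yields the pointwise decay $|Ta(x)|\ls|\theta_{z,\,t}|^{-1/p}2^{-k\gamma(1+a_6s)}$ on the $k$-th annulus, and more generally decay of $M^\circ(Ta)$ with a possibly smaller but still summable exponent. Summing $\sum_{k\in\nn}|\theta_{z,\,t-(k+1)\gamma}|\,(|\theta_{z,\,t}|^{-1/p}2^{-k\gamma\delta})^p$ over the annuli with $\delta>0$ chosen via the regularity $s$ gives a geometric series, so that $\|M^\circ(Ta^k_i)\|_p^p\ls C$ uniformly in $k,i$. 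The vanishing-moment hypothesis \eqref{e4.24} guarantees that $Ta^k_i$ itself has vanishing moments up to order $N_p(\Theta)$, so $Ta^k_i$ is, up to the uniform constant, a bona fide molecule in $H^p(\Theta)$; equivalently, by a Calder\'on–Zygmund/atomic argument (as in \cite{abr} or \cite{dpw}) each $Ta^k_i$ decomposes into $(p,\infty,l)$-atoms with $\ell^p$-summable coefficients bounded by an absolute constant.

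Then I would assemble the pieces: using the $p$-subadditivity of $\|M^\circ(\cdot)\|_p^p$ and the convergence of $Tf=\sum_{k,i}\lambda^k_i Ta^k_i$ in $\cs'$, I get
\[
\|Tf\|^p_{H^p(\Theta)}=\|M^\circ(Tf)\|_p^p\le\sum_{k\in\zz}\sum_{i\in\nn_0}|\lambda^k_i|^p\,\|M^\circ(Ta^k_i)\|_p^p\ls\sum_{k\in\zz}\sum_{i\in\nn_0}|\lambda^k_i|^p\ls\|f\|^p_{H^p(\Theta)}.
\]
This proves the bound on the dense subspace $H^p(\Theta)\cap L^q$. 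Finally, since $H^p(\Theta)\cap L^q$ is dense in $H^p(\Theta)$ (Lemma \ref{l5.8}) and $H^p(\Theta)$ is complete (Lemma \ref{l5.1}), the estimate extends by continuity to a bounded linear operator $T:H^p(\Theta)\to H^p(\Theta)$; one checks that this extension agrees with $T$ on $\cs'$ where both make sense, using that $H^p(\Theta)\hookrightarrow\cs'$ continuously. Without loss of generality we may assume $\Theta$ is pointwise continuous by Theorem \ref{t2.1}, since $H^p(\Theta)$ only depends on the equivalence class of $\Theta$.

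\textbf{Main obstacle.} I expect the main technical difficulty to lie in the molecule estimate on the far annuli — specifically, controlling the \emph{grand maximal function} $M^\circ(Ta)$ (not just $|Ta|$ pointwise) on $\theta_{z,\,t-(k+1)\gamma}\setminus\theta_{z,\,t-k\gamma}$ with a decay rate summable against the annulus volumes in $\ell^p$. This requires a careful bookkeeping of how the anisotropy matrices $M_{z,\,t-k\gamma}$ interact across scales (via the shape condition \eqref{e2.2}, Lemmas \ref{l4.11}, \ref{l4.12}), of the extra room provided by the Taylor expansion against the vanishing moments of the atom, and of the interplay between the regularity order $s$, the number of vanishing moments $N_p(\Theta)$, and the exponents $a_4,a_6,n,p$ in \eqref{e4.23}. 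One must verify that when $\varphi\in\cs_{N_p,\wz N_p}$ is tested against $Ta$ at scales and apertures adapted to a point far from $\theta_{z,\,t}$, the resulting convolution $Ta\ast\varphi_{x,s}$ inherits enough decay; this is where the interplay of the molecular decay of $Ta$ (Lemma \ref{decay}) and the smoothness/moment conditions on $\varphi$ must be balanced, exactly as in the anisotropic model case of \cite[Theorems 9.8, 9.9]{b}, but now with the position-dependent dilations.
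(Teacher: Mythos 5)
Your proposal is correct in outline and its skeleton coincides with the paper's proof: fix $1<q<\infty$, take $f\in H^p(\Theta)\cap L^q$, use Theorem \ref{l5.9} to get an atomic decomposition converging in $L^q$, commute $T$ with the sum using its $L^q$-boundedness (Theorem \ref{t4.2}, Remark \ref{r4.3}), prove a uniform bound $\|Ta^k_i\|_{H^p(\Theta)}\le C$, apply $p$-subadditivity of $\|M^\circ(\cdot)\|_p^p$, and conclude by density (Lemma \ref{l5.8}) and completeness (Lemma \ref{l5.1}). The one place where your route diverges is exactly the step you flag as the main obstacle: the paper never estimates the grand maximal function $M^\circ(Ta)$ pointwise on the far annuli. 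Instead, Lemma \ref{l6.4} verifies that $Ta$, for a $(p,q,s-1)$-atom $a$ supported in $\theta_{z,\,t}$, satisfies the three molecular conditions \eqref{e6.7}--\eqref{e6.9} with $\delta=1+a_6s$ (size from $L^q$-boundedness of $T$, annulus decay from Lemma \ref{decay}, vanishing moments from \eqref{e4.24} together with the integrability \eqref{decay2}, which is where $N_p(\Theta)<a_6 s/a_4$ from \eqref{e4.23} is used), and then Lemma \ref{l6.3} converts the molecule directly into an atomic bound by writing $g=g_0+\sum_j(g_{j+1}-g_j)$ with $g_j={\bf 1}_{\theta_{z,\,t-j\gamma}}\wz{\pi}_{\theta_{z,\,t-j\gamma}}g$, each difference being a $(p,\fz,l)$-atom up to a coefficient $\sim 2^{-j\gamma(\delta-1/p)}$; this sidesteps any control of $M^\circ(Ta)$ and is essentially your fallback of citing \cite{abr} (cf.\ Remark \ref{r6.1}), which is the route I would recommend over your primary one. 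Two quantitative points to tighten if you carry this out: the summability you need is not merely ``$\delta>0$'' but $\delta>a_4l+1$ (this is what controls the polynomial-growth error terms of the projections in the molecular argument, and it is precisely what \eqref{e4.23} supplies via $1+a_6s>1+a_4N_p(\Theta)$); and Lemma \ref{decay} requires vanishing moments of the atom up to order $s-1$, so in Theorem \ref{l5.9} one should take $l=s-1$ (admissible, since \eqref{e4.23} and $a_4\ge a_6$ give $s-1\ge N_p(\Theta)$), as the paper does. Finally, on the local part the correct justification is simply the $L^q$-boundedness of $M^\circ$ for $q>1$ plus H\"older --- $Ta$ is not supported near the ellipsoid, so the ``supported near a ball'' heuristic is not the right reason, though the estimate itself stands.
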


\begin{theorem}\label{t4.16}
Let $\Theta$ be an ellipsoid cover with parameters $\mathbf{p}(\Theta)=\{a_1,\ldots,a_6\}$ and $0<p\le 1$. Suppose $T$ is a {\rm VASIO} of order $s$ with
\begin{equation}\label{e4.22}
s>\frac{1/p-1}{a_6}.
\end{equation}
Then, $T$ extends to a bounded linear operator from $H^p(\Theta)$ to $L^p$.
\end{theorem}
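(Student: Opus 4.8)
The plan is to reduce the statement to a uniform bound on atoms, using the refined atomic decomposition of Theorem~\ref{l5.9} (which converges in $L^q$, not merely in $\cs'$) together with the $L^q$-boundedness of $T$; this is exactly the mechanism that bypasses the well-known gap between ``bounded on atoms'' and ``bounded on $H^p$''. I would fix $q\in(1,2]$, so that $T:L^q\to L^q$ is bounded by Theorem~\ref{t4.2}, and set $l:=\max\{N_p(\Theta),\,s-1\}$. Given $f\in H^p(\Theta)\cap L^q$, Theorem~\ref{l5.9} provides $(p,\fz,l)$-atoms $\{a^k_i\}_{k\in\zz,\,i\in\nn_0}$ and scalars $\{\lambda^k_i\}$ with $\sum_{k,i}|\lambda^k_i|^p\le C\|f\|^p_{H^p(\Theta)}$ and $f=\sum_{k,i}\lambda^k_i a^k_i$ in $L^q$; applying the bounded operator $T$, the series $\sum_{k,i}\lambda^k_i\,Ta^k_i$ converges to $Tf$ in $L^q$. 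Granting for the moment that $\|Ta\|_p\le C$ for every $(p,\fz,l)$-atom $a$, with $C$ independent of $a$, the inequality $(\sum t_j)^p\le\sum t_j^p$ ($0<p\le1$) gives $\sum_{k,i}|\lambda^k_i|^p\|Ta^k_i\|^p_p\le C\sum_{k,i}|\lambda^k_i|^p<\fz$, so $\sum_{k,i}\lambda^k_i\,Ta^k_i$ converges also in the complete quasi-normed space $L^p$; passing to a subsequence of partial sums converging a.e., its $L^p$-limit must be $Tf$, whence $Tf\in L^p$ and $\|Tf\|^p_p\le C\|f\|^p_{H^p(\Theta)}$. Since $H^p(\Theta)\cap L^q$ is dense in $H^p(\Theta)$ by Lemma~\ref{l5.8}, $T$ then extends to a bounded operator $H^p(\Theta)\to L^p$.

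It remains to establish the uniform atom bound. Let $a$ be a $(p,\fz,l)$-atom with $\supp a\subset\theta_{z,\,t}$, $\|a\|_\fz\le|\theta_{z,\,t}|^{-1/p}$ (hence $\|a\|_q\le|\theta_{z,\,t}|^{1/q-1/p}$), and vanishing moments up to order $l\ge s-1$. By Lemma~\ref{l2.2}(ii), $\supp a\subset\theta_{z,\,t}\subset\theta_{z,\,t-\gamma}$, and by Lemma~\ref{l4.8}, $\rn\setminus\theta_{z,\,t-\gamma}=\bigcup_{k\in\nn}(\theta_{z,\,t-(k+1)\gamma}\setminus\theta_{z,\,t-k\gamma})$. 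I would split $\|Ta\|^p_p={\rm I}+{\rm II}$, where ${\rm I}=\int_{\theta_{z,\,t-\gamma}}|Ta|^p\,dx$ and ${\rm II}=\int_{\rn\setminus\theta_{z,\,t-\gamma}}|Ta|^p\,dx$. For the local term, $|\theta_{z,\,t-\gamma}|\sim|\theta_{z,\,t}|$ by \eqref{e2.1} and Lemma~\ref{l2.2}, so H\"older's inequality with exponent $q/p>1$ and the $L^q$-boundedness of $T$ give
\[
{\rm I}\le\|Ta\|^p_q\,|\theta_{z,\,t-\gamma}|^{1-p/q}\ls\|a\|^p_q\,|\theta_{z,\,t}|^{1-p/q}\ls|\theta_{z,\,t}|^{p/q-1}\,|\theta_{z,\,t}|^{1-p/q}=1.
\]

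For the global term ${\rm II}$, the vanishing moments of $a$ up to order $s-1$ let me invoke the decay estimate \eqref{decay1} of Lemma~\ref{decay}, which with $\|a\|_q\le|\theta_{z,\,t}|^{1/q-1/p}$ gives $|Ta(x)|\ls|\theta_{z,\,t}|^{-1/p}\,2^{-k\gamma(1+a_6 s)}$ on $\theta_{z,\,t-(k+1)\gamma}\setminus\theta_{z,\,t-k\gamma}$, $k\in\nn$. Using $|\theta_{z,\,t-(k+1)\gamma}|\ls 2^{k\gamma}|\theta_{z,\,t}|$ (again \eqref{e2.1} and Lemma~\ref{l2.2}) and summing over the annuli,
\[
{\rm II}\ls\sum_{k\in\nn}|\theta_{z,\,t}|^{-1}\,2^{-k\gamma p(1+a_6 s)}\,|\theta_{z,\,t-(k+1)\gamma}|\ls\sum_{k\in\nn}2^{k\gamma(1-p-pa_6 s)},
\]
and this geometric series converges precisely when $1-p-pa_6 s<0$, i.e.\ when $s>(1/p-1)/a_6$ — which is exactly hypothesis \eqref{e4.22}. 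Hence ${\rm II}\ls1$ and the atom bound follows.

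As for difficulty: essentially all the hard analysis is already packaged in Lemma~\ref{decay}, whose proof exploits the $C^s$-regularity of the kernel through a Taylor expansion of order $s-1$ and the shape condition \eqref{e2.2}; what remains for the far part is merely the arithmetic of the geometric series, whose convergence threshold matches \eqref{e4.22}. The genuinely delicate conceptual point — and the reason one must use Theorem~\ref{l5.9} rather than Theorem~\ref{l3.8} — is the legitimacy of applying $T$ term by term to the atomic decomposition and identifying the result with $Tf$; this is precisely where the $L^q$-convergence of the decomposition and the $L^q$-boundedness of $T$ are both needed.
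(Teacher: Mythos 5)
Your proposal is correct and follows essentially the same route as the paper's proof: a uniform bound $\|Ta\|_p\le C$ on atoms obtained from the decay estimate of Lemma \ref{decay} (far part, geometric series converging exactly under \eqref{e4.22}) plus H\"older and $L^q$-boundedness (local part), combined with the $L^q$-convergent atomic decomposition of Theorem \ref{l5.9}, termwise application of $T$, a.e.\ identification of the limit, the $p$-triangle inequality, and density via Lemma \ref{l5.8}. The only cosmetic differences are your restriction to $q\in(1,2]$ (slightly more careful than the paper's appeal to Remark \ref{r4.3}) and your identification of $Tf$ through $L^p$-completeness rather than the paper's pointwise domination by $\sum_{k,i}|\lambda^k_i|\,|Ta^k_i|$, both of which are sound.
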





\section{Proofs of Theorems \ref{t4.17} and \ref{t4.16}}\label{s6}

To prove Theorems \ref{t4.17} and \ref{t4.16}, we need the following definition and some lemmas.

\begin{definition}
For $l\in\nn_0$, let $\cp_l$ denote the {\it{linear space of polynomials of degree  $\le l$}}. For an ellipsoid $\theta\subset \R^n$, let $\pi_{\theta}: L^1(\theta)\to \cp_l$ be the natural projection defined, via the Riesz lemma, for all $f\in L^1(\theta) $ and $ Q\in\cp_l$,
\begin{align}\label{e6.1}
\int_{\theta} \pi_{\theta}f(x)Q(x)\,dx=\int_{\theta} f(x)Q(x)\,dx.
\end{align}
\end{definition}

\begin{lemma}\label{l6.2}
For any $l\in\nn_0$,  there exists a positive
constant $C$ depending only on $l$ such that for any ellipsoid $\theta \subset \R^n$ and $f\in L^1(\theta)$,
\begin{align}\label{e6.2}
\sup_{x\in \theta}|\pi_{\theta} f(x)|\le C \frac1{|\theta|}\int_{\theta} |f(x)|dx.
\end{align}
\end{lemma}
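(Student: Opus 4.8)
The plan is to reduce the estimate to the special case $\theta=\mathbb B^n$ by an affine change of variables, and then to exploit the finite-dimensionality of $\cp_l$. The whole point of the reduction is that $\cp_l$ is invariant under affine maps, and this is what will force the resulting constant to depend only on $l$ (and the fixed dimension $n$), not on the ellipsoid $\theta$.

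First I would write a given ellipsoid as $\theta=M(\mathbb B^n)+c$ and set $\Phi(u):=Mu+c$, an affine bijection of $\mathbb B^n$ onto $\theta$ with constant Jacobian $|\det M|=|\theta|/|\mathbb B^n|$. Since $Q\mapsto Q\circ\Phi$ is a bijection of $\cp_l$ onto itself, a change of variables in the defining relation \eqref{e6.1} shows that the projection intertwines with $\Phi$, i.e. $(\pi_\theta f)\circ\Phi=\pi_{\mathbb B^n}(f\circ\Phi)$. Taking the supremum over $x=\Phi(u)\in\theta$ and applying the same change of variables to the right-hand side of \eqref{e6.2} then reduces the claim to the single inequality
\[
\sup_{u\in\mathbb B^n}|\pi_{\mathbb B^n}g(u)|\le C\,\frac1{|\mathbb B^n|}\int_{\mathbb B^n}|g(u)|\,du,\qquad g\in L^1(\mathbb B^n),
\]
with $C$ allowed to depend on $l$ and $n$ only.

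For this last inequality I would fix a basis $\{P_1,\dots,P_m\}$ of $\cp_l$ that is orthonormal for the inner product $\langle P,Q\rangle:=\int_{\mathbb B^n}P(x)Q(x)\,dx$ (a genuine inner product on $\cp_l$ since $\mathbb B^n$ has positive Lebesgue measure), so that $\pi_{\mathbb B^n}g=\sum_{j=1}^m\bigl(\int_{\mathbb B^n}g\,P_j\bigr)P_j$. Bounding $\bigl|\int_{\mathbb B^n}g\,P_j\bigr|\le\|P_j\|_{L^\infty(\mathbb B^n)}\int_{\mathbb B^n}|g|$ and $|P_j(u)|\le\|P_j\|_{L^\infty(\mathbb B^n)}$ for $u\in\mathbb B^n$ yields the inequality with $C=|\mathbb B^n|\sum_{j=1}^m\|P_j\|_{L^\infty(\mathbb B^n)}^2$, which indeed depends only on $l$ and $n$. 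There is essentially no genuine obstacle here; the only point requiring care is the bookkeeping in the affine reduction, namely verifying $(\pi_\theta f)\circ\Phi=\pi_{\mathbb B^n}(f\circ\Phi)$, since this identity is precisely what makes the constant uniform in $\theta$.
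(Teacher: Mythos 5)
Your proof is correct and follows essentially the same route as the paper: reduce to the unit ball via the intertwining identity $(\pi_\theta f)\circ\Phi=\pi_{\mathbb B^n}(f\circ\Phi)$ (which the paper carries out in two steps, a dilation and a translation, rather than one affine map), and then bound $\pi_{\mathbb B^n}$ using an $L^2(\mathbb B^n)$-orthonormal basis of $\cp_l$. The only difference is cosmetic, so nothing further is needed.
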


\begin{proof}
Let $\theta :=M(\mathbb{B}^n)+z$ for some invertible matrix $M$ and $z\in\rn$. For a fixed $l\in\nn_0$, we choose an orthonormal basis $\{Q_\alpha:\, |\alpha|\le l\}$ of $\cp_l$ with respect to the $L^2(\mathbb{B}^n)$ norm. Since
$$\pi_{\mathbb{B}^n}f=\sum_{|\alpha|\le l}\lf(\int_{\mathbb{B}^n}f(x)\overline{Q_\alpha(x)}dx\r)Q_\alpha  \qquad {\rm for\ any}\ f\in L^1(\mathbb{B}^n),$$
we conclude that that there exists $C>0$ such that
\begin{equation}\label{e6.3}
\lf|\sup_{x\in\mathbb{B}^n}\pi_{\mathbb{B}^n}f(x)\r|\le \sup_{x\in\mathbb{B}^n}\sum_{|\alpha|\le l}\lf(\int_{\mathbb{B}^n}\lf|f(y)\overline{Q_\alpha(y)}\r|dy\r)|Q_\alpha(x)|
\le C
\frac1{|\mathbb{B}^n|}\int_{\mathbb{B}^n} |f(x)|dx.
\end{equation}
Our goal is to show that \eqref{e6.2} holds with the same constant $C$ for the ellipsoid $\theta$.
We claim that
\begin{equation}\label{e6.4}
\pi_{\theta-z} f=(D_{M^{-1}}\circ\pi_{\mathbb{B}^n}\circ D_{M})f,
\qquad
\text{where } D_{M}f(x):=f(Mx), x\in \R^n.
\end{equation}
Indeed, for any $Q\in\cp_l$, by \eqref{e6.1}, we have
\begin{align*}
&\int_{\theta-z}(D_{M^{-1}}\circ\pi_{\mathbb{B}^n}\circ D_{M})f(x)Q(x)dx
= |\det M| \int_{\mathbb{B}^n}\pi_{\mathbb{B}^n}(D_M f)(x)
Q(M x)d x\\
&\hs=|\det M| \int_{\mathbb{B}^n}f(Mx)Q(Mx)dx=\int_{\theta-z}f(x)Q(x)dx
=\int_{\theta-z}\pi_{\theta-z}f(x)Q(x)dx.
\end{align*}
From \eqref{e6.3} and \eqref{e6.4}, it follows that, for any $f\in L^1(\mathbb{B}^n)$,
\begin{align}\label{e6.5}
\sup_{x\in \theta-z}|\pi_{\theta-z} f(x)|&=\sup_{x\in \theta-z}|(D_{M^{-1}}\circ\pi_{\mathbb{B}^n}\circ D_M f(x)|
=\sup_{x\in \mathbb{B}^n}|\pi_{\mathbb{B}^n}(D_M f)(x)| \\
&\le C\frac1{|\mathbb{B}^n|}\int_{\mathbb{B}^n}|f(M x)|dx
= C\frac1{|\theta-z|}\int_{\theta-z}|f(x)|dx.\nonumber
\end{align}
Likewise, we claim that for $z\in\rn$,
\begin{equation}\label{e6.6}
\pi_{\theta}f=(\tau_z\circ\pi_{\theta -z}\circ \tau_{-z})f,
\qquad\text{where }\tau_zf(x):=f(x-z),\ x\in\rn.
\end{equation}
Indeed, for any $Q\in\cp_l$, by \eqref{e6.1}, we have
\begin{align*}
&\int_{\theta }(\tau_z\circ\pi_{\theta -z}\circ\tau_{-z})f(x)Q(x)dx
=\int_{\theta-z}\pi_{\theta-z}(\tau_{-z} f)(x)Q(x+z)dx\\
&\hs=\int_{\theta-z}f(x+z)Q(x+z)dx =\int_{\theta }f(x)Q(x)dx
=\int_{\theta}\pi_{\theta }f(x)Q(x)dx.
\end{align*}
Then, by \eqref{e6.5} and \eqref{e6.6},
\begin{align*}
\sup_{x\in \theta }|\pi_{\theta } f(x)|
=\sup_{x\in\theta -z}|\pi_{\theta-z}(\tau_{-z} f) (x)|
\le C\frac1{|\theta -z|}\int_{\theta -z}|f(x+z)|dx
= C\frac1{|\theta |}\int_{\theta }|f(x)|dx.
\end{align*}
This finishes the proof of Lemma \ref{l6.2}.
\end{proof}

The following lemma is a generalization of \cite[Lemma 9.3]{b}.

\begin{lemma}\label{l6.3}
Let $(p,q,l)$ be admissible as in Definition \ref{d3.6} and $\delta>a_4l+1$. Suppose that $g$ is a measurable function on $\rn$ such that
\begin{align}\label{e6.7}
\lf(\frac1{|\theta_{z,\,t}|}\int_{\theta_{z,\,t}}|g(x)|^qdx\r)^{\frac1q}\le C|\theta_{z,\,t}|^{-\frac1p}
&\qquad {\rm for\ some}\ \theta_{z,\,t}\in\Theta \ {\rm with}\ z\in\rn,\  t\in\mathbb{R},
\\
\label{e6.8}
|g(x)|\le C|\theta_{z,\,t}|^{-\frac1p}2^{-k\gamma\delta} & \qquad {\rm for}\ x\in\theta_{z,\,t-(k+1)\gamma}\setminus\theta_{z,\,t-k\gamma}\ {\rm with}\ k\in\nn_0,
\\
\label{e6.9}
\int_\rn g(x)x^\alpha dx=0 &\qquad {\rm for}\ |\alpha|\le l.
\end{align}
Then, $g\in H^p(\Theta)$ and $\|g\|_{H^p_{q,\,l}}(\Theta)\le C$, where $C>0$ is a constant  independent of $g$.
\end{lemma}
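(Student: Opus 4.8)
The plan is to realize $g$ as an infinite sum of $(p,q,l)$-atoms supported on the dilated ellipsoids $\theta^{j}:=\theta_{z,\,t-j\gamma}$, $j\in\nn_0$, with $p$-summable coefficients, and then invoke Theorem \ref{l3.8}. Set $R_0:=\theta^{0}$ and $R_j:=\theta^{j}\setminus\theta^{j-1}$ for $j\ge1$; by Lemma \ref{l4.8} the family $\{R_j\}_{j\in\nn_0}$ partitions $\rn$ and $\theta^{N}\uparrow\rn$, and $\theta^{j-1}\subset\theta^{j}$. Write $m_j:=g\mathbf 1_{R_j}$ and $S_j:=g\mathbf 1_{\theta^{j}}=\sum_{k=0}^{j}m_k$. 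Using $|\theta^{k}|\sim 2^{-t+k\gamma}$ together with H\"older's inequality and \eqref{e6.7} (for $k=0$) and \eqref{e6.8} (for $k\ge1$), I would first record that, for all $k\in\nn_0$,
\[
\int_{R_k}|g|\ls|\theta^{0}|^{1-1/p}2^{k\gamma(1-\delta)}\qquad\text{and}\qquad \|m_k\|_{q}\ls 2^{k\gamma(1/p-\delta)}|\theta^{k}|^{1/q-1/p}.
\]
Since $l\ge N_p(\Theta)\ge1$, we have $\delta>a_4l+1>1$, so the first of these shows $g\in L^1$ and that $\int_\rn g(x)x^\alpha\,dx$ is absolutely convergent for $|\alpha|\le l$, so \eqref{e6.9} is meaningful and $g\in\cs'$.

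Next, with $\pi_{\theta}$ the projection from the preceding definition, I would set for $j\ge1$
\[
a_j:=m_j-(\pi_{\theta^{j}}S_j)\mathbf 1_{\theta^{j}}+(\pi_{\theta^{j-1}}S_{j-1})\mathbf 1_{\theta^{j-1}},\qquad a_0:=m_0-(\pi_{\theta^{0}}S_0)\mathbf 1_{\theta^{0}}.
\]
Since $\theta^{j-1}\subset\theta^{j}$ one has $\supp a_j\subset\theta^{j}$; since $\int_{\theta}(\pi_{\theta}h)Q=\int_{\theta}hQ$ for every $Q\in\cp_l$ and $m_j=S_j-S_{j-1}$, the moments telescope and $\int_\rn a_j(x)x^\alpha\,dx=0$ for $|\alpha|\le l$; and the partial sums telescope, $\sum_{j=0}^{N}a_j=S_N-(\pi_{\theta^{N}}S_N)\mathbf 1_{\theta^{N}}$.

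The crux is the size of the polynomial corrections $(\pi_{\theta^{j}}S_j)\mathbf 1_{\theta^{j}}$; applying Lemma \ref{l6.2} directly to $S_j$ is far too lossy. Instead I would exploit the cancellation \eqref{e6.9}: since $\int_\rn gQ=0$ for $Q\in\cp_l$,
\[
\int_{\theta^{j}}(\pi_{\theta^{j}}S_j)Q=\int_{\theta^{j}}gQ=-\int_{(\theta^{j})^{\complement}}gQ,
\]
so $\pi_{\theta^{j}}S_j$ only feels the rapidly decaying tail of $g$ outside $\theta^{j}$. Passing to normalized coordinates $\theta^{j}=M_{z,t-j\gamma}(\mathbb B^n)+z$, using that a polynomial in $\cp_l$ of unit $L^2(\mathbb B^n)$-norm is $\ls(1+|u|)^l$ pointwise, and using \eqref{e2.2} in the form $\|M_{z,t-j\gamma}^{-1}M_{z,t-k\gamma}\|\ls 2^{a_4(k-j)\gamma}$ for $k>j$ (valid because $\theta^{j}\subset\theta^{k}$), I would estimate
\[
\sup_{\theta^{j}}|\pi_{\theta^{j}}S_j|\ls\frac1{|\theta^{j}|}\sum_{k>j}2^{a_4l(k-j)\gamma}\int_{R_k}|g|\ls\frac{|\theta^{0}|^{1-1/p}}{|\theta^{j}|}2^{j\gamma(1-\delta)},
\]
the series converging precisely because $\delta>a_4l+1$. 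Hence $\|(\pi_{\theta^{j}}S_j)\mathbf 1_{\theta^{j}}\|_q\ls 2^{j\gamma(1/p-\delta)}|\theta^{j}|^{1/q-1/p}$, and the same bound holds for the $(j-1)$-term (using $\theta^{j-1}\subset\theta^{j}$). Combining with the estimate on $m_j$ yields $\|a_j\|_q\le\lambda_j|\theta^{j}|^{1/q-1/p}$ with $\lambda_j\ls 2^{j\gamma(1/p-\delta)}$ for $j\ge1$ and $\lambda_0\ls1$.

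Finally I would sum up. Since $(p,q,l)$ is admissible, $l\ge N_p(\Theta)$, and since \eqref{e2.2} forces $a_6\le a_4$ (take $y=x$ and let $s\to\infty$), the definition of $N_p(\Theta)$ together with $n\ge1$ gives $\delta>a_4l+1\ge a_4N_p(\Theta)+1>1/p$; hence $\sum_{j}\lambda_j^p\ls\sum_{j}2^{j\gamma(1-p\delta)}\ls1$. Moreover $S_N\to g$ in $L^1$ by dominated convergence, and $\|(\pi_{\theta^{N}}S_N)\mathbf 1_{\theta^{N}}\|_1\le|\theta^{N}|\sup_{\theta^{N}}|\pi_{\theta^{N}}S_N|\ls|\theta^{0}|^{1-1/p}2^{N\gamma(1-\delta)}\to0$, so $g=\sum_{j\ge0}a_j$ converges in $\cs'$. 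Writing $a_j=\lambda_j\widetilde a_j$ with $\widetilde a_j$ a $(p,q,l)$-atom (and $\lambda_j:=0$ when $a_j\equiv0$) exhibits an admissible atomic decomposition of $g$, so $g\in H^p_{q,\,l}(\Theta)$ with $\|g\|_{H^p_{q,\,l}(\Theta)}\ls1$; by Theorem \ref{l3.8} this is $g\in H^p(\Theta)$ with the same bound. I expect the third step to be the main obstacle: extracting the sharp exponent from $\pi_{\theta^{j}}S_j$ requires combining the cancellation \eqref{e6.9} with the precise hypothesis $\delta>a_4l+1$, balanced against the anisotropic blow-up $2^{a_4l(k-j)\gamma}$ of degree-$l$ polynomials across scales.
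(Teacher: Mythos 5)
Your proposal is correct and follows essentially the same route as the paper's proof: the telescoping decomposition $g=\sum_j (g_{j+1}-g_j)$ with the complementary projections $\wz\pi_{\theta_{z,t-j\gamma}}$ on nested ellipsoids, the use of the vanishing moments \eqref{e6.9} to trade $\pi_{\theta^j}$ for the tail integral over $(\theta^j)^\complement$, the shape-condition bound $\|M_{z,t-j\gamma}^{-1}M_{z,t-k\gamma}\|\ls 2^{a_4(k-j)\gamma}$ giving the exponent $a_4l$ and the condition $\delta>a_4l+1$, and the summability of $\lambda_j\sim 2^{-j\gamma(\delta-1/p)}$ via $a_4\ge a_6$ and $l\ge N_p(\Theta)$. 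The only (immaterial) difference is that you bound the difference terms in $L^q$ to get $(p,q,l)$-atoms, whereas the paper bounds them in $L^\infty$ to get $(p,\infty,l)$-atoms.
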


\begin{proof}
Given an ellipsoid $\theta\in\Theta$ consider the natural projection $\pi_\theta: L^1(\theta)\rightarrow\cp_l$ given by \eqref{e6.1}. Define the complementary projection $\wz{\pi}_\theta={\rm I}-\pi_\theta$, i.e., $\wz{\pi}_\theta f=f-\pi_\theta f$. By \eqref{e6.2}, we know that $\wz{\pi}_\theta$ is bounded on $L^q(\theta)$, i.e.,
\begin{equation*}
\|\wz{\pi}_\theta f\|_{L^q(\theta)}\le C_0\|f\|_{L^q(\theta)},
\end{equation*}
with the constant $C_0$ independent of $\theta\in\Theta$. Moreover,
$$\int_\theta\wz{\pi}_{\theta}f(x)x^\alpha dx=0\ \ \ {\rm for\ all}\ |\alpha|\le l.$$

We want to represent $g$ as a combination of atoms supported on $\theta_{z,\,t-j\gamma}$,  $j\in\nn_0$, where $\gamma$ is as in Lemma \ref{l2.2}(ii). Define the sequence of function $\{g_j\}_{j=0}^\fz$ by
\[
g_j={\bf 1}_{\theta_{z,\,t-j\gamma}}\wz{\pi}_{\theta_{z,\,t-j\gamma}}g.
\]
Clearly, $\supp g_j\subset\theta_{z,\,t-j\gamma}$. Since
$$\|g_0\|_q\le C_0\|g{\bf 1}_{\theta_{z,\,t}}\|_q\le C_0|\theta_{z,\,t}|^{\frac1q-\frac1p}$$
and $g_0$ has vanishing moments up to order $l$, we deduce that $g_0$ is a $C_0$ multiple of some $(p,q,l)$-atom (namely $(C_0)^{-1}g_0$).

We claim that $g_j\rightarrow g$ in $L^1$ (and hence in $\cs'$) as $j\rightarrow\fz$. It suffices to show that $\|\pi_{\theta_{z,\,t-j\gamma}}g\|_{L^1(\theta_{z,\,t-j\gamma})}\rightarrow 0$ as $j\rightarrow \fz$. Indeed, let $\{Q_\alpha:\, |\alpha|\le l\}$ be an orthonormal basis of $\cp_l$ with respect to the $L^2(\mathbb{B}^n)$ norm. By the argument used to show \eqref{e6.2} we have
\begin{align}\label{e6.10}
\pi_{\theta_{z,\,t-j\gamma}}g&=\lf(\tau_z\circ D_{M_{z,\,t-j\gamma}^{-1}}\circ\pi_{\mathbb{B}^n}\circ D_{M_{z,\,t-j\gamma}}\circ\tau_{-z}\r)g\\
&=\sum_{|\alpha|\le l}\lf(\int_{\mathbb{B}^n}D_{M_{z,\,t-j\gamma}}\circ\tau_{-z}g(x)
\overline{Q_\alpha(x)}dx\r)
\tau_z\circ D_{M_{z,\,t-j\gamma}^{-1}}Q_\alpha\nonumber\\
&=\sum_{|\alpha|\le l}\lf(\int_{\mathbb{B}^n}g(M_{z,\,t-j\gamma}x+z)\overline{Q_\alpha(x)}dx\r)
\tau_z\circ D_{M_{z,\,t-j\gamma}^{-1}}Q_\alpha\nonumber\\
&=\sum_{|\alpha|\le l}\lf(\int_{\theta_{z,\,t-j\gamma}}g(x)\overline{Q_\alpha(
M_{z,\,t-j\gamma}^{-1}(x-z))}dx\r)|\det (M_{z,\,t-j\gamma}^{-1})|
\tau_z\circ D_{M_{z,\,t-j\gamma}^{-1}}Q_\alpha.\nonumber
\end{align}
By \eqref{e6.9} and the uniform boundedness of coefficients of the polynomials $Q_\alpha(M_{z,\,t-j\gamma}^{-1}x)$ for $j\ge 0$, we also have
\begin{align*}
\lf\|\tau_z\circ D_{M_{z,\,t-j\gamma}^{-1}}Q_\alpha\r\|_{L^1(\theta_{z,\,t-j\gamma})}
&=\int_{\theta_{z,\,t-j\gamma}}\lf|\tau_z\circ D_{M_{z,\,t-j\gamma}^{-1}}Q_\alpha(x)\r|dx\\
&=\int_{\theta_{z,\,t-j\gamma}}|Q_\alpha(M_{z,\,t-j\gamma}^{-1}(x-z))|dx\\
&=|\det (M_{z,\,t-j\gamma})|\int_{\mathbb{B}^n}|Q_\alpha(x)|dx\\
&\le C|\det (M_{z,\,t-j\gamma})|
\end{align*}
and
\begin{equation*}
\int_{\theta_{z,\,t-j\gamma}}g(x)\overline{Q_\alpha(M_{z,\,t-j\gamma}^{-1}x)}dx
=-\int_{\theta_{{z,\,t-j\gamma}}^\complement} g(x)\overline{Q_\alpha(M_{z,\,t-j\gamma}^{-1}x)}dx\rightarrow 0\ \ {\rm as}\ \ j\rightarrow \fz.
\end{equation*}
From this, we conclude that
\begin{align*}
\|\pi_{\theta_{z,\,t-j\gamma}}g\|_{L^1(\theta_{z,\,t-j\gamma})}\rightarrow 0\ \ {\rm as}\ \ j\rightarrow \fz,
\end{align*}
which shows
\begin{equation}\label{e6.11}
g=g_0+\sum_{j=0}^\fz(g_{j+1}-g_j)\ \ \ {\rm in}\ L^1.
\end{equation}
In fact, we will prove that we also have convergence in $H^p(\Theta)$ by showing that $g_{j+1}-g_j$ are appropriate multiples of $(p,\fz,l)$-atoms supported on $\theta_{z,\,t-(j+1)\gamma}$. Indeed,
\begin{align}\label{e6.12}
\|g_{j+1}-g_j\|_\fz&=\|{\bf 1}_{\theta_{z,\,t-(j+1)\gamma}}\wz{\pi}_{\theta_{z,\,t-(j+1)\gamma}}g
-{\bf 1}_{\theta_{z,\,t-j\gamma}}\wz{\pi}_{\theta_{z,\,t-j\gamma}}g\|_\fz\\
&=\|{\bf 1}_{\theta_{z,\,t-(j+1)\gamma}\setminus\theta_{z,\,t-j\gamma}}g-{\bf 1}_{\theta_{z,\,t-(j+1)\gamma}}\pi_{\theta_{z,\,t-(j+1)\gamma}}
g+{\bf 1}_{\theta_{z,\,t-j\gamma}}\pi_{\theta_{z,\,t-j\gamma}}g\|_\fz\nonumber\\
&\le\|{\bf 1}_{\theta_{z,\,t-(j+1)\gamma}\setminus\theta_{z,\,t-j\gamma}}g\|_\fz+
\|{\bf 1}_{\theta_{z,\,t-(j+1)\gamma}}\pi_{\theta_{z,\,t-(j+1)\gamma}}g\|_\fz\nonumber\\
&\hs\hs+\|{\bf 1}_{\theta_{z,\,t-j\gamma}}\pi_{\theta_{z,\,t-j\gamma}}g\|_\fz\nonumber\\
&=:{\rm I}+{\rm II}+{\rm III}.\nonumber
\end{align}
For I, by \eqref{e6.8}, we have
\begin{align*}
{\rm I}&=\|{\bf 1}_{\theta_{z,\,t-(j+1)\gamma}\setminus\theta_{z,\,t-j\gamma}}g\|_\fz
\le |\theta_{z,\ t}|^{-1/p} 2^{-j\gamma\delta} \ls |\theta_{z,\,t-j\gamma}|^{-\frac1p}2^{-j\gamma(\delta-1/p)}.
\end{align*}
Since
\begin{align*}
\lf\|{\bf 1}_{\theta_{z,\,t-j\gamma}}\tau_z\circ D_{M_{z,\,t-j\gamma}^{-1}}Q_\alpha\r\|_\fz
&=\sup_{x\in\theta_{z,\,t-j\gamma}}\lf|\tau_z\circ D_{M_{z,\,t-j\gamma}^{-1}}Q_\alpha(x)\r|\\
&=\sup_{x\in\theta_{z,\,t-j\gamma}}|Q_\alpha(M_{z,\,t-j\gamma}^{-1}(x-z))|\\
&= \sup_{x\in\mathbb{B}^n}|Q_\alpha(x)|\le C_1
\end{align*}
for all $|\alpha|\le l$, then by \eqref{e6.9} and \eqref{e6.10}, we have
$$\|{\bf 1}_{\theta_{z,\,t-j\gamma}}\pi_{\theta_{z,\,t-j\gamma}}g\|_\fz\le C_1\sum_{|\alpha|\le l}\lf|\int_{\theta_{{z,\,t-j\gamma}}^\complement}g(x)\overline{Q_\alpha(
M_{z,\,t-j\gamma}^{-1}(x-z))}dx\r||\det(M_{z,\,t-j\gamma}^{-1})|.$$
Notice that $|Q_\alpha(x)|\le C_2|x|^l$ for any $x\in(\mathbb{B}^n)^\complement$ and some constant $C_2>0$. By \eqref{e2.2} and \eqref{e6.8}, we have
\begin{align*}
&\lf|\int_{\theta_{z,\,t-j\gamma}^\complement}g(x)\overline{Q_\alpha(
M_{z,\,t-j\gamma}^{-1}(x-z))}dx\r| \le C_2\int_{\theta_{z,\,t-j\gamma}^\complement}|g(x)|\lf|M_{z,\,t-j\gamma}^{-1}(x-z)\r|^ldx\\
&\hs \le C_2 |\theta_{z,\,t}|^{-\frac{1}{p}}\sum_{i=j}^\fz\int_{\theta_{z,\,t-(i+1)\gamma}
\setminus\theta_{z,\,t-i\gamma}}
2^{-i\gamma\delta}|M_{z,\,t-j\gamma}^{-1}(x-z)|^ldx\\
&\hs\le C_2 |\theta_{z,\,t}|^{-\frac{1}{p}}|\det(M_{z,\,t-j\gamma})|\sum_{i=j}^\fz 2^{-i\gamma\delta}
\int_{M_{z,\,t-j\gamma}^{-1}M_{z,\,t-(i+1)\gamma}(\mathbb{B}^n)}
|x|^{l}dx\\
&\hs\ls  |\theta_{z,\,t}|^{-\frac1p}
2^{-t+j\gamma} \sum_{i=j}^\fz 2^{-i\gamma\delta}
\|M_{z,\,t-j\gamma}^{-1}M_{z,\,t-(i+1)\gamma}\|^{l}
|\det(M_{z,\,t-j\gamma}^{-1}M_{z,\,t-(i+1)\gamma})|\\
&\hs\ls  |\theta_{z,\,t}|^{-\frac1p}
2^{-t-j\gamma(\delta-1)}
\sum_{i=j}^\fz 2^{-\gamma\delta(i-j)}
2^{a_4l \gamma (i-j)}2^{\gamma(i-j)}
\\
&\hs\ls |\theta_{z,\,t-j\gamma}|^{-\frac1p}
2^{-t-j\gamma(\delta-1-1/p)}
\end{align*}
The last series converges since $\delta>a_4l+1$. Therefore, ${\rm III}\ls |\theta_{z,\,t-j\gamma}|^{-\frac1p}2^{-j\gamma(\delta-1/p)}$.

Similarly, we also have ${\rm II}\ls |\theta_{z,\,t-(j+1)\gamma}|^{-\frac1p}
2^{-j\gamma(\delta-1/p)} $. Inserting the estimates of I, II and III into \eqref{e6.12} we conclude that for some constant $C_4>0$ we have
$$\|g_{j+1}-g_j\|_\fz\le C_4|\theta_{z,\,t-(j+1)\gamma}|^{-\frac1p}2^{-j\gamma (\delta-1/p)}.$$
Since functions $g_j$'s have vanishing moments up to order $l$, $g_{j+1}-g_j$ is a $\lambda_j$ multiple of a $(p,\fz,l)$-atom $a_j$ supported on $\theta_{z,\,t-(j+1)\gamma}$. That is, $g_{j+1}-g_j=\lambda_ja_j$ and $\lambda_j=C_42^{-j\gamma(\delta-1/p)}$. By \eqref{e6.11}, we have
$$\| g\|_{H^p_{q,l}(\Theta)}\le\lf((C_0)^p+\sum_{j=0}^\fz|\lambda_j|^p\r)^{\frac1p}
=\lf((C_0)^p+(C_4)^p\sum_{j=0}^\fz 2^{- jp\gamma(\delta-1/p)}\r)^{\frac1p} =:C<\infty.$$
The last series converges since $l\ge N_p(\Theta)$, $a_4 \ge a_6$, and hence,
\[
\delta-1/p>a_4 N_p(\Theta)-1/p > a_4 \frac{\max(1,a_4)n+1}{a_6p} -1/p >0.
\]
This finishes the proof of Lemma \ref{l6.3}.
\end{proof}

\begin{remark}\label{r6.1}
A function $g$ satisfying \eqref{e6.7}, \eqref{e6.8} and \eqref{e6.9} is referred to as a {\it molecule} localized around the ellipsoid $\theta_{z,\,t}$. Lemma \ref{l6.3} shows that a molecule $g$ belongs to $H^p(\Theta)$ with $H^p(\Theta)$ norm bounded by some constant depending only on $(p,\,q,\,l)$ and $\delta$. We also remark that our definition of molecule is more restrictive than what normally is understood as a molecule. For more properties of molecules we refer the interested readers to \cite{tw} in isotropic setting, \cite{zl} in weighted anisotropic setting, and \cite{abr} in variable anisotropic setting.

Lemma \ref{l6.3} can be deduced from \cite[Theorem 1.2]{abr}, but the verification would not be very enlightening and we opted for a direct proof. Such an argument would rely on two observations. First,  we observe that \cite[Theorem 1.2]{abr} holds under the assumption that $(p,q,m)$ is admissible and $d>a_4m+1-1/q$, since we automatically have
\[
a_4m+1-1/q > \max(1/p-1/q, a_4 n(1-1/q)).
\]
Second, a calculation shows that any function $g$ satisfying \eqref{e6.7}, \eqref{e6.8}, and \eqref{e6.9} is a $(p,q,l,d)$-molecule, as defined in \cite{abr}, for any $d$ satisfying
\[
\delta>d+1/q> a_4l +1.
\]
\end{remark}

The following lemma, which is a generalization of \cite[Lemma 9.5]{b}, shows that a VASIO preserving vanishing moments maps atoms into molecules.

\begin{lemma}\label{l6.4}
Let $s\in\nn$, $0<p\le1$, $1<q<\infty$, $T$ be a {\rm VASIO} of order $s$  satisfying $\eqref{e4.23}$ and \eqref{e4.24}. Then there exists a constant $C>0$, depending only on the Calder\'{o}n-Zygmund norm $\|T\|_{(s)}$ of $T$, such that $\|Ta\|_{H^p(\Theta)}\le C$ for every $(p, q, s-1)$-atom $a$.
\end{lemma}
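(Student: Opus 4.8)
The plan is to verify that $g := Ta$ satisfies the hypotheses \eqref{e6.7}, \eqref{e6.8}, \eqref{e6.9} of Lemma \ref{l6.3} with $l = N_p(\Theta)$ and a suitable exponent $\delta$, so that the molecule estimate gives $\|Ta\|_{H^p(\Theta)} = \|Ta\|_{H^p_{q,\,l}(\Theta)} \le C$ uniformly over atoms. Fix a $(p,q,s-1)$-atom $a$ with $\supp a \subset \theta_{z,\,t}$, $\|a\|_q \le |\theta_{z,\,t}|^{1/q-1/p}$, and $\int a(x)x^\alpha\,dx = 0$ for $|\alpha| \le s-1$. Since $s \ge 1$ we at least have one cancellation condition, so Lemma \ref{decay} applies to $f = a$ with this $t$ and $z$.

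First I would establish the local $L^q$ bound \eqref{e6.7}. Here I split $Ta = T(a\mathbf 1_{\theta_{z,\,t}})$ and use the $L^q \to L^q$ boundedness of $T$ (Theorem \ref{t4.2} together with Remark \ref{r4.3}, valid for the chosen $1<q<\infty$), giving $\|Ta\|_q \le C\|a\|_q \le C|\theta_{z,\,t}|^{1/q-1/p}$; restricting to $\theta_{z,\,t}$ and dividing by $|\theta_{z,\,t}|^{1/q}$ yields \eqref{e6.7}. Next, the pointwise decay \eqref{e6.8} away from the dilates of $\theta_{z,\,t}$ is exactly the content of \eqref{decay1} in Lemma \ref{decay}: for $x \in \theta_{z,\,t-(k+1)\gamma}\setminus\theta_{z,\,t-k\gamma}$, $k\in\nn$,
\[
|Ta(x)| \le C\|a\|_q |\theta_{z,\,t}|^{-1/q} 2^{-k\gamma(1+a_6s)} \le C |\theta_{z,\,t}|^{-1/p} 2^{-k\gamma(1+a_6s)},
\]
so \eqref{e6.8} holds with $\delta := 1 + a_6 s$ (the case $k=0$ being absorbed into \eqref{e6.7} after enlarging the ellipsoid by a bounded number of scales $\gamma$ and using Lemma \ref{l2.2}). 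Finally, the vanishing moments \eqref{e6.9} up to order $l = N_p(\Theta)$ are precisely the hypothesis \eqref{e4.24}, $T^*(x^\alpha) = 0$ for $|\alpha| \le N_p(\Theta)$, applied to the compactly supported $L^q$ function $a$ which by definition has vanishing moments up to order $s-1$; the integrals $\int Ta(x)x^\alpha\,dx$ are well defined by \eqref{decay2}, which requires $l < a_6 s/a_4$, i.e. precisely the regularity assumption \eqref{e4.23}, $s > (a_4/a_6)N_p(\Theta)$.

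It remains to check that the quantitative hypothesis of Lemma \ref{l6.3}, namely $\delta > a_4 l + 1$, is met with $\delta = 1 + a_6 s$ and $l = N_p(\Theta)$; this is equivalent to $a_6 s > a_4 N_p(\Theta)$, which is exactly \eqref{e4.23}. Then Lemma \ref{l6.3} yields $\|Ta\|_{H^p_{q,\,l}(\Theta)} \le C$ with $C$ depending only on $p$, $q$, $\mathbf p(\Theta)$, and — through Lemma \ref{decay} and the $L^q$ bound — on $\|T\|_{(s)}$, as claimed. The main technical point to get right is the bookkeeping between the scale normalization in \eqref{e6.8} (where the prefactor is $|\theta_{z,\,t}|^{-1/p}2^{-k\gamma\delta}$ with $k$ counted from $\theta_{z,\,t}$) and the scale normalization in \eqref{decay1}: one must confirm that $|\theta_{z,\,t}|^{-1/q} 2^{-k\gamma(1+a_6 s)} \le C |\theta_{z,\,t}|^{-1/p}2^{-k\gamma\delta}$ using $|\theta_{z,\,t}| \sim 2^{-t}$ and $1/q \le 1/p$, and to handle the finitely many intermediate annuli $k = 0, 1, \dots$ below the first one where \eqref{decay1} kicks in by absorbing them into the $L^q$ estimate \eqref{e6.7} after replacing $\theta_{z,\,t}$ by $\theta_{z,\,t-c\gamma}$ for a fixed bounded $c$ — this only changes $|\theta|$ by a constant factor. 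No genuinely hard new estimate is needed beyond Lemmas \ref{decay}, \ref{l6.3}, and Theorem \ref{t4.2}; the obstacle is purely careful alignment of the normalizations.
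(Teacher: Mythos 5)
Your proposal is correct and follows essentially the same route as the paper's proof: use the $L^q$-boundedness of $T$ for the local estimate \eqref{e6.7}, Lemma \ref{decay} for the decay \eqref{e6.8} with $\delta=1+a_6s$, the hypothesis \eqref{e4.24} (with integrability via \eqref{decay2} and \eqref{e4.23}) for the vanishing moments \eqref{e6.9}, and then invoke the molecular Lemma \ref{l6.3}, noting $\delta>a_4l+1$ is exactly \eqref{e4.23}. The only cosmetic difference is that the paper centers the molecule on the enlarged ellipsoid $\theta_{z,\,t-\gamma}$ from the outset, which is the same re-indexing device you describe for handling the $k=0$ annulus.
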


\begin{proof}
Let $a$ be a $(p,q,s-1)$-atom and $\supp a\subset \theta_{z,\,t}$ with $z\in\rn$ and $t\in\mathbb R$. Since $T:L^q \to L^q$ is bounded, see Theorem \ref{t4.2} and Remark \ref{r4.3}, we have
\begin{equation*}
\lf(\int_{\theta_{z,\,t-\gamma}}|Ta(x)|^q\,dx\r)^{\frac1q} \ls ||a||_q \le |\theta_{z,\,t}|^{\frac1q-\frac1p} \ls |\theta_{z,\,t-\gamma}|^{\frac1q-\frac1p}.
\end{equation*}
By Lemma \ref{decay} for $x\in \theta_{z,\,t-(k+1)\gamma}\setminus\theta_{z,\,t-k\gamma}$, $k\in \nn$, we have
\begin{equation}\label{at2}
|Ta(x)|\ls ||a||_q |\theta_{z,\,t}|^{-1/{q}}2^{-k\gamma(1+a_6s)}
\ls |\theta_{z,\,t-\gamma}|^{-\frac1p} 2^{-k\gamma(1+a_6s)} .
\end{equation}
Hence, $Ta$ satisfies \eqref{e6.7} and \eqref{e6.8} with respect to $\theta_{z,\,t-\gamma}$ and $\delta=1+a_6s$. Furthermore, $Ta$ satisfies \eqref{e6.9} because $T^*(x^\alpha)=0$ for all $|\alpha|\le l=N_p(\Theta)$. By \eqref{e4.23} we have
$\delta>1+a_4l$.  Therefore, by Lemma \ref{l6.3}, there exists a constant $C>0$ independent of $a$ such that $\|Ta\|_{H^p_{q,\,l}(\Theta)}\le C$. By Theorem \ref{l3.8}, it follows that $\|Ta\|_{H^p(\Theta)}\ls C$.
\end{proof}

\begin{proof}[Proof of Theorem \ref{t4.17}]
Let $f\in H^p(\Theta)\cap L^q$.  By Theorem \ref{l5.9}, there exists an atomic decomposition
 \begin{equation}\label{at3}
 f=\sum_{k\in\zz}\sum_{i\in\nn_0}\lambda^k_ia^k_i \ \mathrm{ converges\ in \ } L^q
 \end{equation}
 such that $a^k_i$'s are $(p,\fz,s-1)$-atoms, and hence also $(p,q,s-1)$-atoms, and
\begin{equation}\label{e6.21}
\sum_{k\in\mathbb{Z}}\sum_{i\in\nn_0}|\lambda^k_i|^p\le C\|f\|^p_{H^p(\Theta)}.
\end{equation}
Since $T$ is bounded on $L^q$ (see Theorem \ref{t4.2} and Remark \ref{r4.3}), it follows that $Tf=\sum_{k\in\zz}\sum_{i\in\nn_0}\lambda^k_iTa^k_i$ in $L^q$ and hence
\begin{equation}\label{e6.22}
Tf=\sum_{k\in\zz}\sum_{i\in\nn_0}\lambda^k_iTa^k_i\ \ {\rm in}\ \ \cs'.
\end{equation}
Since $T$ is a {\rm VASIO} of order $s$ and $T^*(x^\alpha)=0$ for all $|\alpha|\le N_p(\Theta)$, by Lemma \ref{l6.4}, we obtain $\|Ta^k_i\|_{H^p(\Theta)}\leq C'$.
Recall that $\ell^p$ norm dominates $\ell^1$ norm for $0<p<1$. Thus, by \eqref{e6.21} and \eqref{e6.22}, we have
\begin{align*}
\|Tf\|_{H^p(\Theta)}^p&=\|M^\circ (Tf)\|_{p}^p\le\lf\|\sum_{k\in\zz}\sum_{i\in\nn_0} |\lambda^k_i| M^\circ(Ta^k_i)\r\|_{p}^p\le \sum_{k\in\zz}\sum_{i\in\nn_0}|\lambda^k_i|^p\|M^\circ(Ta^k_i)\|_p^p\\
&=\sum_{k\in\zz}\sum_{i\in\nn_0}|\lambda^k_i|^p\|Ta^k_i\|^p_{H^p(\Theta)}\le C'\sum_{k\in\zz}\sum_{i\in\nn_0}|\lambda^k_i|^p\le C' C\|f\|_{H^p(\Theta)}^p.
\end{align*}
By the density of $L^q\cap H^p(\Theta)$ in $H^p(\Theta)$, see Lemma \ref{l5.8}, and the completeness of $H^p(\Theta)$, see Lemma \ref{l5.1}(ii), we deduce that $T$ extends to a bounded linear operator from $H^p(\Theta)$ to $H^p(\Theta)$. \end{proof}

\begin{proof}[Proof of Theorem \ref{t4.16}]
Let $l :=\max(N_p(\Theta),s-1)$.
Let $a$ be a $(p,q, l)$-atom with $\supp a\subset\theta_{z,\,t}$, where $z\in\rn$ and $t\in\mathbb R$. We first show
\begin{equation}\label{e6.19}
\|Ta\|_p\leq C'.
\end{equation}
By Lemma \ref{decay} we deduce that \eqref{at2} holds for $x\in \theta_{z,\,t-(k+1)\gamma}\setminus\theta_{z,\,t-k\gamma}$, $k\in\nn$. Hence,
\begin{align}\label{e6.20}
\int_{\theta^\complement_{z,\, t- \gamma}} |Ta(x)|^pdx & =\sum_{k=1}^\fz
 \int_{\theta_{z,\,t-(k+1)\gamma} \setminus \theta_{z,\,t-k\gamma} }|Ta(x)|^pdx \\
 \nonumber
&\ls |\theta_{z,\,t-\gamma}|^{-1} \sum_{k=1}^\fz 2^{-pk\gamma(1+a_6s)} |\theta_{z,\,t-(k+1)
\gamma}|
\ls \sum_{k=1}^\fz 2^{-pk\gamma (1+a_6s- 1/p)}\ls 1.\nonumber
\end{align}
The last series converges by the assumption \eqref{e4.22}.
By the boundedness of $T$ on $L^q$, $1<q<\infty$, and H\"older's inequality
 $$
 \int_{\theta_{z,\,t-\gamma}}|Ta(x)|^pdx
 \leq
\bigg( \int_{\theta_{z,\,t-\gamma}}|Ta(x)|^qdx \bigg)^{p/q} |\theta_{z,\, t-\gamma}|^{1-p/q} \ls ||a||^p _q |\theta_{z,\, t-\gamma}|^{1-p/q} \ls 1.
 $$
This together with \eqref{e6.20} implies that \eqref{e6.19} holds true.

Next we proceed exactly as in the proof of Theorem \ref{t4.17}. By Lemma \ref{e5.9},
any $f\in L^q\cap H^p(\Theta)$ admits an atomic decomposition \eqref{at3} into $(p,q,l)$-atoms $a^k_i$'s such that \eqref{e6.21} holds and $Tf=\sum_{k\in\zz}\sum_{i\in\nn_0}\lambda^k_iTa^k_i$ in $L^q$.
Hence there exists a subsequence of the partial sum sequence $\{\sum_{k=-K}^K\sum_{i=1}^K\lambda^k_iTa^k_i\}_{K\in\nn_0}$ which converges almost everywhere to $Tf$. In that sense, we have
\begin{equation*}
Tf=\sum_{k\in\zz}\sum_{i\in\nn_0}\lambda^k_iTa^k_i \qquad {\rm almost\ everywhere}.
\end{equation*}
By the monotonicity of the $\ell^p$-norm with $0<p\le1$, \eqref{e6.21}, and \eqref{e6.19}, we deduce that for $f\in L^q\cap H^p(\Theta)$,
\begin{align*}
\|Tf\|_p^p=\lf\|\sum_{k\in\zz}\sum_{i\in\nn_0}\lambda^k_iTa^k_i\r\|_p^p
\leq
\lf\|\sum_{k\in\zz}\sum_{i\in\nn_0}|\lambda^k_i||Ta^k_i|\r\|_p^p
&\le
 \sum_{k\in\zz}\sum_{i\in\nn_0}|\lambda^k_i|^p\lf\|Ta^k_i\r\|_p^p
\\
&\le C'\sum_{k\in\zz}\sum_{i\in\nn_0}|\lambda^k_i|^p\le C' C\|f\|_{H^p(\Theta)}^p.
\end{align*}
The density of $L^q\cap H^p(\Theta)$ in $H^p(\Theta)$, see Lemma \ref{l5.8}, implies that $T$ extends to a bounded linear operator from $H^p(\Theta)$ to $L^p$.
\end{proof}

\medskip

\noindent Marcin Bownik: Department of Mathematics, University of Oregon, Eugene, OR 97403--1222, USA

\medskip

\noindent Baode Li: College of Mathematics and System Science, Xinjiang University, Urumqi, 830046, P. R. China

\medskip

\noindent Jinxia Li (Corresponding author): School of Mathematics and Information Science, Henan Polytechnic University, Jiaozuo, 454003, P. R. China

\medskip

\noindent{\it E-mail address}:

\noindent\texttt{mbownik@uoregon.edu}\quad(Marcin Bownik)

\noindent\texttt{baodeli@xju.edu.cn}\quad(Baode Li)

\noindent\texttt{jinxiali@hpu.edu.cn}\quad(Jinxia Li)


\begin{thebibliography}{30}
\vspace{-0.3cm}
\bibitem{abr} V. Almeida, J. J. Betancor and L. Rodr\'{\i}guez-Mesa, Molecules associated to Hardy spaces with pointwise variable anisotropy. Integr. Equ. Oper. Theory {\bf 89} (2017), 301--313.

\vspace{-0.3cm}
\bibitem{am}
R. Alvarado and M. Mitrea,
Hardy Spaces on Ahlfors-Regular Quasi Metric Spaces.
A Sharp Theory. Lecture Notes in Math. 2142. Springer Press, Cham (2015).


\vspace{-0.3cm}
\bibitem{bb}
B. Barrios and J. J. Betancor, Anisotropic weak Hardy spaces and wavelets. J.
Funct. Spaces Appl. Article ID 809121, {\bf 17} (2012).

\vspace{-0.3cm}
\bibitem{bd} J. J. Betancor and W. Dami\'{a}n, Anisotropic local Hardy spaces. J. Fourier Anal. Appl. {\bf 16} (2010), 658-675.

\vspace{-0.3cm}
\bibitem{b} M. Bownik, Anisotropic Hardy spaces and
wavelets. Mem. Amer. Math. Soc. {\bf 164}, no. 781 (2003), 1--122.

\vspace{-0.3cm}
\bibitem{b2}
M. Bownik, Boundedness of operators on Hardy spaces via atomic decompositions. Proc. Amer. Math. Soc. {\bf 133} (2005), 3535--3542.

\vspace{-0.3cm}
\bibitem{blyz} M. Bownik, B. Li, D. Yang and Y. Zhou,
Weighted anisotropic Hardy spaces and their applications in
boundedness of sublinear operators. Indiana Univ. Math. J. {\bf 57}
(2008), 3065--3100.

\vspace{-0.3cm}
\bibitem{ct75}
A.-P. Calder\'on and A. Torchinsky, Parabolic maximal functions
associated with a distribution. Adv. Math. {\bf 16} (1975), 1--64.

\vspace{-0.3cm}
\bibitem{ct77}
A.-P. Calder\'on and A. Torchinsky, Parabolic maximal functions
associated with a distribution. II. Adv. Math. {\bf 24} (1977), 101--171.

\vspace{-0.3cm}
\bibitem{cw71}  R. R. Coifman and G. Weiss,
Analyse Harmonique Non-commutative sur Certains Espaces Homog\`enes.
Lecture Notes in Math. 242. Springer Press, Berlin (1971).

\vspace{-0.3cm}
\bibitem{cw77}
  R. R. Coifman and G. Weiss,
Extensions of Hardy spaces and their use in analysis.
Bull. Amer. Math. Soc. {\bf 83} (1977), 569--645.


\vspace{-0.3cm}
\bibitem{ddp} W. Dahmen, S. Dekel and P. Petrushev, Two-level-split decomposition of anisotropic Besov spaces. Constr. Approx. {\bf 31} (2010), 149--194.

\vspace{-0.3cm}
\bibitem{dhp}
S. Dekel, Y. Han and P. Petrushev, Anisotropic meshless frames on $\rn$. J. Fourier Anal. Appl. {\bf 15} (2009), 634--662.

\vspace{-0.3cm}
\bibitem{dpw} S. Dekel, P. Petrushev and T. Weissblat, Hardy spaces on $\rn$ with pointwise variable anisotropy. J. Fourier Anal. Appl. {\bf 17} (2011), 1066--1107.

\vspace{-0.3cm}
\bibitem{dw}
S. Dekel and T. Weissblat, On dual spaces of anisotropic Hardy spaces. Math. Nachr. {\bf 285} (2012), 2078--2092.

\vspace{-0.3cm}
\bibitem{fs72}
 C. Fefferman and E. M. Stein, $H^p$ spaces of several variables. Acta Math. {\bf 129} (1972), 137--193.

 \vspace{-0.3cm}
 \bibitem{gf}
J. Garc\'ia-Cuerva and J. L. Rubio de Francia,
Weighted Norm Inequalities and Related Topics. North-Holland Publishing Co., Amsterdam (1985).

\vspace{-0.3cm}
\bibitem{ggkk}
I. Genebashvili, A. Gogatishvili, V. Kokilashvili and M. Krbec, Weight Theory for Integral Transforms on Spaces of Homogeneous Type. Longman Press, Harlow (1998).


\vspace{-0.3cm}
\bibitem{g}
L. Grafakos, Classical Fourier Analysis. Springer Press, New York (2014).

\vspace{-0.3cm}
\bibitem{g1}
L. Grafakos, Modern Fourier Analysis. Springer Press, New York (2014).


\vspace{-0.3cm}
\bibitem{hs}
Y. Han and E. T. Sawyer,
Littlewood-Paley theory on spaces of homogeneous type and the classical function spaces.
Mem. Amer. Math. Soc. {\bf 110} (1994), vi+126 pp.

\vspace{-0.3cm}
\bibitem{hhllyy}
Z. He, Y. Han, J. Li, L. Liu, D. Yang and W. Yuan,
A complete real-variable theory of Hardy spaces on spaces of homogeneous type. J. Fourier Anal. Appl. {\bf 25} (2019), 2197--2267.

\vspace{-0.3cm}
\bibitem{h} G. Hu, Littlewood-Paley characterization of weighted anisotropic Hardy
spaces. Taiwan. J. Math. {\bf 17} (2013), 675--700.

\vspace{-0.3cm}
\bibitem{hlyy} L. Huang, J. Liu, D. Yang and W. Yuan, Real-variable characterization of new anisotropic mixed-norm Hardy spaces. Comm. Pure Appl. Anal. (to appear).

\vspace{-0.3cm}
\bibitem{lbyz}
B. Li, M. Bownik, D. Yang and Y. Zhou,
Anisotropic singular integrals in product spaces.
Sci. China Math. {\bf 53} (2010), 3163--3178.

\vspace{-0.3cm}
\bibitem{lyy} B. Li, D. Yang and W. Yuan, Anisotropic Hardy spaces of
Musielak-Orlicz type with applications to boundedness of sublinear operators. Sci. World J. vol. 2014 (2014), Article ID 306214.

\vspace{-0.3cm}
\bibitem{msv}
S. Meda, P. Sj\"ogren and M. Vallarino,
On the $H^1$-$L^1$ boundedness of operators.
Proc. Amer. Math. Soc. {\bf 136} (2008), 2921--2931.

\vspace{-0.3cm}
\bibitem{mc97} Y. Meyer and R. R. Coifman. Wavelets:
Calder\'on-Zygmund and Multilinear Operators. Cambridge University
Press, Cambridge (1997).

\vspace{-0.3cm}
\bibitem{s} E. M. Stein, Harmonic Analysis: Real-Variable Methods, Orthogonality and Oscillatory Integrals. Princeton University Press, Princeton, NJ (1993).

\vspace{-0.3cm}
\bibitem{sw}
E. M. Stein and G. Weiss,
On the theory of harmonic functions of several variables. I. The theory of $H^p$ spaces.
Acta Math. {\bf 103} (1960), 25--62.

\vspace{-0.3cm}
\bibitem{tw}
M. Taibleson and G. Weiss,
The Molecular Characterization of Certain Hardy Spaces. Representation theorems for Hardy spaces, pp. 67--149,
Ast\'erisque, 77, Soc. Math. France, Paris (1980).

\vspace{-0.3cm}
\bibitem{w} L.-A. Wang, A multiplier theorem on anisotropic Hardy spaces. Canad. Math. Bull. {\bf 61} (2018), 390--404.

\vspace{-0.3cm}
\bibitem{zl} K. Zhao and L.-L. Li, Molecular decomposition of weighted anisotropic Hardy
spaces. Taiwan. J. Math. {\bf 17} (2013), 583--599.

\end{thebibliography}
\end{document}